\documentclass[12pt,centertags,oneside]{amsart}
\usepackage{amsmath,amstext,amsthm,amscd,typearea}
\usepackage{amssymb}
\usepackage{fancyhdr}
\usepackage[mathscr]{eucal}
\usepackage{mathrsfs}
\usepackage{concmath}
\usepackage{charter} 
\usepackage{dsfont}          
\usepackage{hyperref}       
\usepackage{color}      
\usepackage{titletoc}

\usepackage[a4paper,width=16.2cm,top=2.8cm,bottom=2.8cm]{geometry} 
  
\numberwithin{equation}{section}     
\pagestyle{plain}  
\newcommand{\field}[1]{\mathbb{#1}} 
\newcommand{\Z}{\field{Z}}
\newcommand{\R}{\field{R}}   
\newcommand{\C}{\field{C}} 
\newcommand{\N}{\field{N}}

  
\def\cC{\mathscr{C}}

\def\cL{\mathscr{L}}


\def\mL{\mathcal{L}}

\def\mT{\mathcal{T}}




\newcommand{\til}[1]{\widetilde{#1}}

\newcommand{\cali}[1]{\mathscr{#1}}

\newcommand{\cE}{\cali{E}} 

\DeclareMathOperator{\Ker}{Ker}

\DeclareMathOperator{\Dom}{Dom}


\newcommand{\abs}[1]{\lvert#1\rvert}

\newcommand{\ol}{\overline}
\newcommand{\ddbar}{\overline\partial}
\newcommand{\dbar}{\partial}
\newtheorem{thm}{Theorem}[section]
\newtheorem{lemma}[thm]{Lemma}
\newtheorem{prop}[thm]{Proposition}
\newtheorem{cor}[thm]{Corollary}
\theoremstyle{definition}
\newtheorem{rem}[thm]{Remark}
\theoremstyle{definition}
\newtheorem{defn}[thm]{Definition}

\newtheorem{exam}[thm]{Example}

\newtheorem{ass}[thm]{Assumption}

\newcommand{\be}{\begin{eqnarray}}
\newcommand{\ee}{\end{eqnarray}}
\newcommand{\ov}{\overline}
\newcommand{\ovz}{\overline{z}}

\newcommand{\oh}{\widehat}

\newcommand{\comment}[1]{}
\hyphenation{re-de-rived}\hyphenation{co-ho-mo-lo-gy}
\hyphenation{in-te-res-ting}


\begin{document}        
\title 
{Szeg\H{o} kernel asymptotics on some non-compact complete CR manifolds}   
\author{Chin-Yu Hsiao, George Marinescu,
Huan Wang}  
 
\address{Chin-Yu Hsiao, Institute of Mathematics, Academia Sinica, Taipei, Taiwan}
\email{chsiao@math.sinica.edu.tw} 

\address{George Marinescu, Universit{\"a}t zu K{\"o}ln,  Mathematisches Institut, Weyertal 86-90,   50931 K{\"o}ln, Germany}
\email {gmarines@math.uni-koeln.de}  

\address{Huan Wang, Institute of Mathematics, Academia Sinica, Taipei, Taiwan}     
\email {huanwang2016@hotmail.com}  
  
\keywords{CR manifolds, Kohn Laplacian, Szeg\H{o} kernel, Bochner-Kodaira-Nakano formula}   
\date{21. Dec. 2020}     
               
\begin{abstract}    
We establish Szeg\H{o} kernel asymptotic expansions on 
non-compact strictly pseudoconvex complete CR manifolds 
with transversal CR $\R$-action under  certain natural geometric conditions.
\end{abstract}       
\maketitle     
\tableofcontents           
  
\section{Introduction}   
Let $(X, T^{1,0}X)$ be a CR manifold of dimension $2n+1$, $n\geq1$.  
The orthogonal projection $S^{(q)}:L^2_{0,q}(X)\rightarrow 
{\rm Ker\,}\Box^{(q)}_b$ onto ${\rm Ker\,}\Box^{(q)}_b$
is called the Szeg\H{o} projection, while its distribution kernel 
$S^{(q)}(x,y)$ is called the Szeg\H{o} kernel, where 
$\Box^{(q)}_b$ denote the Kohn Laplacian acting on $(0,q)$ forms.
The study of the Szeg\H{o} kernel is a classical subject in several complex 
variables and CR geometry.
When $X$ is strictly pseudoconvex, compact and $\Box^{(0)}_b$ has closed range,  
Boutet de Monvel-Sj\"ostrand~\cite{BS76} showed that $S^{(0)}(x,y)$
is a complex Fourier integral operator. The Boutet de Monvel-Sj\"ostrand 
description of the Szeg\H{o} kernel had a profound impact
in several complex variables, 
symplectic and contact geometry, geometric quantization and  K\"ahler geometry. 
These ideas also partly motivated the introduction of the recent direct 
approaches and their various extensions, 
see \cite{MM, MM08}.

However, almost all the results on Szeg\H{o} kernel assumed 
that $X$ is compact, while for non-compact complex manifolds
the Bergman kernel asymptotics was  
comprehensively studied \cite{MM,MM08,MM15}. 
Therefore, it is interesting to look for counterparts for the Szeg\H{o} kernel 
on non-compact CR manifolds. 
Let us see some simple examples and describe our motivation briefly. 
On $\mathbb C^n$, consider the hypersurface
\[Y:=\{z=(z_1,\ldots,z_n)\in\mathbb C^n;\, {\rm Im\,}z_n=f(z_1,\ldots,z_{n-1})\},\]
where $f\in\cC^\infty(\mathbb C^{n-1},\mathbb R)$. 
 Then, $Y$ is a non-compact CR manifold. There are many smooth CR functions on $Y$. 
 Even in this simple example, we do not know the behavior 
 of the associated Szeg\H{o} kernel. Let us see another example. 
 Consider $\field{H}=\C^n\times \R$ with  CR structure 
\be
T^{1,0}\field{H}:={\rm span\,}\left\{\frac{\dbar}{\dbar z_j}+
i\frac{\dbar\phi}{\dbar z_j}(z)\frac{\dbar}{\dbar x_{2n+1}}\right\}_{j=1}^n,
\ee
where $\phi\in\cC^\infty(\C^n,\R)$. Then, $\field{H}$ is also 
a non-compact CR manifold and the Szeg\H{o} kernel has been studied when 
$\phi$ is quadratic (see~\cite{HHL18}) but for general $\phi$, there are fewer results. 
Both $Y$ and $\field{H}$ are non-compact CR manifolds with transversal 
CR $\mathbb R$-action. Therefore, we think that the study of the Szeg\H{o} kernels 
on non-compact CR manifolds with transversal CR $\mathbb R$-action is a 
very natural and interesting question. 
	
In~\cite{Hsi:10}, the first author obtained the Szeg\H{o} kernel 
asymptotic expansion on the non-degenerate part of the Levi form 
under the assumption that Kohn Laplacian has closed range. 
It should be mentioned that the method in~\cite{Hsi:10} works well 
for non-compact setting. But for general non-compact CR manifolds, 
closed range property is not a natural assumption since in the Heisenberg case 
mentioned above,  even for $\phi$ quadratic, $\Box^{(0)}_b$ 
does not have closed range but we still have Szeg\H{o} kernel asymptotic expansion. 
In this paper, we show that  $\Box^{(0)}_b$ has local closed range with respect to 
some operator $Q_\lambda$ (see Definition~\ref{d-gue201114yydf}) 
under certain geometric conditions. Furthermore, combining
this local closed range property and by more detailed analysis, 
we establish Szeg\H{o} kernel asymptotic expansions on non-compact 
strictly pseudoconvex complete CR manifolds with transversal CR $\R$-action 
under  certain natural geometric conditions.
To have local closed range property, we established the CR Bochner-Kodaira-Nakano 
formula analogue to \cite{Dem:85}, see Theorem \ref{t-gue201028yyd}, 
which has its own interest. {This is also a refinement of Tanaka's basic identities \cite[Theorems 5.1, 5.2]{Tan75} in our context.} We remark that the results in this paper hold 
both for transversal CR $\R$-action and $S^1$-action.

 
We  now formulate our main result.  
We will work in the following setting.
\begin{ass}\label{as1}
$(X,HX,J,\omega_0)$ is an orientable strictly pseudoconvex
CR manifold of dimension $2n+1$, $n\geq 1$, 
where $HX$ is the Levi distribution, $J$ is the complex structure
and $\omega_0$ is a contact form, endowed with
a smooth transversal CR $\R$-action on $X$ 
preserving $\omega_0$ and $J$.  
\end{ass} 
We denote by $T^{1,0}X$ and $T^{0,1}X$ the bundles of tangent
vectors of type $(1,0)$ and $(0,1)$, respectively.
Since there is a transversal and 
CR $\mathbb R$-action (see Definition~\ref{d-gue201128yyd}).
we have a decomposition $\C TX=T^{1,0}X\oplus T^{0,1}X\oplus\C T$,
where $T$ is the infinitesimal generator of the $\R$-action.
We can and will choose the $\R$-invariant contact form $\omega_0$ such that
\begin{equation}\label{e-gue201128yyd1}
{\omega_0(T)=1},
\end{equation}
that is, $T$ is also the characteristic vector field of the CR manifold $X$.
{Since $X$ is strongly pseudoconvex, we can take $T$ and $\omega_0$ so that $\frac{1}{2i}d\omega_0|_{T^{1,0}X}$ is positive definite.}
There exists an $\R$-invariant Hermitian metric $g=g_X$ on $\C TX$ 
such that 
\begin{equation}\label{e-gue201128yyd}
T^{1,0}X\perp T^{0,1}X,\:\:T\perp(T^{1,0}X\oplus T^{0,1}X),\:\:
\langle\,T\,|\,T\,\rangle_g=1. 
 \end{equation}
Given such a metric we will denote by $\Theta_X$
its fundamental $(1,1)$-form given by
$\Theta_X(a,\ov b)=\sqrt{-1}\langle a, b\rangle_g$ for $a,b\in T^{1,0}X$.

 
Denote by $\cL$ the Levi form with respect to $\omega_0$ (see \eqref{e-201120levi}). 
Since $X$ is strictly pseudoconvex, $\cL$ induces a Hermitian metric, called 
the Levi (or Webster) metric, 
\begin{equation}\label{eq:LeviMetric}
{g_{\cL}=d\omega_0(\cdot,J\cdot)+\omega_0(\cdot)\omega_0(\cdot),}
\end{equation}  
on $TX$ and by extension on $\C TX$
with the properties \eqref{e-gue201128yyd}.
We shall also denote the
inner product given by this metric by
$\langle\,\cdot\,|\,\cdot\,\rangle_{\cL}$ on $\mathbb CTX$ 
so that $\langle\,u\,|\,v\,\rangle_{\cL}=2\cL(u,\ol v)$, $u, v\in T^{1,0}X$ 
and \eqref{e-gue201128yyd} hold. 
Let $K^*_X:=\det(T^{1,0}X)$ and let $R^{K^*_X}_{\cL}$ be the curvature of 
$K^*_X$ induced by $\langle\,\cdot\,|\,\cdot\,\rangle_{\cL}$ 
(see \eqref{e-gue201025yyd} and \eqref{e-201120curc}). 
 
 Let $(\,\cdot\,|\,\cdot\,)$ be the $L^2$ inner product on $\Omega^{0,q}_c(X)$ 
 induced by $g_X$ and let $L^2_{0,q}(X)$ be the completion 
 of $\Omega^{0,q}_c(X)$ with respect to 
 $(\,\cdot\,|\,\cdot\,)$. We write $L^2(X):=L^2_{0,0}(X)$. Let 
 \[S^{(0)}: L^2(X)\rightarrow{\rm Ker\,}\ddbar_b\subset L^2(X)\]
 be the orthogonal projection with respect to $(\,\cdot\,|\,\cdot\,)$ 
 and let $S^{(0)}(x,y)\in\mathscr D'(X\times X)$ be the distribution kernel of 
 $S^{(0)}$, where $\ddbar_b$ denotes the tangential Cauchy-Riemann operator 
 (see Definition~\ref{d-gue201204yyd}). Let $D\subset X$ be an open set and let 
 $A, B: \cC^\infty_c(D)\rightarrow\mathscr D'(X)$ be continuous operators. 
 We write $A\equiv B$ on $D$ if $A-B$ is a smoothing operator on $D$. 
The main result of this article is as follows.
    
\begin{thm}\label{thm-main}
Let $(X,T^{1,0}X)$ be a strictly pseudoconvex CR manifold of dimension $2n+1$, 
$n\geq 1$, with a transversal CR $\R$-action on $X$. 
Assume that the Levi metric $g_\cL$ is complete
and there is $C>0$ such that 
\be	\label{e-cond}
\sqrt{-1}R^{K^*_X}_{\cL}\geq-2C\sqrt{-1}\cL\,,\quad 
{(2\sqrt{-1}\cL)^n\wedge\omega_0\geq C\Theta_X^n\wedge\omega_0.}
\ee
	Let $D\Subset X$ be a local coordinate patch with local coordinates $x=(x_1,\ldots,x_{2n+1})$. 
	Then, 
	\begin{equation}\label{e-gue201204yyda}
		S^{(0)}(x,y)\equiv\int^\infty_0e^{i\varphi(x,y)t}s(x,y,t)dt\ \ \mbox{on $D$},
	\end{equation}
	where $\varphi\in\cC^\infty(D\times D)$, 
	\begin{equation}\label{e-gue201204yydb}
		\begin{split}
			&\varphi\in \cC^\infty(D\times D),\ \ {\rm Im\,}\varphi(x, y)\geq0,\\
			&\varphi(x, x)=0,\ \ \varphi(x, y)\neq0\ \ \mbox{if}\ \ x\neq y,\\
			&d_x\varphi(x, y)\big|_{x=y}=\omega_0(x), \ \ d_y\varphi(x, y)\big|_{x=y}=-\omega_0(x), \\
			&\varphi(x, y)=-\ol\varphi(y, x),
		\end{split}
	\end{equation}
	$s(x, y, t)\in S^{n}_{{\rm cl\,}}\big(D\times D\times\mathbb{R}_+\big)$
	and the leading term $s_0(x,y)$ of the expansion \eqref{e-gue201114yydII} 
	of $s(x,y,t)$ satisfies
	\begin{equation}
		s_0(x, x)=\frac{1}{2}\pi^{-n-1}
		\abs{{\rm det\,}{\cL}_x},\ \ \mbox{for all $x\in D$}.
	\end{equation}
\end{thm}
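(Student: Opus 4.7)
The plan is to combine the transversal CR $\R$-action with the CR Bochner-Kodaira-Nakano formula (Theorem \ref{t-gue201028yyd}) to produce a local closed range property for $\Box^{(0)}_b$ with respect to the operator $Q_\lambda$ of Definition \ref{d-gue201114yydf}, and then to build the complex Fourier integral representation of $S^{(0)}$ by a microlocal Hodge decomposition in the spirit of \cite{Hsi:10}. Since $T$ generates the $\R$-action by CR automorphisms preserving $\omega_0$ and $g_\mathcal{L}$, the operator $-\sqrt{-1}T$ is essentially self-adjoint on $L^2(X)$ and commutes with $\ddbar_b$ and $\Box^{(0)}_b$. Its spectral decomposition yields the cutoff $Q_\lambda$ that projects onto the high $T$-frequency part, and one can work separately on $Q_\lambda L^2$ and its complement.

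Next, I would prove the coercive estimate underlying local closed range. Apply the CR BKN formula to a $(0,1)$-form $u$ supported in a fixed open set: it expresses $\|\ddbar_b u\|^2+\|\ddbar_b^{\,*}u\|^2$ as a sum of a nonnegative gradient-like term, a curvature term involving $R^{K^*_X}_\mathcal{L}$, and a Levi-form term proportional to $-\sqrt{-1}(Tu\,|\,u)_\mathcal{L}$. On $Q_\lambda L^2_{0,1}$ the last term becomes a multiplication bounded below by $\lambda\cdot 2\mathcal{L}$, so the hypothesis $\sqrt{-1}R^{K^*_X}_\mathcal{L}\geq -2C\sqrt{-1}\mathcal{L}$ produces a net positive lower bound $(2\lambda-2C)\mathcal{L}$ once $\lambda>C$. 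The completeness of $g_\mathcal{L}$ enters in the standard Andreotti-Vesentini/Demailly density step: it allows approximation by compactly supported forms through cutoffs with uniformly bounded differential, extending the estimate to the maximal domain. This gives the $L^2$ existence theorem for $\ddbar_b$ on the high-frequency part, hence local closed range with respect to $Q_\lambda$.

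Given this, the microlocal construction proceeds as in \cite{Hsi:10}. On $Q_\lambda L^2(X)$, the Szeg\H{o} projection is realized modulo smoothing by a complex Fourier integral operator $\int_0^\infty e^{i\varphi(x,y)t}s(x,y,t)\,dt$ on $D$. The phase $\varphi$ is determined by the eikonal equation $\ddbar_{b,x}(e^{i\varphi t})=O(t^{-\infty})$ together with the initial conditions in \eqref{e-gue201204yydb}; the positivity of $\operatorname{Im}\varphi$ off the diagonal reflects the strict pseudoconvexity of $X$. The symbol $s\in S^n_{\mathrm{cl}}$ is then built by solving transport equations so that $\Box^{(0)}_b(e^{i\varphi t}s)\sim0$. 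The normalization $s_0(x,x)=\tfrac12\pi^{-n-1}|\det\mathcal{L}_x|$ is pinned down by matching the leading term against the Heisenberg model kernel, where the second condition in \eqref{e-cond} enforces that volumes with respect to $g_X$ and the Levi form are comparable so the model computation applies uniformly on $D$.

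Finally, I would show that $(I-Q_\lambda)S^{(0)}\equiv 0$ on $D$ in the smoothing sense. Since $(I-Q_\lambda)$ restricts to $T$-frequencies below $\lambda$, and $T$ is transversal to $HX$, elements in its range are smooth in the $T$-direction; combining this with the subellipticity of $\Box^{(0)}_b$ in the Levi directions yields full regularity of the Schwartz kernel on $D$. The main obstacle is the second step: on a non-compact CR manifold, global closed range fails (as in the Heisenberg example with $\phi$ quadratic), so one must extract truly quantitative coercivity from the CR BKN formula, propagate it through the spectral cutoff $Q_\lambda$ with estimates uniform in $\lambda$, and be careful that the approximation arguments enabled by completeness of $g_\mathcal{L}$ respect the spectral decomposition. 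This interplay between local $L^2$ analysis and the microlocal construction is what the subsequent sections will have to realize.
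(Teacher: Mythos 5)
Your overall strategy is the one the paper follows: spectral calculus for $\sqrt{-1}T$ giving the projectors $Q_\lambda$ commuting with $\ddbar_b$; the CR Bochner--Kodaira--Nakano estimate plus the curvature hypothesis and an Andreotti--Vesentini density argument (using completeness of $g_\cL$) to get an $L^2$ existence theorem for $\ddbar_b$ on the spectral subspace, hence local $L^2$ closed range of $\Box^{(0)}_b$ with respect to $Q_\lambda$; then the microlocal Hodge decomposition of \cite{Hsi:10,HM16} to represent $Q^*S^{(0)}Q$ as a complex FIO and a final step showing the complementary piece is smoothing. So the architecture is right. However, several concrete points in your sketch would not go through as written.

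First, you misplace the role of the hypothesis $(2\sqrt{-1}\cL)^n\wedge\omega_0\geq C\,\Theta_X^n\wedge\omega_0$. It is not used to normalize the leading symbol against the Heisenberg model (that computation comes for free from \cite[Theorem 1.9]{HM16} once local closed range is known, and is carried out in the metric $g_X$). It is used inside the local closed range proof itself: the BKN estimate and the resulting solution $g$ of $\ddbar_b g=\ddbar_b Q_\lambda u$ live in $L^2(X,\cL)$, and one needs $\|g\|^2\leq C\|g\|^2_{\cL}$ (a comparison of volume forms for $(0,0)$-forms) to conclude $\|(I-S^{(0)})Q_\lambda u\|^2\leq C C_0\|\ddbar_b u\|^2_{\cL}$ in the $g_X$-norm; see the proof of Theorem \ref{t-gue201113yydp}. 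Second, your $Q_\lambda$ is a sharp spectral cutoff, which is not a pseudodifferential operator with symbol in $S^0_{1,0}$; the microlocal machinery of Theorem \ref{t-gue201115yyds} requires the smoothed cutoff $Q_{\tau_\lambda}$, and one passes the closed range estimate from $Q_\lambda$ to $Q_{\tau_\lambda}$ by monotonicity. Third, and most substantively, you treat $Q_\lambda$ as if it were always locally a convolution in the BRT variable $x_{2n+1}$. That is true only when the $\R$-action is free; by \eqref{e-gue201109yyd} the action may instead come from a torus action, in which case $Q_{\tau_\lambda}=\hat Q_{\tau_\lambda}+\hat R_{\tau_\lambda}$ with a nonlocal remainder, and one must prove $\tilde S\hat R_{\tau_\lambda}\equiv0$ (Lemma \ref{l-gue201116yyds}) -- this is a genuine piece of work your sketch omits. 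Finally, your last step as stated ("elements in the range of $I-Q_\lambda$ are smooth in the $T$-direction") is not correct -- that range still contains arbitrarily high $T$-frequencies of the opposite sign; the correct mechanism is that ${\rm WF}(I-\hat Q_{\tau^2_\lambda})$ is disjoint from $\Sigma^-$ while ${\rm WF}'(\tilde S)$ sits over $\Sigma^-\times\Sigma^-$, and even then one only obtains that $(I-Q_{\tau^2_\lambda})S^{(0)}$ maps $L^2$ into $\cC^\infty(D)$, so the smoothing statement must be sandwiched as $(I-Q_{\tau^2_\lambda})S^{(0)}(I-Q_{\tau^2_\lambda})\equiv0$ and the expansion recovered from $S^{(0)}\equiv 2Q_{\tau^2_\lambda}S^{(0)}-Q_{\tau^4_\lambda}S^{(0)}$ on $D$, as in Theorems \ref{t-gue201118yyda} and \ref{t-gue201118yydp}.
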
  

We refer the reader to Definition~\ref{d-gue201114yyd} for the meaning of the space $S^{n}_{{\rm cl\,}}\big(D\times D\times\mathbb{R}_+\big)$. We also refer the reader to~\cite[Theorems 3.3, 4.4]{HM16} for more properties for the phase $\varphi$ in \eqref{e-gue201204yyda}.

We now apply our main result to complex manifolds. 
Let us recall some notations and terminology. Let $(L,h^L)$ 
be a holomorphic line bundle over a Hermitian manifold $(M,\Theta_M)$, 
where $h^L$ denotes a Hermitian metric on $L$ and $\Theta_M$ 
is a positive $(1,1)$ form on $M$. 
For every $k\in\mathbb N$, let $(L^k,h^{L^k})$ be the $k$-th power of $(L,h^L)$.
The positive $(1,1)$ form $\Theta_M$ and $h^{L^k}$  induces 
a $L^2$ inner product $(\,\cdot\,|\,\cdot\,)_{\Theta_M}$ on 
$\Omega^{0,q}_c(M,L^k))$. Let $L^2_{0,q}(M,L^k)$ be the completion 
of $\Omega^{0,q}_c(M,L^k)$ with respect to $(\,\cdot\,|\,\cdot\,)_{\Theta_M}$. We write $L^2(M,L^k):=L^2_{0,0}(M,L^k)$. Let 
\[H^0_{(2)}(M,L^k)={\rm Ker\,}\ddbar_k:=\{u\in L^2(M,L^k);\, \ddbar u=0\},\]
be the space of holomorphic square integrable sections of $L^k$. 
Let $\{f_j\}_{j=1}^{{d_k}}$ be an orthonormal basis for $H^0_{(2)}(M,L^k)$ 
with respect to $(\,\cdot\,|\,\cdot\,)_{\Theta_M}$, 
where $d_k\in\mathbb N\bigcup\{\infty\}$. 
Let $s$ be a local holomorphic trivializing section of $L$ 
defined on an open set $D\Subset M$, $|s|^2_{h^L}=e^{-2\phi}$, 
$\phi\in\cC^\infty(D,\mathbb R)$. 
On $D$, we write $f_j:=s^k\otimes\tilde f_j$, $\tilde f_j\in\cC^\infty(D)$, 
$j=1,\ldots,d_k$. The localized Bergman kernel on $D$ is given by 
\begin{equation}\label{e-gue201205ycd}
P_{k,s}(x,y):=\sum^{d_k}_{j=1}e^{-k\phi(x)}
\tilde f_j(x)\overline{\tilde f_j(y)}e^{-k\phi(y)}\in\cC^\infty(D\times D). 
\end{equation} 

Let $R^L$ be the Chern curvature of $L$ induced by $h^L$. 
Assume that $\omega=\sqrt{-1}R^L$ is positive. 
Let $K^*_M:=\det(T^{1,0}M)$ and let $R^{K^*_M}_{\omega}$ 
be the curvature of $K^*_M$ induced by $\omega$. 
Applying Theorem~\ref{thm-main} to the circle bundle of $(L,h^L)$, we get: 

\begin{thm}\label{t-gue201205yyd}
Let $(L,h^L)$ be a Hermitian holomorphic line bundle over a Hermitian manifold 
$(M,\Theta_M)$. 
We assume that $\omega=\sqrt{-1}R^L$ is positive and defines a 
complete K\"ahler metric on $M$.
We assume moreover that and there is $C>0$ 
such that 
\begin{equation}\label{e-gue201205yydr}
\sqrt{-1}R^{K^*_M}_{\omega}\geq-C\omega,\quad 
\omega^n\geq C\Theta_M^n \quad\text{on $M$}.
\end{equation}
Let $s$ be a local holomorphic trivializing section of $L$ defined on 
an open set $D\Subset M$. Then, 
\begin{equation}\label{e-gue201205yyds}
P_{k,s}(x,y)\equiv e^{ik\Phi(x,y)}b(x,y,k)\mod O(k^{-\infty})\ \ \mbox{on $D$}, 
\end{equation}
where $\Phi\in \cC^\infty(D\times D)$, ${\rm Im\,}\Phi(x, y)\geq C|x-y|^2$, 
$C>0$, $\Phi(x,x)=0$, for every $x\in D$, 
\begin{equation}\label{e-gue201205yydw}
b(x,y,k)\in S^n_{{\rm loc\,}}(1;D\times D),\ \ 
b(x,y,k)\sim\sum^{\infty}_{j=0}k^{n-j}b_j(x,y)\ \ 
\mbox{in $S^n_{{\rm loc\,}}(1;D\times D)$},
\end{equation}
$b_j(x,y)\in\cC^\infty(D\times D)$, $j=0,1,\ldots$\,, 
\[b_0(x,x)=(2\pi)^{-n}\frac{\omega^n(x)}{\Theta^n_M(x)},\ \ 
\mbox{for every $x\in D$}.\]
\end{thm}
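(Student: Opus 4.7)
The plan is to realize $(M,L,h^L)$ as the base of the CR circle bundle of $L^*$ and apply Theorem~\ref{thm-main} there, then recover $P_{k,s}(x,y)$ as the $k$-th Fourier mode of the Szeg\H{o} kernel via a stationary phase argument. Let $(L^*,h^{L^*})$ be the dual bundle with dual metric and set $X := \{v\in L^* : |v|_{h^{L^*}}=1\}$ with projection $\pi\colon X\to M$. The complex structure on $L^*$ induces a CR structure $T^{1,0}X$ of hypersurface type, the fiberwise $S^1$-action lifts to a transversal CR $\R$-action with infinitesimal generator $T$, and the connection $1$-form of $h^L$ restricts to an $\R$-invariant contact form $\omega_0$ on $X$ satisfying $\omega_0(T)=1$ and $d\omega_0=\pi^*\omega$, where $\omega=\sqrt{-1}R^L$. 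Thus Assumption~\ref{as1} holds. Under the natural identifications $T^{1,0}X\cong\pi^*T^{1,0}M$ and $K^*_X\cong\pi^*K^*_M$, together with $2\sqrt{-1}\cL=\pi^*\omega$ and a compatible choice of $\Theta_X$, the two conditions \eqref{e-gue201205yydr} on $M$ translate directly into \eqref{e-cond} on $X$; completeness of $g_\cL$ follows from completeness of the K\"ahler metric on $M$ and compactness of the circle fibers. Theorem~\ref{thm-main} therefore applies and yields the representation \eqref{e-gue201204yyda} on any relatively compact chart of $X$.

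The $\R$-action factors through an $S^1$-action, so $L^2(X)$ decomposes as a Hilbert direct sum $\bigoplus_{k\in\Z}L^2_k(X)$ of isotypic components with $u(e^{i\theta}\cdot v)=e^{ik\theta}u(v)$ for $u\in L^2_k(X)$. The standard isomorphism $L^2_k(X)\cong L^2(M,L^k)$ intertwines $\ddbar_b|_{L^2_k}$ with $\ddbar_k$, and the trivializing section $s$ yields a canonical lift $\wi{\,\cdot\,}\colon D\to X$ by $\wi x:=e^{-\phi(x)}s^*(x)$, so that
\begin{equation}
P_{k,s}(x,y) = \frac{1}{2\pi}\int_0^{2\pi} e^{ik\theta}\,S^{(0)}\bigl(\wi x,e^{-i\theta}\wi y\bigr)\,d\theta,\qquad x,y\in D.
\end{equation}
Inserting \eqref{e-gue201204yyda} and rescaling $t=k\sigma$ produces a semiclassical oscillatory integral in $(\sigma,\theta)$ with large parameter $k$ and complex phase $\psi(x,y;\sigma,\theta)=\sigma\,\varphi(\wi x,e^{-i\theta}\wi y)+\theta$.

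The identities $\omega_0(T)=1$ and $d_y\varphi|_{x=y}=-\omega_0$ imply that, for $(x,y)$ near the diagonal, $\psi$ has a unique critical point in $(\sigma,\theta)$ near $(1,0)$, lying in the interior of $\R_+\times[0,2\pi)$; nondegeneracy of the complex Hessian at this point follows from strict positivity of $\cL$. Complex stationary phase of Melin--Sj\"ostrand type then produces \eqref{e-gue201205yyds} with $\Phi(x,y)$ equal to the critical value of $\psi$ and $b(x,y,k)\in S^n_{\rm loc}(1;D\times D)$ a classical symbol of order $n$, while $\Im\Phi(x,y)\geq C|x-y|^2$ follows from $\Im\varphi\geq0$ and the positivity of the transverse Hessian. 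Combining the leading coefficient $s_0(\wi x,\wi x)=\tfrac{1}{2}\pi^{-n-1}|\det\cL_{\wi x}|$ with the Hessian determinant of $\psi$ at the critical point and the identity $2\sqrt{-1}\cL=\pi^*\omega$ yields $b_0(x,x)=(2\pi)^{-n}\omega^n(x)/\Theta_M^n(x)$.

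The principal obstacle is carrying out the stationary phase step rigorously: the amplitude $s(x,y,t)$ together with the complex phase $\varphi$ requires a Melin--Sj\"ostrand type argument with uniform remainders in $k$; the boundary at $t=0$ of the $t$-integration must be handled by a cutoff whose error is absorbed into $O(k^{-\infty})$ by non-stationary phase away from the critical point; and the normalization constants must be tracked precisely to produce the claimed leading coefficient $(2\pi)^{-n}\omega^n/\Theta_M^n$.
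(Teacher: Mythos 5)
Your proposal takes essentially the same route as the paper, which offers no proof of Theorem~\ref{t-gue201205yyd} beyond the statement that it follows by applying Theorem~\ref{thm-main} to the circle bundle of $(L,h^L)$ --- the circle-bundle geometry is set up in Example~\ref{ex:GT}, and the identification of $L^2_{0,q}(M,L^m)$ with the isotypic component $\cE^{(q)}(-m,\sqrt{-1}\partial_\theta)$ is the one used in the Andreotti--Vesentini corollary --- so your verification of \eqref{e-cond} from \eqref{e-gue201205yydr} via $2\sqrt{-1}\cL=\pi^*\omega$, $K^*_X\cong\pi^*K^*_M$, and the Melin--Sj\"ostrand stationary-phase extraction of the $k$-th Fourier mode is precisely the intended (and standard) argument. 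One minor slip: with the exponent $e^{ik\theta}$ as written in your Fourier-mode formula one gets $\partial_\theta\psi\big|_{x=y,\,\theta=0}=\sigma\,\omega_0(T)+1=\sigma+1$, which places the critical point at $\sigma=-1\notin\R_+$; the exponent should be $e^{-ik\theta}$ (equivalently, rotate $\tilde x$ instead of $\tilde y$) so as to match your correct subsequent claim that the critical point lies near $(\sigma,\theta)=(1,0)$.
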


We refer the reader to~\cite[Section 3.3]{HM14} for the precise 
meanings of $A\equiv B\mod O(k^{-\infty})$ on $D$, 
$S^n_{{\rm loc\,}}(1;D\times D)$ and 
the asymptotic sum in \eqref{e-gue201205yydw}. 

\begin{rem}
Our conditions \eqref{e-gue201205yydr} are different from~\cite[Theorem 6.1.1]{MM}. In particular, we do not need that the uniform bounded condition for $\dbar\Theta_M$. The reason is that we can get asymptotic expansion under local closed range condition instead of standard closed range or spectral gap condition. 
\end{rem}

\section{Preliminaries}\label{sec_pre} 
We use the following notations through this article: $\mathbb N=\{1,2,\ldots\}$ is the set of natural numbers, $\mathbb N_0=\mathbb N\bigcup\{0\}$, $\mathbb R$ is the set of 
real numbers, $\overline{\mathbb R}_+=\{x\in\mathbb R;\, x\geq0\}$. 
For $m\in \N$, let $x=(x_1,\ldots,x_m)$ be coordinates of $\R^m$. For $n\in \N$, 
let $z=(z_1,\ldots,z_n)$, $z_j=x_{2j-1}+\sqrt{-1}x_{2j}$, $j=1,\ldots,n$, be coordinates of $\C^n$. We write
\be
\frac{\dbar}{\dbar z_j}:=\frac{1}{2}\left(\frac{\dbar}{\dbar x_{2j-1}}-\sqrt{-1}\frac{\dbar}{\dbar x_{2j}}\right), \quad \frac{\dbar}{\dbar \ov z_j}:=\frac{1}{2}\left(\frac{\dbar}{\dbar x_{2j-1}}+\sqrt{-1}\frac{\dbar}{\dbar x_{2j}}\right),
\ee
\be
dz_j=dx_{2j-1}+\sqrt{-1}dx_{2j}, \quad d\ov z_j=dx_{2j-1}-\sqrt{-1}dx_{2j}. 
\ee  
 
Let $X$ be a $\cC^\infty$ paracompact manifold.
We let $TX$ and $T^*X$ denote the tangent bundle of $X$
and the cotangent bundle of $X$ respectively.
The complexified tangent bundle of $X$ and the complexified cotangent bundle of $X$ are be denoted by $\mathbb CTX$
and $\mathbb C T^*X$, respectively. Write $\langle\,\cdot\,,\cdot\,\rangle$ to denote the pointwise
duality between $TX$ and $T^*X$.
We extend $\langle\,\cdot\,,\cdot\,\rangle$ bilinearly to $\mathbb CTX\times\mathbb C T^*X$.

Let $D\subset X$ be an open set . The spaces of distributions of $D$ and
smooth functions of $D$ will be denoted by $\mathscr D'(D)$ and $\cC^\infty(D)$ respectively.
Let $\mathscr E'(D)$ be the subspace of $\mathscr D'(D)$ whose elements have compact support in $D$. 
Let $\cC^\infty_c(D)$ be the subspace of $\cC^\infty(D)$ whose elements have compact support in $D$.
Let $A: \cC^\infty_c(D)\rightarrow \mathscr D'(D)$ be a continuous map. We write $A(x, y)$ to denote the distribution kernel of $A$.
In this work, we will identify $A$ with $A(x,y)$. 
The following two statements are equivalent
\begin{enumerate}
\item $A$ is continuous: $\mathscr E'(D)\rightarrow \cC^\infty(D)$,
\item $A(x,y)\in\cC^\infty(D\times D)$.
\end{enumerate}
If $A$ satisfies (a) or (b), we say that $A$ is smoothing on $D$. Let
$A,B: \cC^\infty_c(D)\rightarrow\mathscr D'(D)$ be continuous operators.
We write 
\begin{equation} \label{e-gue201114yyd}
\mbox{$A\equiv B$ (on $D$)} 
\end{equation}
if $A-B$ is a smoothing operator. We say that $A$ is properly supported if the restrictions of the two projections 
$(x,y)\rightarrow x$, $(x,y)\rightarrow y$ to ${\rm Supp\,}K_A$
are proper. 

For $m\in\mathbb R$, let $H^m(D)$ denote the Sobolev space
of order $m$ on $D$. Put
\begin{gather*}
H^m_{\rm loc\,}(D)=\big\{u\in\mathscr D'(D);\, \varphi u\in H^m(D),
      \, \forall\varphi\in \cC^\infty_c(D)\big\}\,,\\
      H^m_{\rm comp\,}(D)=H^m_{\rm loc}(D)\cap\mathscr E'(D)\,.
\end{gather*}

Let $D$ be an open coordinate patch of $X$ with local coordinates $x$. We recall the following H\"ormander symbol space

\begin{defn}\label{d-gue201114yyd}
For $m\in\mathbb R$, $S^m_{1,0}(D\times D\times\mathbb{R}_+)$ 
is the space of all $a(x,y,t)\in\cC^\infty(D\times D\times\mathbb{R}_+)$ 
such that for all compact $K\Subset D\times D$ and all $\alpha, 
\beta\in\mathbb N^{2n-1}_0$, $\gamma\in\mathbb N_0$, 
there is a constant $C_{\alpha,\beta,\gamma}>0$ such that 
\[|\partial^\alpha_x\partial^\beta_y\partial^\gamma_t a(x,y,t)|\leq 
C_{\alpha,\beta,\gamma}(1+|t|)^{m-|\gamma|},\ \ 
\mbox{for all $(x,y,t)\in K\times\mathbb R_+$, $t\geq1$}.\]
Put 
\[
S^{-\infty}(D\times D\times\mathbb{R}_+)
:=\bigcap_{m\in\mathbb R}S^m_{1,0}(D\times D\times\mathbb{R}_+).\]
Let $a_j\in S^{m_j}_{1,0}(D\times D\times\mathbb{R}_+)$, 
$j=0,1,2,\ldots$ with $m_j\rightarrow-\infty$, $j\rightarrow\infty$. 
Then there exists $a\in S^{m_0}_{1,0}(D\times D\times\mathbb{R}_+)$ 
unique modulo $S^{-\infty}$, such that 
$a-\sum^{k-1}_{j=0}a_j\in S^{m_k}_{1,0}(D\times D\times\mathbb{R}_+)$ 
for $k=1,2,\ldots$. 

If $a$ and $a_j$ have the properties above, we write $a\sim\sum^{\infty}_{j=0}a_j$ in 
$S^{m_0}_{1,0}(D\times D\times\mathbb{R}_+)$. We write
\begin{equation}  \label{e-gue201114yydI}
s(x, y, t)\in S^{m}_{{\rm cl\,}}(D\times D\times\mathbb{R}_+)
\end{equation}
if $s(x, y, t)\in S^{m}_{1,0}(D\times D\times\mathbb{R}_+)$ and 
\begin{equation}\label{e-gue201114yydII}
\begin{split}
&s(x, y, t)\sim\sum^\infty_{j=0}s_j(x, y)t^{m-j}\text{ in }S^{m}_{1, 0}
(D\times D\times\mathbb{R}_+)\,,\\
&s_j(x, y)\in \cC^\infty(D\times D),\ j\in\N_0.
\end{split}\end{equation}
\end{defn}

Let $X$ be an orientable paracompact smooth manifold of dimension $2n+1$ with $n\geq 1$. Let $T^{1,0}X$ be a subbundle of rank $n$ of the complexified tangent bundle $\C TX$. Let $T^{0,1}X:=\ov {T^{1,0}X}$. Let $\cC^\infty(X,T^{1,0}X)$ be the space of smooth sections of $T^{1,0}X$ on $X$. $T^{1,0}X$ is called a CR structure of $X$, if 
\be
T^{1,0}X\cap T^{0,1}X=\{0\}, \quad [\cC^\infty(X,T^{1,0}X),\cC^\infty(X,T^{1,0}X)]\subset \cC^\infty(X,T^{1,0}X). 
\ee
$(X,T^{1,0}X)$ is called a CR manifold of dimension $2n+1$, if $T^{1,0}X$ is a CR structure of $X$. 

Let $\langle\cdot|\cdot\rangle: \C TX\times \C TX\rightarrow \C$ be a smooth Hermitian inner product on $\C TX$ such that
\be\label{*}
\langle u|v \rangle &\in& \R\quad\mbox{for all}~u,v\in TX,\\\label{*'}
\langle \alpha|\beta\rangle&=&0\quad\mbox{for all }~\alpha\in T^{1,0}X,~\beta\in T^{0,1}X.
\ee
The Hermitian norm is given by $|\cdot|:=\sqrt{\langle \cdot|\cdot \rangle}$. 
We call $\langle\,\cdot\,|\,\cdot\,\rangle$ a Hermitian metric on $X$. Let $g^{TX}$ be the Riemannian metric 
on $TX$ given by $g^{TX}(u,v):=\langle u|v\rangle$, $u,v\in TX$. 
For $p, q\in\mathbb N_0$, define $T^{p,q}X:=(\Lambda^pT^{1,0}X)\wedge(\Lambda^qT^{0,1}X)$ and let 
$T^{\bullet,\bullet}X=\bigoplus_{p,q\in \N_0}T^{p,q}X$. For $u\in \C TX$ and $\phi \in \C T^*X$, the pointwise duality is defined by $\langle u,\phi \rangle:=\phi(u).$
Let $T^{*1,0}X\subset \C T^*X$ be the dual bundle of $T^{1,0}X$ and $T^{*0,1}X\subset \C T^*X$ be the dual bundle of $T^{0,1}X$.
For $p, q\in\mathbb N_0$, the bundle of $(p,q)$ forms is denoted by $T^{*p,q}X:=(\Lambda^pT^{*1,0}X)\wedge(\Lambda^qT^{*0,1}X)$ and let 
$T^{*\bullet,\bullet}X:=\oplus_{p,q\in\mathbb N_0}T^{*p,q}X$. The induced Hermitian inner product  on $T^{\bullet,\bullet}X$ and $T^{*\bullet,\bullet}X$ are still denoted by $\langle\cdot|\cdot\rangle$. The Hermitian norms are still denoted by $|\cdot|$. Let $\Omega^{p,q}(X):=\cC^{\infty}(X,T^{*p,q}X)$ be the space of smooth $(p,q)$-forms on $X$ and $\Omega^{\bullet,\bullet}(X):=\bigoplus_{p,q\in \N_0}\Omega^{p,q}(X)$. 
Let $\cC^\infty(X):=\Omega^{0,0}(X)$. 

Let $Y\subset X$ be a subset. 
We denote by $T^{1,0}Y:=T^{1,0}X|_Y$ and $T^{0,1}Y:=T^{0,1}X|_Y$ the induced bundles over $Y$.  
    
    Since $X$ is orientable, there exists a nowhere vanishing $(2n+1)$-form $dv_X$ on $X$ such that $|dv_X|=1$. 
    Let $D\subset X$ be an open subset. 
Let $\{ \ov L_j \}_{j=1}^n
$ be an orthonormal frame of
$T^{1,0}D $ with the dual (orthonormal) frame $\{ \ov e_j \}_{j=1}^n$ of $T^{*1,0}D $. Let
$\{  L_j\}_{j=1}^n$ be an orthonormal frame of $T^{0,1}D$ with the dual (orthonormal) frame $\{ e_j \}_{j=1}^n$ of $T^{*0,1}D$. 
    Note that
     $i^n \ov e_1\wedge  e_1\wedge\ldots\wedge \ov e_n\wedge e_n$
    is a real $2n$-form on $D$ and is independent of the choice of orthonormal frame $\{\overline e_j\}^n_{j=1}$ and hence 
    $i^n \ov e_1\wedge  e_1\wedge\ldots\wedge \ov e_n\wedge e_n$ is globally defined. There exists a real $1$-form $\omega_0\in\cC^\infty(X,T^*X)$ on $X$ such that $|\omega_0|=1$, $\omega_0$ is orthogonal to $T^{*1,0}X\oplus T^{*1,0}X$ and
    \be
    {dv_X=(i^n \ov e_1\wedge  e_1\wedge\ldots\wedge \ov e_n\wedge e_n)\wedge \omega_0.}
    \ee
There exists a real vector field $T\in\cC^\infty(X,TX)$ such that $|T|=1$ and
\be
{\langle T,\omega_0\rangle=1,}\quad \langle T,e_j\rangle=\langle T,\ov e_j \rangle=0,\quad j=1,\ldots,n.
\ee
Thus, $\omega_0$ and $T$ are uniquely determined.
$\omega_0$ is called the uniquely determined global real $1$-form.
$T$ is called the uniquely determined global real vector field.
We have the orthogonal decompositions with respect to the Hermitian metric $\Theta_X$:
\be
\C TX&=&T^{1,0}X\bigoplus T^{0,1}X\bigoplus \C \{T\},\\
\C T^*X&=&T^{*1,0}X\bigoplus T^{*0,1}X\bigoplus \C {\{\omega_0\}.}
\ee

\begin{defn}
	For $x\in X$, the Levi form on $T^{1,0}_xX$ is given by, for $u,v\in T_x^{1,0}X$, we have 
	{\be\label{e-201120levi}
	\cL_x(u,\ov v):=\frac{-1}{2i}\left\langle[U,\ov V](x),\omega_0(x)\right\rangle,
	\ee}
	where $U, V\in \cC^\infty(X,T^{1,0}X)$ with $U(x)=u$, $V(x)=v$. 
\end{defn}
Note that $\cL_x$ is independent of the choice of $U$ and $V$, and {$\cL_x(u,\ov v)=\frac{1}{2i}\langle U(x)\wedge \ov V(x),d\omega_0(x) \rangle$} for $x\in X$.

\begin{defn}\label{d-gue201204yyd}
	Let $\pi^{p,q}: \Lambda^{p+q} \C T^*X\longrightarrow T^{*p,q}X$ be the natural projection for $p,q\in \N_0$, $p+q\geq1$. The tangential (resp. anti-tangential) Cauchy-Riemann operator is given by
	\be
	\ddbar_b:=\pi^{p,q+1}\circ d: \Omega^{p,q}(X)\longrightarrow\Omega^{p,q+1}(X),\\
	\dbar_b:=\pi^{p+1,q}\circ d: \Omega^{p,q}(X)\longrightarrow\Omega^{p+1,q}(X).
	\ee
\end{defn}

Let $D\subset X$ be an open set. 
Let $\Omega^{p,q}_c(D)$ be the space of $(p,q)$-forms on $D$ with compact support in $D$. Let $\Omega^{\bullet,\bullet}_c(D):=\bigoplus_{p,q\in \N_0}\Omega^{p,q}_c(D)$. 
We write $\cC^\infty_c(D):=\Omega^{0,0}_c(D)$. Let $(\,\cdot\,|\,\cdot\,)$ be the $L^2$ inner product on $\Omega^{\bullet,\bullet}_c(X)$ induced by $\langle\,\cdot\,|\,\cdot\,\rangle$. Note that 
\be
(u|v):=\int_X\langle u(x)|v(x) \rangle dv_X(x),\ \ u, v\in\Omega^{\bullet,\bullet}_c(X),
\ee
where {$dv_X:=(\Theta_X^n/n!)\wedge\omega_0$} 
is the volume form induced by the Hermitian metric $\Theta_X$ on $X$. 
Let $L^2_{p,q}(X)$ be the completion of  $\Omega^{p,q}_c(X)$ with respect to $(\,\cdot\,|\,\cdot\,)$. 
Let $L^2_{\bullet,\bullet}(X):=\bigoplus_{p,q\in \N_0} L^2_{p,q}(X)$. We write $L^2(X):=L^2_{0,0}(X)$. 
We denote by $\|u\|^2:=(u|u)$ the $L^2$-norm on $X$. Let $\ddbar_b^*$ and $\dbar_b^*$ be the formal adjoints of $\ddbar_b$ and $\dbar_b$ with respect to $(\,\cdot\,|\,\cdot\,)$ respectively. Let $\square_b:=\ddbar_b\ddbar_b^*+\ddbar_b^*
\ddbar_b$ be the Kohn Laplacian on $\Omega^{\bullet,\bullet}(X)$. Let $\ov \square_b:=\dbar_b\dbar_b^*+\dbar_b^*
\dbar_b$ be the anti-Kohn Laplacian on $\Omega^{\bullet,\bullet}(X)$. 
We still denoted by $\ddbar_b$ the maximal extension and by $\ddbar_b^*$ 
the Hilbert space adjoint with respect to the $L^2$-inner product on $X$. 
We also denote by 
\begin{equation}\label{e:gaf1}
\square_b=\ddbar_b\ddbar_b^*+\ddbar_b^*
\ddbar_b: \Dom\square_b\subset 
L^2_{\bullet,\bullet}(X)\rightarrow L^2_{\bullet,\bullet}(X)
\end{equation}
the Gaffney extension of the Kohn Laplacian with the domain 
\begin{equation}\label{e:gaf2}
\Dom\square_b=\big\{u\in L^2_{\bullet,\bullet}(X): 
u\in\Dom\ddbar_b\cap\Dom\ddbar_b^*,
~\ddbar_b u\in\Dom\ddbar_b^*,
~\ddbar_b^* u\in\Dom\ddbar_b\big\}.
\end{equation}
By a result of Gaffney $\square_b$ is a self-adjoint operator
(see e.g.\ \cite[Proposition 3.1.2]{MM}).

\subsection{The $\R$-action on $X$} 
Let $(X,T^{1,0}X)$ be a CR manifold of $\dim X=2n+1$. Let $r: \mathbb R\times X\rightarrow X$, $r(x)=r\circ x$ for $r\in\R$, be a $\R$-action on $X$, see \cite{HHL17}. Let $\oh T$ be the  global real vector field on $X$ given by
\be
(\oh Tu)(x):=\frac{\dbar}{\dbar r}(u(r\circ x))|_{r=0}, \quad u\in \cC^\infty(X).
\ee
    
\begin{defn}\label{d-gue201128yyd}
The $\R$-action is called CR if $[\oh T,\cC^{\infty}(X,T^{1,0}X)]\subset \cC^{\infty}(X,T^{1,0}X)$. The $\R$-action is called transversal if $\C T_xX=T_x^{1,0}X\oplus T_x^{0,1}X\oplus\C \oh T(x)$ at every $x\in X$. The $\R$-action is called locally free, if $\oh T(x)\neq 0$ at every $x\in X$. 
\end{defn}

From now on, assume that $X$ admits a transversal and CR $\R$-action. We take the Hermitian metric $\langle\,\cdot\,|\,\cdot\,\rangle$ so that $\oh T=T$. 

We use the local coordinates of Baouendi-Rothschild-Trevers (BRT charts) \cite[theorem 6.5]{HM16} extensively as follows.

\begin{thm}[BRT charts]
	For each point $x\in X$, there exists a coordinate neighbourhood $D=U\times I$ with coordinates $x=(x_1,\ldots,x_{2n+1})$ centered at $0$, 
	where $U=\{z=(z_1,\ldots,z_n)\in\C^n:|z|<\epsilon \}$ and $I=\{x_{2n+1}\in \R: |x_{2n+1}|<\epsilon_0 \}$, $\epsilon, \epsilon_0>0$, $z=(z_1,\ldots,z_n)$ and $ z_j=x_{2j-1}+\sqrt{-1}x_{2j}, j=1,\ldots,n$, such that 
	\be
	T=\frac{\dbar}{\dbar x_{2n+1}}\quad\mbox{on}\quad D,
	\ee
	and there exists $\phi\in \cC^\infty(U,\R)$ independent of $x_{2n+1}$ satisfying that
	\be
	\left\{Z_j :=\frac{\dbar}{\dbar z_j}+i\frac{\dbar\phi}{\dbar z_j}(z)\frac{\dbar}{\dbar x_{2n+1}}\right\}_{j=1}^n
	\ee
	is a frame of $T^{1,0}D$, and 
	\be
	\left\{	dz_j\right\}_{j=1}^n\subset T^{*1,0}D
	\ee 
	is the dual frame.
\end{thm}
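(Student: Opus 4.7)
The plan is to produce the BRT coordinates in four steps: straighten $T$ by its flow from a transversal slice, descend the CR structure to the slice, bring the resulting frame into graph form, and convert a complex primitive for the frame coefficients into a real potential via a shift of the transversal coordinate. The first step is immediate: fix a basepoint $p_0\in X$ and a smooth $2n$-dimensional slice $\Sigma\subset X$ through $p_0$ transverse to $T$, and use the smooth $\R$-action generated by $T$ together with the flow-box theorem to produce coordinates $(y,x_{2n+1})$ on a neighbourhood $D$ of $p_0$ with $T=\partial/\partial x_{2n+1}$.

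Second, the CR condition $[T,\cC^\infty(X,T^{1,0}X)]\subset\cC^\infty(X,T^{1,0}X)$ makes the flow preserve $T^{1,0}X$, and the transversal decomposition $\C TX=T^{1,0}X\oplus T^{0,1}X\oplus\C T$ lets me project along $\C T$ onto a well-defined rank-$n$ subbundle $V:=\pi(T^{1,0}X|_\Sigma)\subset\C T\Sigma$ with $V\cap\ov V=\{0\}$. The involutivity of $T^{1,0}X$ together with the CR property of $T$ then descends to $[V,V]\subset V$. By Newlander--Nirenberg, $V$ is an integrable complex structure on $\Sigma$ and $\Sigma$ admits local holomorphic coordinates $z=(z_1,\ldots,z_n)$ near $p_0$; transporting these along the orbits of $T$ produces coordinates $(z_1,\ldots,z_n,x_{2n+1})$ on $D$.

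Third, lifting the frame $\{\partial/\partial z_j\}\subset V$ back to $T^{1,0}X|_\Sigma$ via the inverse of $\pi|_{T^{1,0}X}$ and extending it along the orbits of $T$ by flow-invariance produces a smooth frame
\[Z_j=\frac{\partial}{\partial z_j}+\gamma_j(z,\ov z)\frac{\partial}{\partial x_{2n+1}},\qquad j=1,\ldots,n,\]
of $T^{1,0}D$ with coefficients independent of $x_{2n+1}$. Involutivity of $T^{1,0}X$ forces $[Z_j,Z_k]=(\partial_{z_j}\gamma_k-\partial_{z_k}\gamma_j)T\in T^{1,0}D$, whence $\partial_{z_j}\gamma_k=\partial_{z_k}\gamma_j$; so the $(1,0)$-form $\sum_j\gamma_j\,dz_j$ is $\partial$-closed in the $z$-variables (with $\ov z$ as parameter). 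The $\partial$-Poincar\'e lemma (equivalently, Dolbeault applied to the conjugate form) yields $\psi\in\cC^\infty(U,\C)$ with $\partial_{z_j}\psi=\gamma_j$. Writing $\psi=u+iv$ with real $u,v$, the translation $x_{2n+1}\mapsto x_{2n+1}-u(z,\ov z)$ preserves $T=\partial/\partial x_{2n+1}$ and updates the frame coefficients to $\gamma_j-\partial_{z_j}u=i\,\partial_{z_j}v$. Setting $\phi:=v$, which is real and independent of $x_{2n+1}$, yields the advertised BRT form, while the dual-frame property $\{dz_j\}\subset T^{*1,0}D$ is immediate from $dz_j(\ov Z_k)=dz_j(T)=0$ and $dz_j(Z_k)=\delta_{jk}$.

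The main obstacle is the second step: well-definedness and formal integrability of the descended bundle $V$ on $\Sigma$ are the only essential uses of the CR and transversality hypotheses on the $\R$-action, and Newlander--Nirenberg is invoked only here. Once $\Sigma$ has been holomorphically coordinatized, the graph-form derivation and the real-potential adjustment are routine coordinate bookkeeping.
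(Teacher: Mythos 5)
The paper does not prove this statement: it is quoted from \cite[Theorem~6.5]{HM16} (ultimately Baouendi--Rothschild--Treves), so there is no in-paper argument to compare against. Your proof is correct and is essentially the standard derivation: flow-box for $T$, descent of $T^{1,0}X$ to a transversal slice $\Sigma$ by projecting along $\C T$, Newlander--Nirenberg on $\Sigma$, graph form of the lifted frame, the symmetry $\partial_{z_j}\gamma_k=\partial_{z_k}\gamma_j$ from involutivity plus $T^{1,0}D\cap\C T=\{0\}$, a local $\partial$-primitive $\psi$, and the shift $x_{2n+1}\mapsto x_{2n+1}-\mathrm{Re}\,\psi$ to make the potential purely imaginary. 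Two places where your sketch is terse but does go through: (i) to see that $[V,V]\subset V$ you should extend sections of $T^{1,0}X|_\Sigma$ and the coefficient functions $T$-invariantly in the flow-box coordinates, so that $[T,u]=0$ and the bracket $[u-\alpha T,\,u'-\alpha' T]$ visibly lands in $T^{1,0}X\oplus\C T$ while having no $\partial_{x_{2n+1}}$-component, hence projects into $V$ --- the CR hypothesis enters exactly in the flow-invariance of $T^{1,0}X$ that makes such extensions possible; (ii) the Dolbeault--Grothendieck lemma produces $\psi$ only after shrinking $U$ to a ball or polydisc, and the final translation requires recentering $I$, both harmless. With those remarks, the argument is complete.
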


Let $D=U\times I$ be a BRT chart. Let $f\in\cC^\infty(D)$ and $u\in\Omega^{p,q}(D)$ with $u=\sum_{I,J}u_{IJ}dz_I\wedge d\ovz_J$ with ordered sets $I,J$ and $u_{IJ}\in\cC^\infty(D)$, for all $I, J$. We have 
\be
&&{df=\sum_{j=1}^nZ_j(f)dz_j+\sum_{j=1}^n\ov Z_j(f)d\ovz_j+T(f)\omega_0},\\
&&\dbar_b f =\sum_{j=1}^n Z_j(f)dz_j,\quad\ddbar_b f =\sum_{j=1}^n\ov Z_j(f)d\ovz_j,\\
&&\dbar_b u
=\sum_{I,J} (\dbar_b u_{IJ})\wedge dz_I\wedge d\ovz_J,\quad \ddbar_b u
=\sum_{I,J} (\ddbar_b u_{IJ})\wedge dz_I\wedge d\ovz_J.
\ee 
For $u\in \Omega^{p,q}(X)$, let $\mL_T u$ be the Lie derivative of $u$ in the direction of $T$. For simplicity, we write $Tu$ to denote $\mL_T u$. Since the $\R$-action is CR, $Tu\in\Omega^{p,q}(X)$. On a BRT chart $D$, for $u\in\Omega^{p,q}(D)$, $u=\sum_{I,J}u_{IJ}dz_I\wedge d\overline z_J$, 
we have $Tu=\sum_{I,J}(Tu_{IJ})\wedge dz_I\wedge d\ovz_J$ on $D$.


The Levi form $\cL$ in a BRT chart $D\subset X$ has the form 
\be
\cL=\dbar\ddbar\phi|_{T^{1,0}X}. 
\ee  
Indeed, the global real $1$-form is
{$\omega_0=dx_{2n+1}-\sum_{j=1}^n(i\frac{\dbar\phi}{\dbar z_j}dz_j-i\frac{\dbar\phi}{\dbar\ov z_j}d\ov z_j)$} on $D$.
 
 
From now on, we assume that $\Theta_X$ is $\mathbb R$-invariant. Let $D=U\times I$ be a BRT chart. 
The $(1,1)$ form $\Theta=\Theta_U$ on $U$ is defined by, for $x=(z,x_{2n+1})\in D$,
\be
\Theta(z):=\Theta_X(x).
\ee
Note that it is independent of $x_{2n+1}$. More precisely, 
\be
\Theta(z)=\sqrt{-1}\sum^n_{j,k=1}\left\langle Z_j|Z_k \right\rangle(x)dz_j\wedge d\ov z_k.
\ee
Note that for another BRT coordinates $D=\til U\times \til I$, $y=(w,y_{2n+1})$, there exist biholomorphic map $H\in \cC^\infty(U,\tilde U)$ and $G\in\cC^\infty(U,\mathbb R)$ such that $H(z)=w$, for all $z\in U$, $y_{2n+1}=x_{2n+1}+G(z)$, for all $(z,x_{2n+1})\in U\times I$ and $\tilde U=H(U)$, $\tilde I=I+G(U)$. We deduce that $\Theta$ is independent of the choice of BRT coordinates, i.e., $\Theta=\Theta_U=\Theta_{\til U}$. 

Until further notice, we work on a BRT chart $D=U\times I$. For $p, q\in\mathbb N_0$, let $T^{*p,q}U$ be the bundle of $(p,q)$ forms on $U$ and let $T^{*\bullet,\bullet}U:=\oplus_{p,q\in\mathbb N_0}T^{*p,q}U$. 
For $p, q\in\mathbb N_0$, let $T^{p,q}U$ be the bundle of $(p,q)$ vector fields on $U$ and let $T^{\bullet,\bullet}U:=\oplus_{p,q\in\mathbb N_0}T^{p,q}U$.
The  $(1,1)$ form $\Theta$ induces Hermitian metrics  on $T^{\bullet,\bullet}U$ and $T^{*\bullet,\bullet}U$. We shall use 
$\langle\,\cdot\,,\,\cdot\,\rangle_h$ to denote all the induced Hermitian metrics. 
The volume form on $U$ induced by $\Theta$ is given by $d\lambda(z):=\Theta^n/n!$. Thus, the volume form $dv_X$ can be represented by 
\be
dv_X(x)=d\lambda(z)\wedge dx_{2n+1} ~\mbox{on}~D.
\ee
The $L^2$-inner product on $\Omega^{\bullet,\bullet}_c(U)$ with respect to $\Theta$ is given by
\be
\langle s_1,s_2\rangle_{L^2(U)}:=\int_U\langle s_1(z),s_2(z) \rangle_h d\lambda(z),\ \ s_1, s_2\in\Omega^{\bullet,\bullet}_c(U).
\ee
Let $t\in\R$ be fixed. The $L^2$-inner product on $\Omega^{\bullet,\bullet}_c(U)$ with respect to $\Theta$ and $e^{-2t\phi(z)}$ is given by
\be
\langle s_1,s_2\rangle_{L^2(U,e^{-2t\phi})}:=\int_U\langle s_1(z),s_2(z) \rangle_h e^{-2t\phi(z)} d\lambda(z),\ \ s_1, s_2\in\Omega^{\bullet,\bullet}_c(U).
\ee
The Chern curvature of $K_U^*:=\det(T^{1,0}U)$ with respect to $\Theta$ is given by 
\[R^{K_U^*}:=\ddbar\dbar\log\det\left(\langle \frac{\dbar}{\dbar z_j}, \frac{\dbar}{\dbar z_k}\rangle_h\right)^n_{j,k=1},\quad  R^{K_U^*}\in\Omega^{1,1}(U).\]
On $X$, define $K^*_X:=\det(T^{1,0}X)$. Then, $K^*_X$ is a CR line bundle over $X$. The Chern curvature $R^{K^*_X}$ of $K^*_X$ with respect to $\Theta_X$ is defined as   
follows: On a BRT chart $D$, let 
\begin{equation}\label{e-gue201025yyd}
R^{K_X^*}:=\ddbar_b\dbar_b\log\det\left(\langle Z_j|Z_k \rangle\right)_{j,k=1}^n.
\end{equation}
It is easy to see that $R^{K_X^*}$ is independent of the choice of BRT coordinates and hence $R^{K_X^*}$ is globally defined, i.e. $R^{K^*_X}\in\Omega^{1,1}(X)$ (see e.g. \cite[(6.6)]{HM16}). 

Let $\{  L_j\}_{j=1}^n$ be an $\mathbb R$-invariant orthonormal frame of $T^{0,1}D$ with the dual (orthonormal) frame $\{ e_j \}_{j=1}^n$. Then $\{ \ov L_j \}_{j=1}^n
$ is an $\mathbb R$-invariant orthonormal frame of
$T^{1,0}D $ with the dual (orthonormal) frame $\{ \ov e_j \}_{j=1}^n$. Since $\Theta_X$ is $\mathbb R$-invariant, 
there exist $c_j^k=c_j^k(z), w_j^k=w_j^k(z)\in\cC^\infty(U)$, $j,k=1,\ldots,n$, satisfying $\sum_{k=1}^n c_j^kw_k^l=\delta_j^l$, for all $j, l=1,\ldots,n$, such that for $j=1,\ldots,n$,
\be
&&\ov L_j=\sum_{k=1}^n \ov c_j^k  Z_k,\quad\quad \ov e_j=\ov w^j_k dz_k,\\
&&L_j=\sum_{k=1}^n c_j^k \ov Z_k,\quad\quad   e_j= w^j_k d\ov z_k.
\ee
We can check that $\{w_j:=\sum_{k=1}^n \ov c_j^k \frac{\dbar}{\dbar z_k};\, j=1,\ldots,n\}$ and 
$\{\ov w_j:=\sum_{k=1}^n  c_j^k \frac{\dbar}{\dbar \ov z_k};\, j=1,\ldots,n\}$ are orthonormal frames for $T^{1,0}U$ and $T^{0,1}U$ with respect to $\Theta$ respectively and 
$\{\overline e_j;\, j=1,\ldots,n\}$,  $\{e_j;\, j=1,\ldots,n\}$ are dual frames for $\{w_j;\, j=1,\ldots,n\}$ and 
$\{\ov w_j;\, j=1,\ldots,n\}$ respectively. We also write $w^j$ and $\overline w^j$ to denote $\overline e_j$ and $e_j$ respectively, $j=1,\ldots,n$. 

\subsection{The Fourier transform on $D$}
 
Let $D=U\times I$ be a BRT chart. Let $f\in \cC^\infty_c(D)$. We write $f=f(x)=f(z,x_{2n+1})$. For each fixed $x_{2n+1}\in I$, $f(\cdot,x_{2n+1})\in \cC_c^\infty(U)$. For each fixed $z\in U$, $f(z,\cdot)\in \cC_c^\infty(I)$. Let $p,q\in \N_0$, $u\in\Omega_c^{p,q}(D)$. We write $u=\sum_{I,J} u_{IJ}dz_I\wedge d\ov z_J\in \Omega^{p,q}_c(D)$ 
and we always assume that the summation is performed only over increasingly ordered indices $I=i_1<i_2<\ldots<i_p, J=j_1<j_2<\ldots<j_q$, and $u_{IJ}\in\cC^\infty_c(D)$, 
for all $I, J$.  For each fixed $z\in U$, $u_{IJ}(z,\cdot)\in \cC_c^\infty(I)$. 

\begin{defn}
The Fourier transform of the function $f\in\cC^\infty_c(D)$ with respect to $x_{2n+1}$, denoted by $\oh f$, is defined by 
\be
\oh{f}(z,t):=\int_{-\infty}^{\infty}e^{-itx_{2n+1}}f(z,x_{2n+1}) dx_{2n+1}\in\cC^\infty(U\times\mathbb R).
\ee	 
The Fourier transform of the form $u=\sum_{I,J} u_{IJ}dz_I\wedge d\ov z_J\in \Omega^{p,q}_c(D)$ with respect to $x_{2n+1}$, denoted by $\oh u$, is defined by
	\be
	\oh u(z,t)=\sum_{I,J}\oh u_{IJ}(z,t)dz_I\wedge d\ov z_J\in\Omega^{p,q}(U\times\mathbb R):=\cC^\infty(U\times\mathbb R, T^{*p,q}U).
	\ee
\end{defn}
Note that $\oh f\in \cC^\infty(U\times \R)$ and $\oh f(\cdot,t)\in\cC_c^\infty(U)$ for every $t\in\R$. Similary, $\oh u\in\Omega^{p,q}(U\times \R):=\cC^\infty(U\times\R, \wedge^{p,q}T^*U)$ and $\oh u(\cdot,t)\in \Omega^{p,q}_c(U)$ for every $t\in \R$. 
From Parserval formula, we have for $u, v\in \Omega_c^{p,q}(D)$, 
\be
\int_{-\infty}^{\infty} \langle u(z,x_{2n+1})| v(z,x_{2n+1})\rangle dx_{2n+1}=(1/2\pi) \int_{-\infty}^{\infty} \langle\oh u(z,t), \oh v(z,t)\rangle_h dt,
\ee
for every $z\in U$. By using integration by parts, we have for $u\in\Omega^{p,q}_c(D)$, 
\be
-\sqrt{-1}\oh{Tu}=t\oh u,\quad  i.e.,\quad~-\sqrt{-1}\oh{\frac{\dbar u}{\dbar x_{2n+1}}}(z,t)=t\oh{u}(z,t).
\ee
   
Let $t\in \R$ be fixed. Let $|(z,1)|^2_h:=e^{-2t\phi(z)}$ be the Hermitian metric on the trivial line bundle $U\times \C$ over $U$. The Chern connection of $(U\times \C,e^{-2t\phi})$ is given by
\begin{equation}\label{e-gue201025yydII}
\nabla^{(U\times \C,e^{-2t\phi})}=\nabla^{1,0}+\nabla^{0,1},  \quad \nabla^{1,0}=\dbar-2t\dbar\phi, \quad \nabla^{0,1}=\ddbar.
\end{equation}
Indeed, $\nabla^{(U\times \C,e^{-2t\phi})}=d+h^{-1}\dbar h=d+e^{2t\phi}\dbar(e^{-2t\phi})$.
The curvature of  $(U\times \C,e^{-2t\phi})$ is
\be
R^{(U\times\C,e^{-2t\phi})}=\left(\nabla^{(U\times \C,e^{-2t\phi})}\right)^2=2t\dbar\ddbar\phi.
\ee
We can identify $\dbar\ddbar\phi$ with Levi form $\cL$ and write $R^{(U\times\C,e^{-2t\phi})}=2t\cL$. 
Moreover, we will identify $\Omega^{\bullet,\bullet}(U)$ and $\Omega^{\bullet,\bullet}_c(U)$ with 
$\Omega^{\bullet,\bullet}(U, U\times\mathbb C)$ and $\Omega^{\bullet,\bullet}_c(U,U\times\mathbb C)$ respectively. 

\begin{prop}\label{p-gue201025yyd}
	Let $u,v\in \Omega^{\bullet,\bullet}_c(D)$. We have
	\be
	&&\oh{\ddbar_b u}=e^{-t\phi}\ddbar(e^{t\phi}\oh u) \quad\mbox{on}~U\times\R,\\
	&&\oh{\ddbar_b^* v}=e^{-t\phi}\ddbar^*(e^{t\phi}\oh v)\quad\mbox{on}~U\times\R,\\
	&&\oh {\dbar_b  u}=e^{-t\phi}\nabla^{1,0}(e^{t\phi}\oh u) \quad\mbox{on}~U\times\R,\\
	&&\oh{\dbar_b^*u}=e^{-t\phi}\nabla^{1,0*}(e^{t\phi}\oh u)\quad\mbox{on}~U\times\R,
	\ee
	where $\ddbar^*,\nabla^{1,0*}$ are the formal adjoints of $\ddbar,\nabla^{1,0}$ with respect to $\langle\,\cdot\,,\,\cdot\,\rangle_{L^2(U,e^{-2t\phi})}$ respectively and $\ddbar_b^*$, $\dbar^*_b$ are the formal adjoints of $\ddbar_b$, $\dbar_b$ with respect to $(\,\cdot\,|\,\cdot\,)$ respectively. 
	\end{prop}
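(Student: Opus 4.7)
The plan is to verify the four identities by treating the two direct identities (for $\ddbar_b$ and $\dbar_b$) as calculations on BRT coordinates, and deducing the two adjoint identities from Parseval's formula.

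First, for $\ddbar_b$: since $u = \sum_{I,J}u_{IJ}dz_I\wedge d\bar z_J$ and $\ddbar_b u = \sum_{I,J}(\ddbar_b u_{IJ})\wedge dz_I\wedge d\bar z_J = \sum_{I,J,j}\bar Z_j(u_{IJ})\,d\bar z_j\wedge dz_I\wedge d\bar z_J$, I only need to Fourier-transform $\bar Z_j f$ for a scalar $f\in\cC^\infty_c(D)$. Writing $\bar Z_j = \partial/\partial\bar z_j - i(\partial\phi/\partial\bar z_j)\partial/\partial x_{2n+1}$ and using that Fourier transform in $x_{2n+1}$ converts $\partial/\partial x_{2n+1}$ to multiplication by $it$, I get
\[
\widehat{\bar Z_j f}(z,t) = \frac{\partial\hat f}{\partial\bar z_j}(z,t) + t\frac{\partial\phi}{\partial\bar z_j}(z)\hat f(z,t) = e^{-t\phi(z)}\frac{\partial}{\partial\bar z_j}\bigl(e^{t\phi}\hat f\bigr)(z,t),
\]
which assembles to $\widehat{\ddbar_b u}=e^{-t\phi}\ddbar(e^{t\phi}\hat u)$. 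The $\dbar_b$ identity is analogous: from $Z_j=\partial/\partial z_j+i(\partial\phi/\partial z_j)\partial/\partial x_{2n+1}$ I obtain $\widehat{Z_j f}=\partial\hat f/\partial z_j - t(\partial\phi/\partial z_j)\hat f$, and a short algebraic check shows that this equals $e^{-t\phi}(\partial-2t\partial\phi)(e^{t\phi}\hat f)$, i.e.\ $e^{-t\phi}\nabla^{1,0}(e^{t\phi}\hat f)$, with $\nabla^{1,0}$ as in \eqref{e-gue201025yydII}.

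For the adjoint formulas, I pair $\ddbar_b u$ against an arbitrary $v\in\Omega^{\bullet,\bullet}_c(D)$ and apply Parseval in the $x_{2n+1}$ direction using $dv_X = d\lambda(z)\wedge dx_{2n+1}$:
\[
(\ddbar_b u\,|\,v) = \frac{1}{2\pi}\int_{\R}\!\!\int_U\bigl\langle e^{-t\phi}\ddbar(e^{t\phi}\hat u),\hat v\bigr\rangle_h\,d\lambda\,dt.
\]
Inside the $U$-integral I rewrite the inner product as the weighted pairing on $U$: multiplying and dividing by $e^{-2t\phi}$ gives
$\int_U\langle e^{-t\phi}\ddbar(e^{t\phi}\hat u),\hat v\rangle_h d\lambda = \langle \ddbar(e^{t\phi}\hat u), e^{t\phi}\hat v\rangle_{L^2(U,e^{-2t\phi})}$,
and by definition of the formal adjoint on $(U, e^{-2t\phi})$ this equals $\langle e^{t\phi}\hat u, \ddbar^*(e^{t\phi}\hat v)\rangle_{L^2(U,e^{-2t\phi})} = \int_U\langle \hat u, e^{-t\phi}\ddbar^*(e^{t\phi}\hat v)\rangle_h d\lambda$. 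Undoing Parseval on the other side yields
\[
(u\,|\,\ddbar_b^*v) = \frac{1}{2\pi}\int_\R\int_U\bigl\langle\hat u, e^{-t\phi}\ddbar^*(e^{t\phi}\hat v)\bigr\rangle_h d\lambda\,dt,
\]
and since $u\in\Omega^{\bullet,\bullet}_c(D)$ is arbitrary (so $\hat u(\cdot,t)$ runs over a sufficiently rich family via choices of bump functions in $z$ and in $x_{2n+1}$), we identify $\widehat{\ddbar_b^* v}(z,t) = e^{-t\phi}\ddbar^*(e^{t\phi}\hat v)(z,t)$. The formula for $\dbar_b^*$ follows by the same pairing argument applied to $\nabla^{1,0}$ in place of $\ddbar$.

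The proof is essentially a bookkeeping exercise, so the main pitfall rather than a conceptual obstacle is keeping track of the signs and weights: checking that $\bar Z_j$ produces $+t(\partial\phi/\partial\bar z_j)$ (hence a conjugation by $e^{t\phi}$, not $e^{-t\phi}$), while $Z_j$ produces $-t(\partial\phi/\partial z_j)$ which only matches after introducing the extra $-2t\partial\phi$ term in $\nabla^{1,0}$. Once this is pinned down, the formal-adjoint identity requires no new ideas beyond Parseval and the definition of the weighted inner product, and the identification of the Fourier transform of the adjoint is legitimate because $(z,x_{2n+1})\mapsto\hat u(z,t)$ can be made arbitrary by separately varying $u$ in $z$ and in $x_{2n+1}$.
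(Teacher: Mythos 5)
Your proposal is correct and follows essentially the same route as the paper: a direct Fourier-side computation of $\oh{\ov Z_j f}$ and $\oh{Z_j f}$ for the $\ddbar_b$ and $\dbar_b$ identities, and Parseval's formula combined with the definition of the formal adjoint in the weighted space $L^2(U,e^{-2t\phi})$ for the two adjoint identities. The sign bookkeeping you flag (the $+t\,\dbar\phi/\dbar\ov z_j$ term versus the $-t\,\dbar\phi/\dbar z_j$ term absorbed by the $-2t\dbar\phi$ in $\nabla^{1,0}$) is exactly right.
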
 

\begin{proof}
Let $u=\sum_{I,J}u_{IJ}dz_I\wedge d\ovz_J$.	
By
$\ddbar_b u=\sum_{I,J} \sum_{j=1}^n\left(\frac{\dbar u_{IJ}}{\dbar \ovz_j}-i\frac{\dbar\phi}{\dbar \ovz_j}\frac{\dbar u_{IJ}}{\dbar x_{2n+1}}\right) d\ovz_j\wedge dz_I\wedge d\ovz_J$,
\be
\begin{split}
\left(\oh{\ddbar_b u}\right)(z,t)
&=\sum_{I,J} \sum_{j=1}^n\left(\frac{\dbar \oh u_{IJ}}{\dbar \ovz_j}(z,t)+t\frac{\dbar\phi}{\dbar \ovz_j}(z)\oh u_{IJ}(z,t)\right) d\ovz_j\wedge dz_I\wedge d\ovz_J\\
&=e^{-t\phi(z)}\ddbar(e^{t\phi} \sum_{I,J}\oh u_{IJ}dz_I\wedge d\ovz_{J})(z,t)\\
&=\left(e^{-t\phi}\ddbar(e^{t\phi}\oh u)\right)(z,t).
\end{split}
\ee
Thus the first equality holds. From Parserval's formula, 
\be
\begin{split}
	&(\ddbar_b u|v)\\
	=&\int_D \langle\ddbar_b u|v\rangle d\lambda(z)dx_{2n+1}\\
	=&\int_{U}\left((2\pi)^{-1}\int_{-\infty}^{\infty}\langle \oh {\ddbar_b u}(z,t),\oh v(z,t)\rangle_h dt\right) d\lambda(z)\\
	=&(2\pi)^{-1}\int_{-\infty}^{\infty}\int_U\langle e^{-t\phi}\ddbar(e^{t\phi}\oh u), \oh v\rangle_h d\lambda(z)dt\\
	=&(2\pi)^{-1}\int_{-\infty}^{\infty}  \langle\ddbar(e^{t\phi}\oh u),e^{t\phi}\oh v \rangle_{L^2(U,e^{-2t\phi})} dt\\
	=&(2\pi)^{-1}\int_{-\infty}^{\infty}  \langle e^{t\phi}\oh u ,\ddbar^*(e^{t\phi}\oh v) \rangle _{L^2(U,e^{-2t\phi})} dt\\
	=&(2\pi)^{-1}\int_{-\infty}^{\infty}\int_U \langle \oh u,e^{-t\phi}\ddbar^*(e^{t\phi}\oh v) \rangle_h d\lambda(z)dt. 
\end{split}
\ee
Meanwhile, we have  
\be
(\ddbar_b u|v)=(u|\ddbar_b^* v)=(2\pi)^{-1}\int_{-\infty}^{\infty}\int_{U}\langle\oh u,\oh{\ddbar_b^* v}\rangle_h d\lambda(z)dt.
\ee
Thus the second equality holds. The proofs of the third and the fourth equalities are similar. 
\comment{
The third equality follows from the second equality. In fact, let $f=f(z)\in\Omega_c^{\bullet,\bullet}(U)$. Fix $t\in\R$, we compute
\be
&&\langle f,e^{-t\phi}\ddbar^*(e^{t\phi}\oh v)\rangle_{L^2(U)}\\
&=&\langle e^{t\phi}f, \ddbar^*(e^{t\phi}\oh v)\rangle_{L^2(U,e^{-2t\phi})}\\
&=&\langle \ddbar(e^{t\phi}f), e^{t\phi}\oh v\rangle_{L^2(U,e^{-2t\phi})}\\
&=&\langle te^{t\phi}\ddbar\phi \wedge f+e^{t\phi} \ddbar f,e^{t\phi}\oh v\rangle_{L^2(U,e^{-2t\phi})}\\
&=&\langle t\ddbar\phi \wedge f+ \ddbar f,\oh v\rangle_{L^2(U)}\\
&=&\langle f, t(\ddbar\phi)^\lrcorner \oh v+\ddbar^*_0 \oh v\rangle_{L^2(U)},
\ee
where $\langle (\ddbar\phi)^\lrcorner \alpha,\beta \rangle_h:=\langle \alpha,\ddbar\phi\wedge\beta \rangle_h$ for $\alpha,\beta\in\Omega_c^{\bullet,\bullet}(U)$ and $\ddbar^*_0$ is the formal adjoint of $\ddbar$ with respect to $L^2(U)$ on $\Omega^{\bullet,\bullet}(U)$. Thus we have on $U\times\R$,
\be
e^{-t\phi}\ddbar^*(e^{t\phi}\oh v)=t(\ddbar\phi)^\lrcorner \oh v+\ddbar^*_0 \oh v.
\ee
We set $\oh u(z,t):=f(z)$ for fixed $t\in \R$. We have for each $t\in\R$, 
\be
\langle \oh u(z,t),e^{-t\phi}\ddbar^*(e^{t\phi}\oh v)\rangle_{L^2(U)}=\langle \oh u(z,t), t(\ddbar\phi)^\lrcorner \oh v+\ddbar^*_0 \oh v\rangle_{L^2(U)}.
\ee
We compute
\be
&&2\pi(u|\ddbar_b^* v)\\
&=&\int_{-\infty}^{\infty}\int_{U}\langle\oh u,\oh{\ddbar_b^* v}\rangle_h d\lambda(z)dt\\
&=&\int_{-\infty}^{\infty}\int_U \langle \oh u,e^{-t\phi}\ddbar^*(e^{t\phi}\oh v) \rangle_h d\lambda(z)dt\\
&=&\int_{-\infty}^{\infty}\int_U\langle \oh u(z,t), t(\ddbar\phi)^\lrcorner \oh v+\ddbar^*_0 \oh v\rangle_h d\lambda(z)dt\\
&=&\int_{-\infty}^{\infty}\int_U\langle \oh u(z,t), \oh{(-\sqrt{-1}T((\ddbar\phi)^\lrcorner  v))}+\oh{\ddbar^*_0  v}\rangle_h d\lambda(z)dt\\
&=&2\pi(u|-\sqrt{-1}T((\ddbar\phi)^\lrcorner v)+\ddbar^*_0  v)
 \ee
Thus 
\be
&&\ddbar_b^* v=-\sqrt{-1}T((\ddbar\phi)^\lrcorner v)+\ddbar^*_0  v,\\
&&\oh{\ddbar_b^* v}=t(\ddbar\phi)^\lrcorner \oh v+\ddbar^*_0 \oh v=e^{-t\phi}\ddbar^*(e^{t\phi}\oh v).
\ee
Thus the third equality holds.

Similarly, the forth equality and fifth equality hold. In fact, let $v=v_{IJ}dz_I\wedge d\ov z_J\in \Omega^{\bullet,\bullet}_c(D)$. By
$\dbar_b v=\sum_{IJ}\sum_{j=1}^n(\frac{\dbar v_{IJ}}{\dbar z_j}+i\frac{\dbar\phi}{\dbar {z_j}}\frac{\dbar v_{IJ}}{\dbar x_{2n+1}})dz_j\wedge dz_I\wedge d\ov z_J$,
\be
\oh {\dbar_b v}&=&\sum_{I,J}\sum_{j=1}^n \left(\frac{ \dbar \oh v_{IJ}}{\dbar z_j}-t\frac{\dbar\phi}{\dbar {z_j}}\oh v_{IJ}\right)dz_j\wedge dz_I \wedge dz_J\\
&=&\dbar \oh v-t\dbar\phi\wedge\oh v=e^{-t\phi}\nabla^{1,0}(e^{t\phi}\oh v).
\ee
\be
&\int_{-\infty}^{\infty}\int_U\langle\oh v, \oh{\dbar_b^*u} \rangle_h d\lambda(z)dt=2\pi(\dbar_b v|u)=\int_{-\infty}^{\infty}\int_U \langle \oh v, e^{-t\phi}\nabla^{1,0*}(e^{t\phi}\oh u)\rangle_h d\lambda(z)dt.
\ee
And the last equality follows from the fifth equality. In fact, we have
\be
&&e^{-t\phi}\nabla^{1,0*}(e^{t\phi}\oh u)=-t(\dbar\phi)^\lrcorner \oh u+\dbar^*_0 \oh u,\\
&&\dbar_b^* u=\sqrt{-1}T((\dbar\phi)^\lrcorner u)+\dbar_0^* u,\\
&&\oh{\dbar_b^* u}=-t(\dbar\phi)^\lrcorner \oh u+\dbar^*_0 \oh u=e^{-t\phi}\nabla^{1,0*}(e^{t\phi}\oh u).
\ee
Thus the last equality holds. }  
\end{proof}       
      
\section{CR Bochner formula with $\R$-action}

Recall that we work with the assumption that $X$ admits a transversal 
CR $\R$-action on $X$. {We will prove Bochner-Kodaira-Nakano 
formulas in the CR setting. 
They are refinements of Tanaka's basic identities \cite[Theorems 5.1, 5.2]{Tan75} 
in our context. Namely, Tanaka's formulas hold for any strictly pseudoconvex
manifold endowed with the Levi metric, while our formulas are specific
to CR manifolds with $\R$-action endowed with arbitrary Hermitian metric
$\Theta_X$.}

\subsection{CR Bochner-Kodaira-Nakano formula I} 
Analogue to \cite[(1.4.32)]{MM}, we define the Lefschetz operator $\Theta_X\wedge\cdot$ on $\bigwedge^{\bullet,\bullet}(T^*X)$ and its adjoint $\Lambda=i(\Theta_X)$ with respect to the Hermitian inner product $\langle\cdot|\cdot\rangle$ associated with $\Theta_X$. The Hermitian torsion of $\Theta_X$ is defined by  
\be
\mT:=[\Lambda,\dbar_b\Theta_X].
\ee
Let $D=U\times I$ be a BRT chart and let $\{\overline L_j\}^n_{j=1}\subset T^{1,0}D$,  $\{\overline e_j\}^n_{j=1}\subset T^{*1,0}D$, 
$\{w_j\}^n_{j=1}\subset T^{1,0}U$ be as in the discussion after \eqref{e-gue201025yyd}. We can check that 
\be
\Theta_X\wedge\cdot=\sqrt{-1}\ov e_j\wedge e_j\wedge\cdot, \quad \Lambda=-\sqrt{-1}i_{L_j}i_{\ov L_j}\quad \mbox{on}~D.
\ee
Note that $i_{L_j}$ and $i_{\ov L_j}$ are the adjoints of $e_j\wedge$ and $\ov e_j\wedge$ respectively.

Since $\dbar\Theta(z)=\dbar_b\Theta_X(x)$ on $D$, and
$\Theta\wedge\cdot=\sqrt{-1}\ov e_j\wedge e_j\wedge\cdot $, $\Lambda=-\sqrt{-1}i_{\ov w_j}i_{w_j}$ on $U$, see \cite[1.4.32]{MM}, we have $\mT=[\Lambda,\dbar_b\Theta]=[\Lambda,\dbar \Theta]$ on $\Omega^{\bullet,\bullet}(D)$, which is independent of $x_{2n+1}$.
We remark that $\mT$ is a differential operator of order zero. With respect to the Hermitian inner product $\langle\cdot|\cdot\rangle$ associated with $\Theta_X$, we have the adjoint operator $\mT^*$, the conjugate operator $\ov \mT$ and the adjoint of the conjugate operator $\ov \mT^*$ for $\mT$.  

\begin{thm}\label{t-gue201028yyd}
With the notations used above, we have on $\Omega^{\bullet,\bullet}(X)$, 
	\begin{equation}\label{e-gue201025yydI}
	\square_b=\ov \square_b+[2\sqrt{-1}\cL,\Lambda](-\sqrt{-1}T)+(\partial_b\,\mT^*+\mT^*\,\partial_b)-(\ddbar_b\,\ov\mT^*+\ov\mT^*\,\ddbar_b).
	\end{equation}
\end{thm}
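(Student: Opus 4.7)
The plan is to localize to a BRT chart $D=U\times I$ and reduce the identity to the classical Bochner-Kodaira-Nakano formula on the complex manifold $U$ via the Fourier transform in $x_{2n+1}$ developed in Proposition~\ref{p-gue201025yyd}. Since both sides of \eqref{e-gue201025yydI} are globally defined differential operators and $X$ is covered by BRT charts, it suffices to establish the identity on each such chart.

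Fix $D=U\times I$ and $t\in\R$. Consider the trivial line bundle $U\times\C$ equipped with the Hermitian metric $e^{-2t\phi}$. By \eqref{e-gue201025yydII}, its Chern connection splits as $\nabla^{1,0}+\ddbar$ with curvature $2t\dbar\ddbar\phi$, identified with $2t\cL$. Form the weighted Laplacians
\[
\square''_t := \ddbar\,\ddbar^* + \ddbar^*\,\ddbar, \qquad
\square'_t := \nabla^{1,0}\,\nabla^{1,0*} + \nabla^{1,0*}\,\nabla^{1,0},
\]
with adjoints taken in $\langle\,\cdot\,,\,\cdot\,\rangle_{L^2(U,e^{-2t\phi})}$. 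The key input from complex geometry is the classical Bochner-Kodaira-Nakano identity on $(U,\Theta)$ twisted by $(U\times\C,e^{-2t\phi})$ (see e.g.\ \cite{MM,Dem:85}):
\[
\square''_t \;=\; \square'_t + [2t\sqrt{-1}\cL,\Lambda] + (\nabla^{1,0}\mT^* + \mT^*\nabla^{1,0}) - (\ddbar\,\ov\mT^* + \ov\mT^*\,\ddbar),
\]
where $\mT=[\Lambda,\dbar\Theta]$ is the Hermitian torsion of $\Theta$.

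Next, transfer this identity back to $D$ using Proposition~\ref{p-gue201025yyd}. The relations $\oh{\ddbar_b u}=e^{-t\phi}\ddbar(e^{t\phi}\oh u)$, $\oh{\ddbar_b^*u}=e^{-t\phi}\ddbar^*(e^{t\phi}\oh u)$ and the analogous ones for $\dbar_b,\dbar_b^*$ paired with $\nabla^{1,0},\nabla^{1,0*}$ immediately yield
\[
\oh{\square_b u} = e^{-t\phi}\square''_t\bigl(e^{t\phi}\oh u\bigr), \qquad \oh{\ov\square_b u} = e^{-t\phi}\square'_t\bigl(e^{t\phi}\oh u\bigr).
\]
Because $\Theta_X$ is $\R$-invariant, $\Theta$ depends only on $z$ in BRT coordinates, so $\mT$, $\ov\mT$ and their formal adjoints are $t$-independent zero-order operators; in particular they commute with $e^{\pm t\phi}$ and with the Fourier transform in $x_{2n+1}$. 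Conjugating the weighted BKN identity applied to $e^{t\phi}\oh u$ by $e^{-t\phi}$ therefore produces the Fourier-transformed version of \eqref{e-gue201025yydI} on $U\times\R$, with the only residual $t$-dependence sitting inside $[2t\sqrt{-1}\cL,\Lambda]\oh u$.

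Finally, the relation $-\sqrt{-1}\oh{Tu}=t\,\oh u$ converts the scalar factor $t$ into the operator $-\sqrt{-1}T$ upon inverse Fourier transform, yielding exactly $[2\sqrt{-1}\cL,\Lambda](-\sqrt{-1}T)$; the torsion terms transfer verbatim. This gives \eqref{e-gue201025yydI} on $D$, and uniqueness of differential operators promotes the identity to all of $X$. The main obstacle is the bookkeeping in the $e^{t\phi}$-conjugation: one must confirm that the only $t$-dependence that survives after conjugation is the Chern curvature commutator, and that no hidden $t$-contributions arise from the torsion. This rests on $\mT$ being a zero-order operator constructed purely from $\Theta$ and the $(1,0)/(0,1)$ decomposition of $U$, together with the $\R$-invariance of $\Theta_X$ ensuring that $\Theta$ has no $x_{2n+1}$-dependence in BRT coordinates.
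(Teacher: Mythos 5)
Your proposal is correct and follows essentially the same route as the paper: reduce to a BRT chart, invoke the weighted Bochner--Kodaira--Nakano identity of \cite[(1.4.44)]{MM} for $(U\times\C,e^{-2t\phi})$, and transfer it back via the Fourier-transform relations of Proposition~\ref{p-gue201025yyd}, with $-\sqrt{-1}\oh{Tu}=t\,\oh u$ producing the term $[2\sqrt{-1}\cL,\Lambda](-\sqrt{-1}T)$. The only difference is one of execution: the paper verifies the identity weakly, pairing against a second compactly supported form through Parseval's formula, whereas you conjugate the BKN identity directly by $e^{\pm t\phi}$ and invert the Fourier transform; both arguments rest on the same ingredients.
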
 

\begin{proof}  
	\comment{Let $\{ B_k\subset X \}_{k\geq 1}$ and $\{ \chi_k\in\cC_c^\infty(B_k,\R) \}_{k\geq 1}$ be a partition of unity with $\sum_{k\geq 1} \chi_k=1$ and $\cup_{k\geq 1} B_k=X$ such that if $B_i\cap B_j\neq\emptyset$ there exists a BTR chart $D=U\times I\subset X$ satisfying $B_i\cup B_j\subset D$. Let $u\in \Omega_c^{p,q}(X)$. Thus $\chi_k u\in \Omega^{p,q}_c(B_k)$ for each $k$. For fixed $t\in\R$, we set
	\be
	s_k(z):=e^{t\phi(z)}\oh{\chi_k u}(z,t)
	\ee
	for each $k$. We have that $s_k(z)\in \Omega_c^{p,q}(U)$ for some BRT chart $D=U\times I$.
	
	In the following, we consider $\chi_i u$ and $\chi_j u$ and suppose $B_i\cap B_j\neq \emptyset$ and $B_i\cup B_j\subset D$ for some BRT chart $D=U\times I$. We apply Bochner-Kodaira-Nakano formula \cite[(1.4.44)]{MM} on $s_i$ and $s_j$  for the trivial line bundle $U\times \C$ endowed with Hermitian metric $|(z,1)|_h^2=e^{-2t\phi(z)}$ over $U$,}
	Since the both side of \eqref{e-gue201025yydI} are globally defined, we can check \eqref{e-gue201025yydI} on a BRT chart. Now, we work on a BRT chart $D=U\times I$. We will use the same notations as before. Let 
	\[\begin{split}
	&\square^{(U\times \C, e^{-2t\phi})}=\ddbar\,\ddbar^*+\ddbar^*\,\ddbar: \Omega^{\bullet,\bullet}_c(U)\rightarrow\Omega^{\bullet,\bullet}_c(U),\\
	&\ov\square^{(U\times \C, e^{-2t\phi})}:=\nabla^{1,0*}\nabla^{1,0}+\nabla^{1,0}\nabla^{1,0*}:  \Omega^{\bullet,\bullet}_c(U)\rightarrow\Omega^{\bullet,\bullet}_c(U),
	\end{split}\]
	where $\nabla^{1,0}$ is given by \eqref{e-gue201025yydII}, $\ddbar^*,\nabla^{1,0*}$ are the formal adjoints of $\ddbar,\nabla^{1,0}$ with respect to $\langle\,\cdot\,,\,\cdot\,\rangle_{L^2(U,e^{-2t\phi})}$ respectively. From~\cite[(1.4.44)]{MM}, 
	
	\be\nonumber
	\square^{(U\times \C, e^{-2t\phi})}=\ov\square^{(U\times \C, e^{-2t\phi})}+[2\sqrt{-1}t\cL,\Lambda]+(\nabla^{1,0}\mT^*+\mT^*\nabla^{1,0})-(\ddbar\,\ov\mT^*+
	\ov\mT^*\,\ddbar).
	\ee
	Let $u, v\in\Omega^{\bullet,\bullet}_c(D)$. Let $s_1(z):=e^{t\phi(z)}\oh{u}(z,t)\in\Omega^{\bullet,\bullet}(U\times\mathbb R)$, 
	$s_2(z):=e^{t\phi(z)}\oh{v}(z,t)\in\Omega^{\bullet,\bullet}(U\times\mathbb R)$.
	Firstly, we have
	\begin{equation}\label{e-gue201025yydh}
	(1/2\pi)\int_{-\infty}^{\infty}\langle\square^{(U\times \C, e^{-2t\phi})}s_1,s_2\rangle_{L^2(U,e^{-2t\phi})}dt=(\,\ddbar_b u\,|\,\ddbar_bv\,)+(\,\ddbar_b^* u\,|\,\ddbar_b^*v\,).
	\end{equation}
	In fact, from Proposition~\ref{p-gue201025yyd}, 
	\begin{equation*}
	\begin{split}
	&\int_{-\infty}^{\infty}\langle\square^{(U\times \C, e^{-2t\phi})}s_1,s_2\rangle_{L^2(U,e^{-2t\phi})}dt\\
	&=\int_{-\infty}^{\infty}\Bigr(\langle\ddbar s_1,\ddbar s_2\rangle_{L^2(U,e^{-2t\phi})}+\langle\ddbar^* s_1,\ddbar^* s_2\rangle_{L^2(U,e^{-2t\phi})}\Bigr)dt\\
	&=\int_{-\infty}^{\infty}\Bigr(\langle \oh{ \ddbar_bu},\oh {\ddbar_bv}  \rangle_{L^2(U)}+\langle \oh{ \ddbar_b^*u},\oh {\ddbar_b^*v}  \rangle_{L^2(U)}\Bigr)dt\\
	&=2\pi (\ddbar_bu|\ddbar_bv)+2\pi (\ddbar_b^*u|\ddbar_b^*v).
	\end{split}
	\end{equation*}
Similarly, we have 
	\begin{equation}\label{e-gue201028yydII}
	(1/2\pi)\int_{-\infty}^{\infty}\langle\ov \square^{(U\times \C, e^{-2t\phi})}s_1,s_2\rangle_{L^2(U,e^{-2t\phi})}dt= (\dbar_bu|\dbar_bv)+(\dbar_b^*u|\dbar_b^*v).
	\end{equation}
	\comment{In fact, it follows from  
	\be
&&\int_{-\infty}^{\infty}\langle
\ov\square^{(U\times \C, e^{-2t\phi})}s_i,s_j\rangle_{L^2(U,e^{-2t\phi})}dt\\
&=&\int_{-\infty}^{\infty}\langle\nabla^{1,0} s_i,\nabla^{1,0} 
s_j\rangle_{L^2(U,e^{-2t\phi})}+
\langle\nabla^{1,0*} s_i,\nabla^{1,0*} s_j\rangle_{L^2(U,e^{-2t\phi})}dt\\
&=&\int_{-\infty}^{\infty}\langle \oh{ \dbar_b (\chi_i u)},
\oh {\dbar_b(\chi_j u)}  \rangle_{L^2(U)}+\langle \oh{ \dbar_b^* (\chi_i u)},\oh {\dbar_b^*(\chi_j u)}  
\rangle_{L^2(U)}dt\\
&=&2\pi (\dbar_b(\chi_i u)|\dbar_b(\chi_j u))+
2\pi (\dbar_b^*(\chi_i u)|\dbar_b^*(\chi_j u)).
\ee}	
Thirdly, we have
	\begin{equation}\label{e-gue201028yydIII}
	(1/2\pi)\int_{-\infty}^{\infty}t\langle [2\sqrt{-1}\cL,\Lambda]s_1,s_2\rangle_{L^2(U,e^{-2t\phi})}dt=([2\sqrt{-1}\cL,\Lambda](-\sqrt{-1}T)u| v).
	\end{equation}
	In fact, it follows from
	\begin{equation*}
	\begin{split}
	&\int_{-\infty}^{\infty}t\langle [2\sqrt{-1}\cL,\Lambda]s_1,s_2\rangle_{L^2(U,e^{-2t\phi})}dt\\
	&=\int_{-\infty}^{\infty}\langle t[2\sqrt{-1}\cL,\Lambda]\oh{u},\oh {v} \rangle_{L^2(U)}dt\\
	&=\int_{-\infty}^{\infty}\langle [2\sqrt{-1}\cL,\Lambda](-\oh{\sqrt{-1}Tu}),\oh{v}\rangle_{L^2(U)}dt\\
	&=2\pi ([2\sqrt{-1}\cL,\Lambda]({-\sqrt{-1}Tu })|{v}).
	\end{split}
	\end{equation*}
	
	Fourthly, we consider the rest terms
	\be
	\langle (\nabla^{1,0}\mT^*+\mT^*\nabla^{1,0}) s_1,s_2 \rangle_{L^2(U,e^{-2t\phi})},\quad
	\langle (\nabla^{0,1}\ov \mT^*+\ov \mT^*\nabla^{0,1}) s_1, s_2 \rangle_{L^2(U,e^{-2t\phi})}.
	\ee	
	By Proposition~\ref{p-gue201025yyd}, we have
	\be
	\begin{split}
	&\int_{-\infty}^{\infty}\langle  (\nabla^{1,0}\mT^*+\mT^*\nabla^{1,0})s_1,s_2\rangle_{L^2(U,e^{-2t\phi})}dt\\
	&=\int_{-\infty}^{\infty}\langle\Bigr(\nabla^{1,0}\mT^*s_1,s_2\rangle_{L^2(U,-2t\phi)}+\langle\mT^*\nabla^{1,0}s_1,s_2\rangle_{L^2(U,-2t\phi)}\Bigr)dt\\
	&=\int_{-\infty}^{\infty}\langle\Bigr(\nabla^{1,0}\mT^* e^{t\phi}\oh {u},e^{t\phi}\oh {v}\rangle_{L^2(U,-2t\phi)}+\langle\mT^*\nabla^{1,0}e^{t\phi}\oh {u},e^{t\phi}\oh {v}\rangle_{L^2(U,-2t\phi)}\Bigr)dt\\
	&=\int_{-\infty}^{\infty}\langle\Bigr(\mT^* e^{t\phi}\oh {u},\nabla^{1,0*}(e^{t\phi}\oh {v})\rangle_{L^2(U,-2t\phi)}+\langle\nabla^{1,0}(e^{t\phi}\oh {u}),\mT e^{t\phi}\oh {v}\rangle_{L^2(U,-2t\phi)}\Bigr)dt\\
	&=\int_{-\infty}^{\infty}\langle\Bigr(\mT^*\oh {u},e^{-t\phi}\nabla^{1,0*}(e^{t\phi}\oh {v})\rangle_{L^2(U)}+\langle e^{-t\phi}\nabla^{1,0}e^{t\phi}\oh {u},\mT \oh {v}\rangle_{L^2(U)}\Bigr)dt\\
	&=\int_{-\infty}^{\infty}\langle\Bigr(\mT^*\oh {u},\oh{\dbar_b^*v} \rangle_{L^2(U)}+\langle \oh{\dbar_bu}, \mT\oh{v} \rangle_{L^2(U)}\Bigr)dt\\
	&=2\pi(\mT^*u|\dbar_b^*v)+2\pi(\dbar_b u|\mT v).
		\end{split}
	\ee
	Thus we obtain
	\begin{equation}\label{e-gue201028yydIV}
	\begin{split}
	&(1/2\pi)\int_{-\infty}^{\infty}\langle(\nabla^{1,0}\mT^*+
	\mT^*\nabla^{1,0})s_1,s_2 \rangle_{L^2(U,e^{-2t\phi})}dt\\
	&=(\mT^* u|\dbar_b^* v)+(\dbar_b u|\mT v)\\
	&=((\dbar_b\mT^*+\mT^*\dbar_b)u|v).
	\end{split}
	\end{equation}	
	Similarly, we obtain
	 \begin{equation}\label{e-gue201028yydV}
	(1/2\pi)\int_{-\infty}^{\infty}\langle(\nabla^{0,1}\ov \mT^*+
	\ov\mT^*\nabla^{0,1})s_1,s_2 \rangle_{L^2(U,e^{-2t\phi})}dt=
	((\ddbar_b\ov{\mT}^*+\ddbar_b\ov{\mT}^*)u|v).
	\end{equation}	
From \eqref{e-gue201025yydh}, \eqref{e-gue201028yydII}, 
\eqref{e-gue201028yydIII}, \eqref{e-gue201028yydIV} and 
\eqref{e-gue201028yydV}, we get that for $u, v\in\Omega^{\bullet,\bullet}_c(D)$, 
	\be\\\nonumber
	(\square_b u|v)=((\ov \square_b+[2\sqrt{-1}\cL,\Lambda](-\sqrt{-1}T)+
	(\dbar_b\mT^*+\mT^*\dbar_b)-(\ddbar_b\ov \mT ^*+\ov\mT^*\ddbar_b))u|v).
	\ee
	The theorem follows. 
\end{proof}

\begin{cor}[CR Nakano's inequality I]\label{c-gue201028yyd}
	With the notations used above, 	 for any $u\in  \Omega_c^{\bullet,\bullet}(X)$,  
	\be    
	\begin{split}
	\frac{3}{2}(\square_b u|u)&\geq ([2\sqrt{-1}\cL,\Lambda](-\sqrt{-1}Tu)|u)\\
	&-\frac{1}{2}(\|\mT u\|^2+\|\mT^* u\|^2+\|\ov \mT u\|^2+\|\ov \mT^*u\|^2).
	\end{split}
	\ee
	If $(X,T^{1,0}X)$ is K\"{a}hler, i.e., $d\Theta_X=0$, then 
	\be
	(\square_b u|u)\geq ([2\sqrt{-1}\cL,\Lambda](-\sqrt{-1}Tu)|u).
	\ee
\end{cor}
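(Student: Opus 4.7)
The plan is to pair the Bochner-Kodaira-Nakano identity of Theorem~\ref{t-gue201028yyd} against $u\in\Omega^{\bullet,\bullet}_c(X)$ in the $L^2$-inner product and to control the torsion cross-terms by elementary Cauchy-Schwarz estimates of the form $|(a|b)|\leq \tfrac12\|a\|^2+\tfrac12\|b\|^2$.

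First, I would take $(\,\cdot\,|\,u\,)$ of \eqref{e-gue201025yydI} applied to $u$ and solve for the Levi curvature term, obtaining
\[
([2\sqrt{-1}\cL,\Lambda](-\sqrt{-1}Tu)|u) = (\square_b u|u) - (\ov\square_b u|u) - ((\dbar_b\mT^* + \mT^*\dbar_b)u|u) + ((\ddbar_b\ov\mT^* + \ov\mT^*\ddbar_b)u|u).
\]
Next, I would push adjoints in the two torsion pairings to rewrite
\[
((\dbar_b\mT^* + \mT^*\dbar_b)u|u) = (\mT^* u|\dbar_b^* u) + (\dbar_b u|\mT u),
\]
and analogously $((\ddbar_b\ov\mT^* + \ov\mT^*\ddbar_b)u|u) = (\ov\mT^* u|\ddbar_b^* u) + (\ddbar_b u|\ov\mT u)$. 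Applying the weighted Cauchy-Schwarz bound to each of the four resulting inner products, the $\dbar_b$ and $\dbar_b^*$ squared norms reassemble to $\tfrac12(\ov\square_b u|u)$, the $\ddbar_b$ and $\ddbar_b^*$ squared norms to $\tfrac12(\square_b u|u)$, and what is left over is $\tfrac12$ times each of $\|\mT u\|^2,\|\mT^*u\|^2,\|\ov\mT u\|^2,\|\ov\mT^*u\|^2$.

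Substituting these estimates gives
\[
([2\sqrt{-1}\cL,\Lambda](-\sqrt{-1}Tu)|u) \leq \tfrac32(\square_b u|u) - \tfrac12(\ov\square_b u|u) + \tfrac12\bigl(\|\mT u\|^2 + \|\mT^*u\|^2 + \|\ov\mT u\|^2 + \|\ov\mT^*u\|^2\bigr),
\]
and discarding the non-positive term $-\tfrac12(\ov\square_b u|u)$ yields the first claim. For the K\"ahler case $d\Theta_X=0$, I would observe that the $(1,2)$-component of $d\Theta_X$ is exactly $\dbar_b\Theta_X$, so $\dbar_b\Theta_X=0$, whence $\mT=[\Lambda,\dbar_b\Theta_X]$ and its conjugate/adjoint variants all vanish. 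The identity \eqref{e-gue201025yydI} then collapses to $\square_b = \ov\square_b + [2\sqrt{-1}\cL,\Lambda](-\sqrt{-1}T)$, and the inequality is immediate from $(\ov\square_b u|u)\geq 0$.

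The only delicate point is bookkeeping: tracking which Laplacian, $\square_b$ or $\ov\square_b$, absorbs which half of each Cauchy-Schwarz estimate, and keeping the signs correct so that the coefficient $\tfrac32$ on the left-hand side emerges. This factor comes from combining the single $(\square_b u|u)$ produced by the identity with the extra $\tfrac12(\square_b u|u)$ generated when estimating the $\ddbar_b$-torsion cross-terms. Beyond this accounting, no genuine obstacle is expected once Theorem~\ref{t-gue201028yyd} is in hand.
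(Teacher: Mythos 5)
Your proof is correct and follows essentially the same route as the paper, whose proof simply invokes Theorem~\ref{t-gue201028yyd} together with the Cauchy--Schwarz inequality; your bookkeeping of how the four torsion cross-terms distribute half of $(\ov\square_b u|u)$ and half of $(\square_b u|u)$ is exactly what produces the factor $\tfrac{3}{2}$, and dropping $-\tfrac12(\ov\square_b u|u)\leq 0$ finishes the first claim. (One cosmetic slip: $\dbar_b\Theta_X$ is the $(2,1)$-component of $d\Theta_X$, not the $(1,2)$-component, but the conclusion $\mT=0$ when $d\Theta_X=0$ is unaffected.)
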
  
\begin{proof}
	By Cauchy–Schwarz inequality, Theorem~\ref{t-gue201028yyd} and note that $\mT=0$, $\mT^*=0$ if $d\Theta_X=0$, we get the corollary. 
	\end{proof}

The following follows from straightforward calculation, we omit the proof 

\begin{prop}
	For a real $(1, 1)$-form $\sqrt{-1}\alpha\in \Omega^{1,1}(D)$, if we choose local orthonormal frame $\{\overline L_j\}_{j=1}^n$ of $T^{1,0}D$ with the dual frame $\{\overline e_j\}_{j=1}^n$ of $T^{*1,0}D$ such that $\sqrt{-1}\alpha=\sqrt{-1}\lambda_j(x)\overline e_j\wedge e_j$
	at a given point $x\in D$, then for any $f=\sum_{I,J}f_{IJ}(x)\overline e^I \wedge e^J \in \Omega^{\bullet,\bullet}(D)$, we
	have
	\begin{equation}\label{e-gue201028ycd}
	[\sqrt{-1}\alpha,\Lambda]f(x)=\sum_{I,J}\left(\sum_{j\in I} \lambda_j(x)+\sum_{j\in J}\lambda_j(x)-\sum_{j=1}^n\lambda_j(x)\right)f_{IJ}(x)\overline e^I \wedge e^J.
	\end{equation}
\end{prop}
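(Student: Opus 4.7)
\medskip

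\noindent\textbf{Proof proposal.} The identity \eqref{e-gue201028ycd} is purely pointwise and algebraic (of order zero in the sense that $[\sqrt{-1}\alpha,\Lambda]$ is a bundle endomorphism), so it suffices to verify it at the fixed point $x$ in the chosen adapted frame. The plan is to expand both $\sqrt{-1}\alpha$ and $\Lambda$ in the frame $\{\ov e_j,e_j\}$, reduce the commutator to a sum of one-index commutators via the canonical anticommutation relations, and then compute each piece by a short case analysis on a monomial $\ov e^I\wedge e^J$.

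First I would write, at the point $x$,
\[
\sqrt{-1}\alpha=\sqrt{-1}\sum_{j=1}^n\lambda_j(x)\,\ov e_j\wedge e_j\wedge\cdot,\qquad
\Lambda=-\sqrt{-1}\sum_{k=1}^n i_{L_k}i_{\ov L_k},
\]
as recalled in the excerpt. Denote $A_j=\ov e_j\wedge$, $B_j=e_j\wedge$, $A_j^{*}=i_{\ov L_j}$, $B_j^{*}=i_{L_j}$. Since $\{\ov e_j\}$, $\{e_j\}$ are dual orthonormal frames for $T^{*1,0}$, $T^{*0,1}$, the only nontrivial anticommutators are $\{A_j,A_k^{*}\}=\delta_{jk}=\{B_j,B_k^{*}\}$, while all other pairs anticommute. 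Consequently
\[
[\sqrt{-1}\alpha,\Lambda]=\sum_{j,k}\lambda_j(x)\,[A_jB_j,\,B_k^{*}A_k^{*}].
\]
For $k\neq j$, both $A_jB_j$ and $B_k^{*}A_k^{*}$ are products of two fermionic operators that each anticommute with every factor in the other product; hence $A_jB_j$ and $B_k^{*}A_k^{*}$ commute, and these cross terms drop out. This reduces the problem to computing the diagonal commutator $[A_jB_j,B_j^{*}A_j^{*}]$ for each $j$.

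Next I would evaluate $[A_jB_j,B_j^{*}A_j^{*}]$ on a monomial $\phi=\ov e^I\wedge e^J$ by splitting into four cases:
\begin{itemize}
\item[(a)] $j\in I$ and $j\in J$: then $A_jB_j\phi=0$, while $B_j^{*}A_j^{*}$ strips off $\ov e_j$ and $e_j$ and $A_jB_j$ restores them, yielding $+\phi$;
\item[(b)] $j\notin I$ and $j\notin J$: then $B_j^{*}A_j^{*}\phi=0$, while $A_jB_j$ inserts $\ov e_j\wedge e_j$ and $B_j^{*}A_j^{*}$ then removes it, yielding $-\phi$;
\item[(c)] $j\in I,\,j\notin J$ and (d) $j\notin I,\,j\in J$: in each of these mixed cases both $A_jB_j\phi$ and $B_j^{*}A_j^{*}\phi$ vanish, giving $0$.
\end{itemize}
Combining, $[A_jB_j,B_j^{*}A_j^{*}]\phi=(\mathbf 1_{j\in I}+\mathbf 1_{j\in J}-1)\phi$. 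Multiplying by $\lambda_j(x)$ and summing over $j$ yields
\[
[\sqrt{-1}\alpha,\Lambda]\phi=\Bigl(\sum_{j\in I}\lambda_j(x)+\sum_{j\in J}\lambda_j(x)-\sum_{j=1}^n\lambda_j(x)\Bigr)\phi,
\]
which is exactly \eqref{e-gue201028ycd} for $\phi=\ov e^I\wedge e^J$; linearity extends it to general $f\in\Omega^{\bullet,\bullet}(D)$.

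The only real subtlety is sign bookkeeping in cases (a) and (b), where one has to verify that reinserting $\ov e_j$ and $e_j$ in the correct order reproduces $\phi$ with coefficient $+1$ (respectively $-1$ after the second term in the commutator); this is a direct check using the anticommutation relations above. Everything else is formal algebra, so I do not anticipate any serious obstacle; the proposition is essentially the standard pointwise Lefschetz decomposition formula adapted to our $(\overline e_j,e_j)$ conventions.
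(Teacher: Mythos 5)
Your proof is correct and fills in exactly the ``straightforward calculation'' that the paper omits: the reduction via the canonical anticommutation relations to the diagonal commutators $[A_jB_j,B_j^*A_j^*]=A_jA_j^*-B_j^*B_j$, and the resulting coefficient $\mathbf 1_{j\in I}+\mathbf 1_{j\in J}-1$ on each monomial, all check out (this is the standard pointwise computation as in \cite[Lemma 1.4.4]{MM}). No gaps.
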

   
\begin{cor}\label{vanish_nq_1}
	With the notations used above, let $\Theta_X$ be a Hermitian metric on $X$ such that	
	\be
	2\sqrt{-1}\cL=\Theta_X. 
	\ee 
	Then for any $u\in \Omega_c^{n,q}(X)$ with $1\leq q\leq n$,
	\be
	\left(-\sqrt{-1}Tu|u\right)\leq \frac{1}{q}\left(\|\ddbar_b u\|^2+\|\ddbar^*_b u\|^2\right).
	\ee
\end{cor}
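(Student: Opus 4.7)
The plan is to apply the K\"ahler case of Corollary \ref{c-gue201028yyd} and then compute the curvature operator $[2\sqrt{-1}\cL,\Lambda]$ explicitly on $(n,q)$-forms via \eqref{e-gue201028ycd}.

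First, I would verify that the hypothesis $2\sqrt{-1}\cL=\Theta_X$ reduces us to the K\"ahler case. On any BRT chart $D=U\times I$, the Levi form satisfies $\cL=\dbar\ddbar\phi|_{T^{1,0}X}$, so the induced $(1,1)$-form on $U$ is $\Theta=2\sqrt{-1}\dbar\ddbar\phi$, which is manifestly $d$-closed. Consequently the Hermitian torsion $\mT=[\Lambda,\dbar\Theta]$ vanishes on each BRT chart, and hence $\mT=\ov{\mT}=0$ globally as operators on $\Omega^{\bullet,\bullet}(X)$. Corollary \ref{c-gue201028yyd} therefore yields, for all $u\in\Omega_c^{n,q}(X)$,
\[
(\square_b u\,|\,u)\geq ([2\sqrt{-1}\cL,\Lambda](-\sqrt{-1}Tu)\,|\,u).
\]

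Second, I would evaluate $[2\sqrt{-1}\cL,\Lambda]=[\Theta_X,\Lambda]$ on $(n,q)$-forms. Choose at each point a local orthonormal frame $\{\ov L_j\}_{j=1}^n$ of $T^{1,0}X$ with respect to $\Theta_X$; in \eqref{e-gue201028ycd} the eigenvalues $\lambda_j$ are then all equal to $1$. For an $(n,q)$-form $u=\sum_{I,J}u_{IJ}\,\ov e^I\wedge e^J$ the increasingly ordered index $I$ is forced to equal $\{1,\ldots,n\}$, so $[2\sqrt{-1}\cL,\Lambda]u=(n+q-n)u=q\,u$. Since the $\R$-action is CR, $Tu$ remains in bidegree $(n,q)$, whence $[2\sqrt{-1}\cL,\Lambda](-\sqrt{-1}Tu)=q(-\sqrt{-1}Tu)$.

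Third, combining the two steps with the identity $(\square_b u\,|\,u)=\|\ddbar_b u\|^2+\|\ddbar_b^* u\|^2$ gives $q(-\sqrt{-1}Tu\,|\,u)\leq \|\ddbar_b u\|^2+\|\ddbar_b^* u\|^2$, and dividing by $q\geq 1$ completes the argument. There is no serious obstacle here: once one observes that $2\sqrt{-1}\cL=\Theta_X$ automatically makes $\Theta$ K\"ahler on every BRT chart, the estimate reduces to a direct specialization of the CR Nakano inequality together with an elementary eigenvalue computation and the fact that $T$ preserves the CR bidegree.
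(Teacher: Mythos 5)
Your proposal is correct and follows essentially the same route as the paper: apply the K\"ahler case of Corollary \ref{c-gue201028yyd} (noting $d\Theta_X=d(2\sqrt{-1}\cL)=0$, hence $\mT=0$), and evaluate $[2\sqrt{-1}\cL,\Lambda]$ on $(n,q)$-forms via \eqref{e-gue201028ycd} with all $\lambda_j=1$ to get the factor $q$. Your BRT-chart verification that $2\sqrt{-1}\cL=2\sqrt{-1}\dbar\ddbar\phi$ is $d$-closed just spells out the paper's one-line observation.
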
 
\begin{proof}  
	By applying \eqref{e-gue201028ycd} for $\sqrt{-1}\alpha:=2\sqrt{-1}\cL$, $\lambda_j=1$ for all $j$, we have 
	\be
	[2\sqrt{-1}\cL,\Lambda](-\sqrt{-1}T u)=q(-\sqrt{-1}Tu),\ \ \mbox{for all $u\in\Omega^{n,q}_c(X)$}. 
	\ee
	 By $d\Theta_X=d(2\sqrt{-1}\cL)=0$ and Corollary~\ref{c-gue201028yyd}, we obtain
	\be
	(\square_b u|u)\geq ([2\sqrt{-1}\cL,\Lambda](-\sqrt{-1}Tu)|u)=q(-\sqrt{-1}Tu|u).
	\ee
\end{proof} 

Let $E$ be a CR line bundle over $X$ (see Definition 2.4 in~\cite{HHL17}. 
We say that $E$ is a $\mathbb R$-equivariant CR line bundle over $X$ if the $\mathbb R$-action on $X$ can be CR lifted to $E$ and for every point $x\in X$, we can find a $T$-invariant local CR trivializing section of $E$ defined near $x$ (see Definition 2.9 and Definition 2.6 in~\cite{HML17}). Here we also use $T$ to denote the vector field acting on sections of $E$ induced by the $\mathbb R$-action on $E$. From  now on, we assume that $E$ is a $\mathbb R$-equivariant CR line bundle over $X$ with a $\mathbb R$-invariant Hermitian metric $h^E$ on $E$. For $p, q\in\mathbb N_0$, let $\Omega^{p,q}(X,E)$ be the space of smooth $(p,q)$ forms of $X$ with values in $E$ and 
let $\Omega^{\bullet,\bullet}(X,E):=\oplus_{p,q\in\mathbb N_0}\Omega^{p,q}(X,E)$. Let $\Omega^{p,q}_c(X,E)$ be the subspace of $\Omega^{p,q}(X,E)$ whose elements have compact support in $X$ and let $\Omega^{\bullet,\bullet}_c(X,E):=\oplus_{p,q\in\mathbb N_0}\Omega^{p,q}_c(X,E)$. For $p, q\in\mathbb N_0$, let 
\[\ddbar_{b,E}: \Omega^{p,q}(X,E)\rightarrow\Omega^{p,q+1}(X,E)\]
be the tangential Cauchy-Riemann operator with values in $E$. Let $(\,\cdot\,|\,\cdot\,)_E$ be the $L^2$ inner product on $\Omega^{\bullet,\bullet}_c(X,E)$ induced by 
$\langle\,\cdot\,|\,\cdot\,\rangle$ and $h^E$. Let 
\[\ddbar^*_{b,E}: \Omega^{p,q+1}(X,E)\rightarrow\Omega^{p,q}(X,E)\]
be the formal adjoint of $\ddbar_{b,E}$ with respect to $(\,\cdot\,|\,\cdot\,)_E$. Put 
\[\Box_{b,E}:=\ddbar_{b,E}\,\ddbar^*_{b,E}+\ddbar^*_{b,E}\,\ddbar_{b,E}:\Omega^{\bullet,\bullet}(X,E)\rightarrow\Omega^{\bullet,\bullet}(X,E).\]
Let 
\begin{equation}\label{e-gue201031yyd}
\nabla^E: \Omega^{\bullet,\bullet}(X,E)\rightarrow\Omega^{\bullet,\bullet}(X,E\otimes\mathbb CT^*X)
\end{equation}
be the connection on $E$ induced by $h^E$ given as follows: Let $s$ be a $T$-invariant local CR trivializing section of $E$ on an open set $D$ of $X$, 
\begin{equation}\label{e-gue201031ycd}
\abs{s}^2_{h^E}=e^{-2\Phi},\ \ \Phi\in\cC^\infty(D,\mathbb R). 
\end{equation}
Then, 
{\begin{equation}\label{e-gue201031yydI}
\nabla^E(u\otimes s):=(\ddbar_bu+\dbar_bu-2(\dbar\Phi)\wedge u+\omega_0\wedge(Tu))\otimes s, \ \ u\in\Omega^{\bullet,\bullet}(D). 
\end{equation}}
It is straightforward to check that \eqref{e-gue201031yydI} is independent of the choices of $T$-invariant local CR trivializing sections $s$ and hence is globally defined. 
Put 
\begin{equation}\label{e-gue201031yydII}
(\nabla^E)^{0,1}:=\ddbar_b,\ \ (\nabla^E)^{1,0}:=\dbar_b-2\dbar_b\Phi. 
\end{equation}
Let 
\begin{equation}\label{e-gue201031yydIII}
\overline\Box_{b,E}:=(\nabla^E)^{1,0}((\nabla^E)^{1,0})^*+((\nabla^E)^{1,0})^*(\nabla^E)^{1,0}: \Omega^{\bullet,\bullet}(X,E)\rightarrow\Omega^{\bullet,\bullet}(X,E).
\end{equation}
Let $R^E\in\Omega^{1,1}(X)$ be the curvature of $E$ induced by $h^E$ given by $R^E:=-2\ddbar_b\dbar_b\Phi$ on $D$, where $\Phi$ is as in \eqref{e-gue201031ycd}. 
Let $D=U\times I$ be a BRT chart. Since $E$ is $\mathbb R$-equivariant, on $D$, $E$ is a holomorphic line bundle over $U$. We can repeat the proof of Theorem~\ref{t-gue201028yyd} with minor changes and conclude 

\begin{thm}\label{t-gue201031yyd}
Let $E$ be a $\mathbb R$-equivariant CR line bundle over $X$ with a $\mathbb R$-invariant Hermitian metric $h^E$. With the notations used above, we have on $\Omega^{\bullet,\bullet}(X,E)$, 
\begin{equation}\label{e-gue201031ycdi}
\Box_{b,E}=\overline\Box_{b,E}+[2\sqrt{-1}\cL,\Lambda](-\sqrt{-1}T)+[\sqrt{-1}R^E, \Lambda]+\Bigr((\nabla^E)^{1,0}\mT^*+\mT^*(\nabla^E)^{1,0}\Bigr)-\Bigr(\ddbar_{b,E}\overline\mT^*+\overline\mT^*\ddbar_{b,E}\Bigr),
\end{equation}
where $R^E\in\Omega^{1,1}(X)$ is the curvature of $E$ induced by $h^E$. 
\end{thm}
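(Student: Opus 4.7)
The plan is to imitate the proof of Theorem~\ref{t-gue201028yyd}, keeping track of the extra Hermitian bundle $E$. Since both sides of \eqref{e-gue201031ycdi} are globally defined operators on $\Omega^{\bullet,\bullet}(X,E)$, it suffices to check the identity on a BRT chart $D=U\times I$. Since $E$ is $\R$-equivariant, we may fix a $T$-invariant local CR trivializing section $s$ of $E|_D$ with $\abs{s}^2_{h^E}=e^{-2\Phi}$ and $\Phi\in\cC^\infty(U,\R)$ independent of $x_{2n+1}$. Under this trivialization $E|_D$ is the trivial line bundle over $U$ equipped with the $x_{2n+1}$-independent weight $e^{-2\Phi}$, which makes it a holomorphic line bundle on $U$.

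For $u,v\in\Omega^{\bullet,\bullet}_c(D)$, take the Fourier transform in $x_{2n+1}$ and put
\[
s_1(z,t):=e^{t\phi(z)}\widehat{u}(z,t), \qquad s_2(z,t):=e^{t\phi(z)}\widehat{v}(z,t),
\]
so that $s_1(\cdot,t),\,s_2(\cdot,t)\in\Omega^{\bullet,\bullet}_c(U)$ for each $t\in\R$. I expect the analogue of Proposition~\ref{p-gue201025yyd} to give
\[
\widehat{\ddbar_{b,E}(u\otimes s)}=\bigl(e^{-t\phi}\ddbar(e^{t\phi}\widehat{u})\bigr)\otimes s,\qquad \widehat{(\nabla^E)^{1,0}(u\otimes s)}=\bigl(e^{-t\phi}\nabla^{1,0}_\Phi(e^{t\phi}\widehat{u})\bigr)\otimes s,
\]
where $\nabla^{1,0}_\Phi=\dbar-2\dbar\Phi$ is the $(1,0)$-part of the Chern connection of the trivial line bundle over $U$ with metric $e^{-2\Phi}$. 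Thus, via Fourier transform, the operators $\ddbar_{b,E}$ and $(\nabla^E)^{1,0}$ correspond to the $(0,1)$- and $(1,0)$-parts of the Chern connection for the trivial line bundle on $U$ endowed with the combined weight $e^{-2(\Phi+t\phi)}$. Its curvature is $\sqrt{-1}R^E+2t\sqrt{-1}\cL$ under the identification used in the untwisted case.

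Now apply the standard Bochner-Kodaira-Nakano formula \cite[(1.4.44)]{MM} on $U$ for this combined weight, yielding a curvature commutator
\[
[\sqrt{-1}R^E+2t\sqrt{-1}\cL,\Lambda]=[\sqrt{-1}R^E,\Lambda]+2t[\sqrt{-1}\cL,\Lambda].
\]
Following the computations \eqref{e-gue201028yydIII}--\eqref{e-gue201028yydV} verbatim, the pairings $\langle\cdot,s_2\rangle_{L^2(U,e^{-2(\Phi+t\phi)})}$ integrated in $t$ and combined via Parseval convert the $t$-factor in the second summand into the operator $-\sqrt{-1}T$, producing the term $[2\sqrt{-1}\cL,\Lambda](-\sqrt{-1}T)$, while the first summand, being $t$-independent, produces $[\sqrt{-1}R^E,\Lambda]$. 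The torsion terms $\nabla^{1,0}\mT^*+\mT^*\nabla^{1,0}$ and $\ddbar\,\overline\mT^*+\overline\mT^*\ddbar$ on $U$ transform by the same integration-by-parts argument as in the proof of Theorem~\ref{t-gue201028yyd}, producing on $X$ the $E$-twisted expressions $(\nabla^E)^{1,0}\mT^*+\mT^*(\nabla^E)^{1,0}$ and $\ddbar_{b,E}\overline\mT^*+\overline\mT^*\ddbar_{b,E}$, because $\mT$ is a zeroth-order operator depending only on $\Theta_X$.

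The main obstacle is the careful bookkeeping of the Fourier-transform correspondence for the twisted operators, in particular verifying the formulas for $\widehat{(\nabla^E)^{1,0}(u\otimes s)}$ and for the adjoints $\ddbar_{b,E}^*$, $((\nabla^E)^{1,0})^*$ in the presence of both the $\R$-invariant weight $e^{-2\Phi}$ and the Fourier weight $e^{-2t\phi}$, and checking that the resulting local identity is in fact independent of the choice of $T$-invariant trivializing section $s$. Once these verifications are in place, equating $(\Box_{b,E}(u\otimes s)|v\otimes s)_E$ with the pairing of $(u\otimes s)$ against each term on the right of \eqref{e-gue201031ycdi} for arbitrary $u,v\in\Omega^{\bullet,\bullet}_c(D)$ yields the claim.
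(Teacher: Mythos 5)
Your proposal follows exactly the route the paper takes: the paper's proof of this theorem is literally ``repeat the proof of Theorem~\ref{t-gue201028yyd} with minor changes,'' using that on a BRT chart a $T$-invariant trivialization turns $E$ into a holomorphic line bundle over $U$ with $x_{2n+1}$-independent weight $e^{-2\Phi}$, so that the Fourier transform reduces everything to the Bochner--Kodaira--Nakano formula \cite[(1.4.44)]{MM} for the combined weight $e^{-2(\Phi+t\phi)}$ with curvature $\sqrt{-1}R^E+2t\sqrt{-1}\cL$, the factor $t$ converting to $-\sqrt{-1}T$ via Parseval. One small correction: your displayed formula for $\widehat{(\nabla^E)^{1,0}(u\otimes s)}$ should use the $(1,0)$-part of the Chern connection for the \emph{combined} weight, namely $\dbar-2\dbar(\Phi+t\phi)$, rather than $\nabla^{1,0}_\Phi=\dbar-2\dbar\Phi$; your very next sentence already states the correct identification, and with that fix the argument is the paper's.
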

  
\subsection{CR Bochner-Kodaira–Nakano formula II} 

The bundle $K^*_X:=\det(T^{1,0}X)$ is a $\mathbb R$-equivariant CR line bundle over $X$. The $(1,1)$ form $\Theta_X$ induces a $\mathbb R$-invariant Hermitian metric 
$h^{K^*_X}$ on $K^*_X$. Let $R^{K^*_X}$ be the curvature of $K^*_X$ induced by $h^{K^*_X}$. Let \[\Psi: T^{*0,q}X\rightarrow T^{*n,q}X\otimes K^*_X\]
be the natural isometry defined as follows: Let $D=U\times I$ be a BRT chart. Let $\{\overline L_j\}^n_{j=1}\subset T^{1,0}D$, $\{\overline e_j\}^n_{j=1}\subset T^{*1,0}D$ be as in the discussion after \eqref{e-gue201025yyd}. Then, 
\[\Psi u:=\overline e_1\wedge\ldots\wedge\overline e_n\wedge u\otimes(\overline L_1\wedge\ldots\wedge\overline L_n)\in T^{*n,q}X\otimes K^*_X,\ \ u\in T^{*0,q}X.\]
It is easy to see that the definition above is independent of the choices of $\mathbb R$-invariant orthonormal frame $\{\overline L_j\}^n_{j=1}\subset T^{1,0}D$ and hence is globally defined. We have the isometry:
\[\Psi: \Omega^{0,q}(X)\rightarrow\Omega^{n,q}(X,K^*_X).\]
Moreover, it is straightforward to see that 
\begin{equation}\label{e-gue201101yyd}
\ddbar_bu=\Psi^{-1}\ddbar_{b,K^*_X}\Psi u,\ \ \ddbar^*_bu=\Psi^{-1}\ddbar^*_{b,K^*_X}\Psi u,\ \ 
\Box_bu=\Psi^{-1}\Box_{b,K^*_X}\Psi u,\ \ \mbox{for every $u\in\Omega^{0,q}(X)$}. 
\end{equation}
We can now prove 

\begin{thm}\label{t-gue201101yyd}
With the notations used above, we have on $\Omega^{0,\bullet}(X)$, 
	\begin{equation}\label{e-gue201104yyd}
	\begin{split}
\square_b&=\Psi^{-1}\Box_{b,K^*_X}\Psi+
2\cL(\ov L_j,L_k)e_k\wedge i_{L_j}(-\sqrt{-1}T)
+R^{K_X^*}(\ov L_j,L_k)e_k\wedge i_{L_j}\\
&\quad+\Psi^{-1}(\nabla^{K^*_X})^{1,0} 
\mT^*\Psi-\Bigr(\ddbar_b\Psi^{-1}\ov \mT^*\Psi+
\Psi^{-1}\ov \mT^*\Psi\ddbar_b\Bigr),
\end{split}
\end{equation}
where
$\{\overline L_j\}_{j=1}^n$ is a local $\mathbb R$-invariant 
orthonormal frame of $T^{1,0}X$ with dual frame 
$\{\overline e_j \}_{j=1}^n\subset T^{*1,0}X$. 
	\end{thm}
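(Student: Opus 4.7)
The plan is to derive this identity as the push-forward under the isometry $\Psi\colon\Omega^{0,\bullet}(X)\to\Omega^{n,\bullet}(X,K^*_X)$ of the CR Bochner--Kodaira--Nakano formula of Theorem~\ref{t-gue201031yyd} applied to the $\R$-equivariant CR line bundle $E=K^*_X$ endowed with the $\R$-invariant Hermitian metric $h^{K^*_X}$ induced by $\Theta_X$.

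First I would write out Theorem~\ref{t-gue201031yyd} for $E=K^*_X$ on $\Omega^{\bullet,\bullet}(X,K^*_X)$, then conjugate both sides by $\Psi^{-1}(\cdot)\Psi$ and evaluate on $u\in\Omega^{0,q}(X)$. The intertwining identities in \eqref{e-gue201101yyd}, namely $\square_b=\Psi^{-1}\Box_{b,K^*_X}\Psi$ and $\ddbar_b=\Psi^{-1}\ddbar_{b,K^*_X}\Psi$, convert the left-hand side into $\square_b u$ and allow me to pull $\Psi$'s past $\ddbar_{b,K^*_X}$ inside the $\ov\mT^*$ block, producing the symmetric combination $\ddbar_b\Psi^{-1}\ov\mT^*\Psi+\Psi^{-1}\ov\mT^*\Psi\,\ddbar_b$ displayed in the theorem.

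Next I would rewrite each of the two curvature commutators $[2\sqrt{-1}\cL,\Lambda]$ and $[\sqrt{-1}R^{K^*_X},\Lambda]$ acting on $\Psi u$ as the simpler endomorphisms $2\cL(\ov L_j,L_k)\,e_k\wedge i_{L_j}$ and $R^{K^*_X}(\ov L_j,L_k)\,e_k\wedge i_{L_j}$ of $u\in\Omega^{0,q}(X)$. The key observation is that $\Psi u\in\Omega^{n,q}(X,K^*_X)$ carries the maximal possible $(1,0)$-degree $n$: in the pointwise formula \eqref{e-gue201028ycd} applied in a local frame in which the chosen real $(1,1)$-form is diagonalised, one has $I=\{1,\ldots,n\}$, so the contribution $\sum_{j\in I}\lambda_j$ exactly cancels the trace $\sum_{j=1}^n\lambda_j$, leaving only the single sum $\sum_{j\in J}\lambda_j$. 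Recasting this in an arbitrary local $\R$-invariant orthonormal frame of $T^{1,0}X$ gives the Hermitian-matrix expressions above, and multiplying by $-\sqrt{-1}T$ recovers the Nakano-type Levi term exactly as stated.

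Finally, I would dispose of the remaining torsion pair $(\nabla^{K^*_X})^{1,0}\mT^*+\mT^*(\nabla^{K^*_X})^{1,0}$ by the same bidegree argument: $(\nabla^{K^*_X})^{1,0}$ strictly raises the $(1,0)$-degree, yet $\Psi u$ is already of full $(1,0)$-degree $n$, so $(\nabla^{K^*_X})^{1,0}\Psi u=0$ and the second summand vanishes identically on the image of $\Psi$; the first summand survives and pulls back under $\Psi^{-1}$ to the asymmetric term $\Psi^{-1}(\nabla^{K^*_X})^{1,0}\mT^*\Psi$ in the statement. The main obstacle I anticipate is the bookkeeping in the curvature step, namely checking that the frame-invariant expressions $\cL(\ov L_j,L_k)\,e_k\wedge i_{L_j}$ and $R^{K^*_X}(\ov L_j,L_k)\,e_k\wedge i_{L_j}$ really coincide with the conjugations of $[2\sqrt{-1}\cL,\Lambda]$ and $[\sqrt{-1}R^{K^*_X},\Lambda]$ beyond the diagonalising frame, and that the $\R$-invariance of the orthonormal frame keeps the operator $-\sqrt{-1}T$ intact under conjugation by $\Psi$.
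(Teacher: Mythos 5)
Your proposal is correct and follows essentially the same route as the paper: apply Theorem~\ref{t-gue201031yyd} to $E=K^*_X$, conjugate by $\Psi$ using the intertwining relations \eqref{e-gue201101yyd}, and exploit that $\Psi u$ has maximal $(1,0)$-degree $n$ both to reduce the curvature commutators to the endomorphisms $2\cL(\ov L_j,L_k)e_k\wedge i_{L_j}$ and $R^{K^*_X}(\ov L_j,L_k)e_k\wedge i_{L_j}$ and to kill $\mT^*(\nabla^{K^*_X})^{1,0}$ on $\Omega^{n,q}(X,K^*_X)$, with the $\R$-invariance of the frame ensuring $T$ commutes with $\Psi$. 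The only cosmetic difference is that you simplify $[\sqrt{-1}\alpha,\Lambda]$ on $(n,q)$-forms via the diagonalizing-frame formula \eqref{e-gue201028ycd}, whereas the paper states the frame-covariant identity $[2\sqrt{-1}\cL,\Lambda]=2\cL(\ov L_j,L_k)(\ov e_j\wedge i_{\ov L_k}-i_{L_j}e_k\wedge)$ and observes that it reduces to $e_k\wedge i_{L_j}$ on forms of top holomorphic degree.
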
 	

\begin{proof}
Let $u\in\Omega^{0,q}(X)$. From \eqref{e-gue201101yyd} and 
\eqref{e-gue201031ycdi}, we have 
\begin{equation}\label{e-gue201105yyd}
\begin{split}
&\Box_bu=\Psi^{-1}\Box_{b,K^*_X}\Psi u\\
&=\Psi^{-1}\overline\Box_{b,K^*_X}\Psi u+
\Psi^{-1}[2\sqrt{-1}\cL,\Lambda](-\sqrt{-1}T)(\Psi u)+
\Psi^{-1}[\sqrt{-1}R^{K^*_X}, \Lambda]\Psi u\\
&\quad+\Psi^{-1}\Bigr((\nabla^{K^*_X})^{1,0}\mT^*+
\mT^*(\nabla^{K^*_X})^{1,0}\Bigr)\Psi u-
\Bigr(\ddbar_b\Psi^{-1}\overline\mT^*\Psi u+
\Psi^{-1}\overline\mT^*\Psi\ddbar_bu\Bigr).
\end{split}
\end{equation}
It is straightforward to check that 
\begin{equation}\label{e-gue201105yydI}
\begin{split}
&[2\sqrt{-1}\cL,\Lambda]=2\cL(\overline L_j,L_k)(\overline e_j\wedge i_{\overline L_k}-i_{L_j}e_k\wedge),\\
&[2\sqrt{-1}R^{K^*_X},\Lambda]=R^{K^*_X}(\overline L_j,L_k)(\overline e_j\wedge i_{\overline L_k}-i_{L_j}e_k\wedge).
\end{split}
\end{equation}
From \eqref{e-gue201105yyd}, \eqref{e-gue201105yydI} and notice that $(\overline e_j\wedge i_{\overline L_k}-i_{L_j}e_k\wedge)v=e_k\wedge i_{L_j}v$, 
$\mT^*(\nabla^{K^*_X})^{1,0}v=0$, for every $v\in\Omega^{n,q}(X,K^*_X)$ and $T$ commutes with $\Psi$, we get \eqref{e-gue201104yyd}. 
\end{proof}

\begin{cor} \label{vanish_0q_1}
With the notations used above, assume that $2\sqrt{-1}\cL=\Theta_X$ 
and there is $C>0$ such that 
\[\sqrt{-1}R^{K_X^*}\geq -C\Theta_X\ \ \mbox{on $X$}.\]
Then, for any $u\in \Omega_c^{0,q}(X)$ with $1\leq q\leq n$, we have
\begin{equation}\label{e-gue201108ycda}
\left(-\sqrt{-1}Tu|u\right)\leq \frac{1}{q}\left(\|\ddbar_b u\|^2+
\|\ddbar_b^* u\|^2\right)+C\|u\|^2.
\end{equation}
\end{cor}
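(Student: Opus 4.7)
The plan is to apply the CR Bochner--Kodaira--Nakano formula of Theorem \ref{t-gue201101yyd} to $u\in\Omega_c^{0,q}(X)$, drop the nonnegative piece $\Psi^{-1}\ov{\square}_{b,K^*_X}\Psi$ on the right-hand side of \eqref{e-gue201104yyd}, verify that the torsion terms vanish under the hypothesis $\Theta_X=2\sqrt{-1}\cL$, and estimate the remaining Levi and curvature contributions from below. The torsion vanishes because in any BRT chart $\Theta_X=2\sqrt{-1}\dbar\ddbar\phi$ is closed, so $d\Theta_X=0$; hence $\mT=[\Lambda,\dbar_b\Theta_X]=0$ together with its adjoint and conjugates, and all four torsion terms in \eqref{e-gue201104yyd} disappear.

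For the surviving terms I would work, at the given point, in an $\R$-invariant local orthonormal frame $\{\ov L_j\}_{j=1}^n$ of $T^{1,0}X$ that simultaneously diagonalizes $\sqrt{-1}R^{K^*_X}=\sum_j\lambda_j\sqrt{-1}\ov e_j\wedge e_j$. Since $\Theta_X=2\sqrt{-1}\cL$, we have $2\cL(\ov L_j,L_k)=\delta_{jk}$, so the Levi term in \eqref{e-gue201104yyd} becomes $(-\sqrt{-1}T)\sum_j e_j\wedge i_{L_j}$; the number operator $\sum_j e_j\wedge i_{L_j}$ acts as $q\,\mathrm{Id}$ on $(0,q)$-forms, producing $q(-\sqrt{-1}Tu)$. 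For the curvature term, $R^{K^*_X}(\ov L_j,L_k)=\lambda_j\delta_{jk}$ in the same frame, and a short calculation parallel in spirit to \eqref{e-gue201028ycd} shows that $R^{K^*_X}(\ov L_j,L_k)e_k\wedge i_{L_j}$ acts on $u=\sum_J f_J e^J$ by multiplication by $\sum_{j\in J}\lambda_j$ on the component indexed by $J$. The hypothesis $\sqrt{-1}R^{K^*_X}\geq-C\Theta_X$ gives $\lambda_j\geq-C$, so this operator is bounded below pointwise by $-Cq\,\mathrm{Id}$ on $(0,q)$-forms.

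Taking the $L^2$ inner product of \eqref{e-gue201104yyd} with $u$ and combining the nonnegativity of $\Psi^{-1}\ov{\square}_{b,K^*_X}\Psi$ with the pointwise estimates above yields
$$(\square_b u|u)\geq q(-\sqrt{-1}Tu|u)-Cq\|u\|^2,$$
and since $(\square_b u|u)=\|\ddbar_b u\|^2+\|\ddbar_b^* u\|^2$, dividing by $q\geq 1$ produces \eqref{e-gue201108ycda}. The main technical point is the bookkeeping of the curvature contribution: the operator $R^{K^*_X}(\ov L_j,L_k)e_k\wedge i_{L_j}$ is \emph{not} the full commutator $[\sqrt{-1}R^{K^*_X},\Lambda]$ on $(0,q)$-forms (the latter would include the trace $-\sum_j\lambda_j$), so the pointwise lower bound must be obtained directly in the simultaneously diagonalizing frame rather than quoted from the usual K\"ahler Bochner literature. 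Once this is done, the argument is a direct $(0,q)$-form analogue of Corollary \ref{vanish_nq_1}.
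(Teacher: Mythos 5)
Your proposal is correct and follows essentially the same route as the paper's proof: apply the CR Bochner--Kodaira--Nakano formula II of Theorem \ref{t-gue201101yyd}, note that $d\Theta_X=d(2\sqrt{-1}\cL)=0$ kills the torsion terms, identify the Levi term with $q(-\sqrt{-1}T)$ in a frame with $2\cL(\ov L_j,L_k)=\delta_{jk}$, bound the curvature term pointwise below by $-Cq$, and drop the nonnegative $\ov\square_{b,K^*_X}$-contribution. Your remark that the curvature operator $R^{K^*_X}(\ov L_j,L_k)e_k\wedge i_{L_j}$ is not the full commutator and must be estimated directly in a diagonalizing frame is exactly the bookkeeping the paper performs in \eqref{e-gue201106ycdi}.
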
 
\begin{proof} 
	Since $2\sqrt{-1}\cL=\Theta_X$, we can choose $\mathbb R$-invariant orthonormal frame $\{\overline L_j\}^n_{j=1}$ 
	such that $2\cL(\ov L_j,L_k)=\delta_{jk}$, for every $j, k$. We write $u=\sum_{J}u_Je_J$ on $D$ with $u_J\in\cC^\infty(D)$ and $e_J=e_{j_1}\wedge\ldots\wedge e_{j_q}$, $j_1<\ldots<j_q$.
	We have
	\begin{equation}\label{e-gue201106ycdh}
\langle\, 2\cL(\ov L_j,L_k)e_k\wedge i_{L_j}(-\sqrt{-1}T)u\,|\,u\,\rangle=\langle\, \sum_J q(-\sqrt{-1}Tu_J)e_J\,|\,u\,\rangle=q\langle\,-\sqrt{-1}Tu\,|\,u\,\rangle.
	\end{equation}
	Since $\sqrt{-1}R^{K_X^*}\geq -C\Theta_X$, as \eqref{e-gue201106ycdh}, we can check that 
	\begin{equation}\label{e-gue201106ycdi}
	\langle R^{K_X^*}(\ov L_j,L_k)e_k\wedge i_{L_j} u|u\rangle\geq -Cq|u|^2.
	\end{equation}
	Since $d\Theta_X=d(2\sqrt{-1}\cL)=0$, we have $\mT=[\Lambda,\dbar_b\Theta_X]=0$. From this observation, \eqref{e-gue201104yyd}, \eqref{e-gue201106ycdh} and 
	\eqref{e-gue201106ycdi}, we obtain 
	\be
	\left(\|\ddbar_b u\|^2+\|\ddbar_b^* u\|^2\right)
	\geq q\left(-\sqrt{-1}Tu|u\right)-qC\|u\|^2
	\ee
	holds for every $u\in \Omega_c^{0,q}(X)$ with $1\leq q\leq n$. 
\end{proof}

		
\section{Szeg\H{o} kernel asymptotics}\label{s-gue201106ycdp}

In this section, we will establish Szeg\H{o} kernel asymptotic 
expansions on $X$ under certain curvature assumptions. 

\subsection{Complete CR manifolds}\label{s-gue201111yyd}
Let $X$ be a CR manifold as in Assumption \ref{as1}.
Let $g_X$ be the $\R$-invariant Hermitian metric as in 
\eqref{e-gue201128yyd}.
We will assume in the following that the Riemannian metric
induced by $g_X$ on $TX$ is complete and study the
extension $\ddbar_b$, $\ddbar_b^*$ and $T$.
We denote by the same symbols the
maximal weak extensions in $L^2$ of these differentials operator.  


Since $g_X$ is complete we know by \cite[Lemma 2.4, p.\,366]{Dem:12} 
that there exists a sequence $\{\chi_k\}^{\infty}_{k=1}\subset\cC^\infty_c(X)$ 
such that $0\leq\chi_k\leq1$, $\chi_{k+1}=1$ on ${\rm supp\,}\chi_k$, 
$|d\chi_k|_g\leq\frac{1}{2^k}$, for every $k=1,2,\ldots$\,, and 
$\bigcup^{\infty}_{k=1}{\rm supp\,}\chi_k=X$. 
By using this sequence as in the Andreotti-Vesentini lemma
on complex Hermitian manifolds 
(cf. \cite[Theorem 3.2, p.\,368]{Dem:12}, \cite[Lemma 3.3.1]{MM})
and the classical Friedrichs lemma we obtain the following. 

\begin{lemma}\label{l-gue201108yydI}
Assume that $(X,g_X)$ is complete. 
Then  $\Omega_c^{p,q}(X)$ is dense in $\Dom(\ddbar_b)$,  
$\Dom(\ddbar_b^*)$, $\Dom T$, $\Dom(\ddbar_b)\cap\Dom(\ddbar_b^*)$ 
and $\Dom(T)\cap\Dom(\ddbar_b)\cap\Dom(\ddbar_b^*)$ with respect to 
the graph norms of $\ddbar_b$, $\ddbar_b^*$, $T$, 
$\ddbar_b+\ddbar_b^*$ and $\ddbar_b+\ddbar_b^*+T$. 
Here the graph-norm of a linear operator $R$ is defined by 
$\|u\|+\|Ru\|$ for $u\in \Dom(R)$. 
\end{lemma}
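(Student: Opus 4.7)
The plan is to follow the classical Andreotti--Vesentini scheme (as in \cite[Thm.\ 3.2]{Dem:12} or \cite[Lem.\ 3.3.1]{MM}), in two stages: first reduce to compactly supported forms using the cutoffs $\chi_k$ supplied by completeness, then mollify to reach $\Omega^{p,q}_c(X)$ via the Friedrichs lemma. The same argument handles $\ddbar_b$, $\ddbar_b^*$ and $T$ simultaneously because all three are first--order differential operators with smooth coefficients.

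First I would treat the compact--support reduction. Given $u\in\Dom(\ddbar_b)\cap\Dom(\ddbar_b^*)\cap\Dom T$, set $u_k:=\chi_k u\in L^2_{p,q}(X)$. Clearly $u_k\to u$ in $L^2$ by dominated convergence, since $\chi_k\nearrow 1$ pointwise. Using the Leibniz rule,
\[
\ddbar_b u_k=\chi_k\,\ddbar_b u+(\ddbar_b\chi_k)\wedge u,\qquad
\ddbar_b^*u_k=\chi_k\,\ddbar_b^*u-i_{(\ddbar_b\chi_k)^{\sharp}}u,\qquad
Tu_k=\chi_k\,Tu+(T\chi_k)\,u,
\]
and each of the error terms is pointwise bounded by $|d\chi_k|_{g}\,|u|\le 2^{-k}|u|$, hence tends to zero in $L^2$. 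The principal terms tend to $\ddbar_b u$, $\ddbar_b^*u$ and $Tu$ respectively by dominated convergence. Thus $u_k\to u$ in the graph norm of $\ddbar_b+\ddbar_b^*+T$, and $u_k$ has compact support. The verification of $\ddbar_b^*u_k=\chi_k\ddbar_b^*u-i_{(\ddbar_b\chi_k)^\sharp}u$ in the weak sense is the only slightly subtle point, but it follows by testing against $\varphi\in\Omega^{p,q-1}_c(X)$ and using $\chi_k\varphi\in\Omega^{p,q-1}_c(X)$.

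Next I would mollify. Each $u_k$ has support in a fixed compact set $K_k\subset X$. Choose a finite open cover of $K_k$ by BRT coordinate charts, a subordinate partition of unity $\{\rho_\ell\}$, and convolution mollifiers $\rho_\epsilon$ in the local coordinates. The classical Friedrichs lemma (as stated e.g.\ in \cite[Lem.\ 3.1.1]{MM}) guarantees that for any first--order differential operator $P$ with smooth coefficients, $[\rho_\epsilon,P](\rho_\ell u_k)\to 0$ in $L^2$ as $\epsilon\to 0$. Applying this to each of $\ddbar_b$, $\ddbar_b^*$ and $T$ in turn, summing over $\ell$ and patching, produces $u_{k,\epsilon}\in\Omega^{p,q}_c(X)$ with $u_{k,\epsilon}\to u_k$ in the combined graph norm as $\epsilon\to 0$. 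A standard diagonal extraction then yields a sequence in $\Omega^{p,q}_c(X)$ converging to $u$ in the graph norm of $\ddbar_b+\ddbar_b^*+T$, and in particular in each of the five graph norms stated in the lemma.

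I expect the main obstacle to be purely bookkeeping: verifying that the product--rule formula for $\ddbar_b^*u_k$ above is valid in the weak sense for $u\in\Dom\ddbar_b^*$ (not only for smooth $u$), and then confirming that the Friedrichs commutator estimate survives the partition--of--unity patching uniformly across the three operators. Both are routine given the $\R$--invariant smooth structure and the completeness of $g_X$, which is precisely what makes the cutoff sequence $\{\chi_k\}$ available.
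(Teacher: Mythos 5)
Your proposal is correct and follows exactly the route the paper itself takes: the paper's "proof" is precisely the citation of the Andreotti--Vesentini cutoff argument with the sequence $\{\chi_k\}$ satisfying $|d\chi_k|_g\le 2^{-k}$, followed by the classical Friedrichs mollification, applied simultaneously to the first-order operators $\ddbar_b$, $\ddbar_b^*$ and $T$. The two points you flag as subtle (the weak-sense Leibniz rule for $\ddbar_b^*$ and the uniformity of the Friedrichs commutator estimate under the partition of unity) are indeed the only places requiring care, and you handle them correctly.
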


As a consequence, analogue to \cite[Corollary 3.3.3]{MM}, we obtain:
\begin{cor}
If $(X,g_X)$ be complete, then 
the maximal extension of the formal adjoint of $\ddbar_b$ and $T$ 
coincide with their Hilbert space adjoint, respectively.
\end{cor}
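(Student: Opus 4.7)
The plan is the classical Andreotti--Vesentini duality argument, applied separately to $R=\ddbar_b$ and to $R=T$. Let $R_{\max}$ denote the maximal $L^2$ extension of $R$, write $R^t$ for the formal adjoint of $R$ on $\Omega^{\bullet,\bullet}_c(X)$, and let $R^t_{\max}$ be the maximal extension of $R^t$. The goal is $R^t_{\max}=(R_{\max})^*$, where the right-hand side denotes the Hilbert space adjoint.

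First I would dispatch the inclusion $(R_{\max})^*\subseteq R^t_{\max}$, which does not use completeness. If $v\in\Dom((R_{\max})^*)$ with $(R_{\max})^*v=w\in L^2$, then pairing against an arbitrary $u\in\Omega^{\bullet,\bullet}_c(X)\subseteq\Dom(R_{\max})$ yields $(u\,|\,w)=(R_{\max}u\,|\,v)=(Ru\,|\,v)$, which says exactly that the distribution $R^t v$ coincides with $w\in L^2$, so $v\in\Dom(R^t_{\max})$ and $R^t_{\max}v=w$.

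For the nontrivial inclusion $R^t_{\max}\subseteq (R_{\max})^*$, fix $v\in\Dom(R^t_{\max})$. Integration by parts on test forms gives $(Ru\,|\,v)=(u\,|\,R^tv)$ for every $u\in\Omega^{\bullet,\bullet}_c(X)$. To upgrade this identity to all $u\in\Dom(R_{\max})$, I will invoke Lemma~\ref{l-gue201108yydI}: by completeness of $g_X$ there is a sequence $u_n\in\Omega^{\bullet,\bullet}_c(X)$ with $u_n\to u$ and $Ru_n\to R_{\max}u$ in $L^2$. Passing to the limit in $(Ru_n\,|\,v)=(u_n\,|\,R^tv)$ gives $(R_{\max}u\,|\,v)=(u\,|\,R^tv)$, whence $v\in\Dom((R_{\max})^*)$ with $(R_{\max})^*v=R^tv$, completing the equality.

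The whole content of the corollary, then, is the graph-norm density of $\Omega^{\bullet,\bullet}_c(X)$ in $\Dom(R_{\max})$, which is supplied by Lemma~\ref{l-gue201108yydI}; once that is available the duality argument above is formal. The only step that could be called an obstacle is the density statement itself, but it is already proved upstream using the exhaustion $\{\chi_k\}$ with $|d\chi_k|_g\leq 2^{-k}$ (which is exactly where completeness is used) together with a Friedrichs mollification to ensure $\chi_k u\in\Omega^{\bullet,\bullet}_c(X)$ can be smoothed without losing control of $R(\chi_k u)$.
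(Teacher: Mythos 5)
Your proposal is correct and is precisely the standard Andreotti--Vesentini duality argument that the paper itself invokes (it gives no written proof, only the citation ``analogue to [MM, Corollary~3.3.3]''), with the only non-formal ingredient being the graph-norm density of $\Omega^{\bullet,\bullet}_c(X)$ supplied by Lemma~\ref{l-gue201108yydI}. Both inclusions are handled as intended: the containment $(R_{\max})^*\subseteq R^t_{\max}$ is the trivial distributional one, and the reverse containment uses completeness only through the density lemma.
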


\begin{lemma}\label{l-gue201108yyd}
If $(X,g_X)$ be complete, then 
$\sqrt{-1}T: \Dom(\sqrt{-1}T)\subset 
L^2_{\bullet,\bullet}(X)\rightarrow L^2_{\bullet,\bullet}(X)$ is self-adjoint, that is, 
$(\sqrt{-1}T)^*=\sqrt{-1}T$. 
\end{lemma}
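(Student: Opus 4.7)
The strategy is to reduce self-adjointness of $\sqrt{-1}T$ to skew-adjointness of $T$, which in turn reduces to two ingredients: (i) $T$ is formally skew-adjoint on $\Omega^{\bullet,\bullet}_c(X)$, and (ii) the maximal $L^2$-extension of the formal adjoint of $T$ coincides with the Hilbert-space adjoint of (the maximal extension of) $T$. Fact (ii) is precisely the corollary immediately preceding the lemma, whose proof uses completeness of $(X,g_X)$ and the density result Lemma~\ref{l-gue201108yydI}. So the real content is to verify (i) and then assemble the pieces.

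For (i), the point is that the $\R$-action preserves $g_X$ and $\omega_0$ (Assumption~\ref{as1} and \eqref{e-gue201128yyd}), hence it preserves the induced Hermitian inner product $\langle\,\cdot\,|\,\cdot\,\rangle$ on $\Lambda^{\bullet,\bullet}\C T^*X$ and the volume form $dv_X=(\Theta_X^n/n!)\wedge\omega_0$. In particular $\mathcal L_T dv_X=0$. Thus for $u,v\in\Omega^{\bullet,\bullet}_c(X)$,
\begin{equation*}
(Tu|v)+(u|Tv)=\int_X \mathcal L_T\langle u|v\rangle\,dv_X=\int_X \mathcal L_T\bigl(\langle u|v\rangle\,dv_X\bigr)=\int_X d\bigl(\iota_T(\langle u|v\rangle\,dv_X)\bigr)=0,
\end{equation*}
where I used Cartan's formula $\mathcal L_T=d\iota_T+\iota_T d$ (the $\iota_T d$ term vanishes since $\langle u|v\rangle\,dv_X$ is a top-degree form) together with Stokes' theorem applied to the compactly supported form $\iota_T(\langle u|v\rangle\,dv_X)$. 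Hence the formal adjoint of $T$ on $\Omega^{\bullet,\bullet}_c(X)$ is $-T$.

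Combining (i) with (ii) gives $T^*=-T$ as operators on $L^2_{\bullet,\bullet}(X)$, with matching domains; that is, $T$ is skew-adjoint. Consequently $\Dom((\sqrt{-1}T)^*)=\Dom(T^*)=\Dom(T)=\Dom(\sqrt{-1}T)$ and on this common domain
\begin{equation*}
(\sqrt{-1}T)^*=\overline{\sqrt{-1}}\,T^*=-\sqrt{-1}\cdot(-T)=\sqrt{-1}T,
\end{equation*}
which is the desired self-adjointness.

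The only step that requires any care is the integration by parts above, but invariance of $dv_X$ under the flow of $T$ together with Cartan and Stokes makes it routine. The genuine obstacle that is already handled upstream is fact (ii): without completeness of $g_X$, the maximal and minimal extensions of $T$ could differ and $T^*$ might not coincide with the maximal extension of the formal adjoint. Completeness enters through the Andreotti--Vesentini cutoff sequence $\{\chi_k\}$ used in Lemma~\ref{l-gue201108yydI} and its corollary, and it is exactly this that makes the above computation pass from the formal to the unbounded operator level.
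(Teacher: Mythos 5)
Your proof is correct and follows exactly the route the paper intends: the paper states this lemma without proof, leaving it as a consequence of the preceding corollary (Hilbert-space adjoint $=$ maximal extension of the formal adjoint, via completeness and Lemma~\ref{l-gue201108yydI}) combined with the formal skew-symmetry of $T$, which you verify correctly using the $\R$-invariance of $g_X$ and of $dv_X=(\Theta_X^n/n!)\wedge\omega_0$ together with Cartan's formula and Stokes' theorem. Nothing is missing.
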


Using these results and  extend the estimates
from Corollary \ref{vanish_0q_1} as follows.

\begin{thm}\label{t-gue201108yyd}
Let $X$ be a CR manifold as in Assumption \ref{as1}.
Assume that $2\sqrt{-1}\cL=\Theta_X$, $g_X$ 
is complete and there is $C>0$ such that 
\[\sqrt{-1}R^{K^*_X}\geq-C\Theta_X.\]
Then, for any $u\in L^2_{0,q}(X)$, $1\leq q\leq n$, 
$u\in\Dom\ddbar_b\bigcap{\rm Dom}\ddbar^*_b\bigcap\Dom(\sqrt{-1}T)$, 
we have 
\begin{equation}\label{e-gue201108ycd}
(-\sqrt{-1}Tu\,|\,u\,)\leq\frac{1}{q}\Bigr(\|\ddbar_bu\|^2+
\|\ddbar^*_bu\|^2\Bigr)+C\|u\|^2.
\end{equation}
\end{thm}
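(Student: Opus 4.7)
The plan is to reduce the inequality for general $u$ in the triple domain to the smooth compactly supported case already handled in Corollary~\ref{vanish_0q_1} by a density and approximation argument. The completeness of $g_X$ is the crucial ingredient, entering through Lemma~\ref{l-gue201108yydI}, which supplies a sequence $\{\chi_k\}\subset\cC^\infty_c(X)$ with $|d\chi_k|_g\to 0$ and, combined with the Friedrichs mollifier trick, yields the density of $\Omega^{0,q}_c(X)$ in $\Dom(\ddbar_b)\cap\Dom(\ddbar_b^*)\cap\Dom(\sqrt{-1}T)$ in the graph norm of $\ddbar_b+\ddbar_b^*+T$.

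First I would fix $u\in L^2_{0,q}(X)$ lying in $\Dom(\ddbar_b)\cap\Dom(\ddbar_b^*)\cap\Dom(\sqrt{-1}T)$ and apply Lemma~\ref{l-gue201108yydI} to produce a sequence $\{u_k\}\subset\Omega^{0,q}_c(X)$ with
\[
u_k\to u,\quad \ddbar_b u_k\to\ddbar_b u,\quad \ddbar_b^* u_k\to\ddbar_b^* u,\quad Tu_k\to Tu
\]
all in $L^2$ as $k\to\infty$. Because each $u_k$ is smooth and compactly supported, Corollary~\ref{vanish_0q_1} applies and gives
\[
(-\sqrt{-1}Tu_k\,|\,u_k)\leq\tfrac{1}{q}\bigl(\|\ddbar_b u_k\|^2+\|\ddbar_b^* u_k\|^2\bigr)+C\|u_k\|^2.
\]

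The final step is to pass to the limit $k\to\infty$. The right hand side converges to $\tfrac{1}{q}(\|\ddbar_b u\|^2+\|\ddbar_b^* u\|^2)+C\|u\|^2$ by the $L^2$ convergence of $u_k$, $\ddbar_b u_k$ and $\ddbar_b^* u_k$. For the left hand side, since $u_k\to u$ and $Tu_k\to Tu$ in $L^2$, the bilinearity and continuity of the inner product give $(-\sqrt{-1}Tu_k\,|\,u_k)\to(-\sqrt{-1}Tu\,|\,u)$, yielding \eqref{e-gue201108ycd}.

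The only non-routine point is the density statement itself, which is handled by Lemma~\ref{l-gue201108yydI}: one uses the cut-offs $\chi_k$ (whose existence requires completeness) to reduce to compactly supported elements of the various domains, then mollifies via the Friedrichs lemma to bring the forms into $\Omega^{0,q}_c(X)$ while controlling simultaneously $\ddbar_b$, $\ddbar_b^*$ and $T$. Since Lemma~\ref{l-gue201108yydI} already packages this carefully, the present theorem reduces to a clean approximation argument and no further technical obstacle arises.
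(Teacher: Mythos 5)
Your proposal is correct and follows essentially the same route as the paper: invoke Lemma~\ref{l-gue201108yydI} (density of $\Omega^{0,q}_c(X)$ in the joint domain for the graph norm of $\ddbar_b+\ddbar_b^*+\sqrt{-1}T$, available by completeness), apply the pointwise estimate of Corollary~\ref{vanish_0q_1} to each approximant, and pass to the limit using continuity of the $L^2$ inner product. If anything, your statement of the graph-norm convergence is slightly more careful than the paper's, which omits the $\ddbar_b^*$ term from its displayed limit.
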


\begin{proof}
Let $u\in L^2_{0,q}(X)$, $1\leq q\leq n$, $u\in\Dom\ddbar_b\bigcap\Dom\ddbar^*_b\bigcap\Dom(\sqrt{-1}T)$. 
From Lemma~\ref{l-gue201108yydI}, we can find  
$\{u_j\}^{\infty}_{j=1}\subset\Omega^{\bullet,\bullet}_c(X)$ such that 
\begin{equation}\label{e-gue201108ycdI}
\lim_{j\infty}\Bigr(\|u_j-u\|^2+\|\ddbar_bu_j-\ddbar_bu\|^2+
\|\sqrt{-1}Tu_j-\sqrt{-1}Tu\|^2\Bigr)=0.
\end{equation}
From \eqref{e-gue201108ycda}, we have for every $j=1,2,\ldots$, 
\begin{equation}\label{e-gue201108ycdII}
(-\sqrt{-1}Tu_j\,|\,u_j\,)\leq\frac{1}{q}\Bigr(\|\ddbar_bu_j\|^2+
\|\ddbar^*_bu_j\|^2\Bigr)+C\|u_j\|^2.
\end{equation}
Taking $j\rightarrow\infty$ in \eqref{e-gue201108ycdII} and 
using \eqref{e-gue201108ycdI}, we get \eqref{e-gue201108ycd}. 
\end{proof}

Let us describe two examples of complete CR manifolds with
complete $\R$-invariant metric $g_X$.

\begin{exam}
Let $(X,HX,J,\omega_0)$ be a compact strictly pseudoconvex
CR manifold as in Assumption \ref{as1} 
and let $g_X$ be an $\R$-invariant metric as in \eqref{e-gue201128yyd}.
Let $\pi:\widetilde{X}\to X$ be a Galois covering of $X$,
that is, there exists a discrete, proper action 
$\Gamma$ such that $X=\widetilde{X}/\Gamma$. 
By pulling back the objects from $X$ by the projection $\pi$
we obtain a strictly pseudoconvex
CR manifold $(\widetilde{X},H\widetilde{X},\widetilde{J},\widetilde{\omega}_0)$ 
satisfying Assumption \ref{as1}.
Moreover, the metric $\widetilde{g}_X=\pi^*g_X$ is a complete
$\R$-invariant metric satisfying \eqref{e-gue201128yyd}.
\end{exam}

\begin{exam}\label{ex:GT}
Let us consider now the case of a circle bundle associated
to a Hermitian holomorphic line bundle.
Let compact complex manifold $(M,J,\Theta_M)$ be a
complete Hermitian manifold.
Let $(L,h^L)\to M$ be a Hermitian holomorphic line bundle over $M$. 
Let $h^{L^*}$ be the Hermitian metric 
on $L^*$ induced by $h^L$. 
Let 
\begin{equation}\label{eq:GraTube}
X:=\{v\in L^*;\, \abs{v}^2_{h^{L^*}}=1\}
\end{equation}
be the circle bundle of $L^*$; 
it is isomorphic to the $S^1$ principal bundle associated to $L$.
Since $X$ is a hypersurface in the complex manifold $L^*$,
it a has a CR structure $(X,HX,J)$ inherited from the complex structure of $L^*$
by setting $T^{1,0}X= TX\cap T^{1,0} L^{*}$.

In this situation,  $S^{1}$ acts on $X$ by fiberwise multiplication,
denoted $(x,e^{i\theta})\mapsto xe^{i\theta}$.
A point $x\in X$ is a pair $x=(p,\lambda)$, where $\lambda$ is a linear
functional on $L_p$, the $S^1$ action is 
$xe^{i\theta}=(p,\lambda)e^{i\theta}=(p,e^{i\theta}\lambda)$.

Let $\omega_0$ be the connection $1$-form on $X$
associated to the Chern connection $\nabla^L$. 
Then
\begin{equation}\label{eq:curvx}
d\omega_0=\pi^*(iR^L),
\end{equation}
where $R^L$ is the curvature of $\nabla^L$, hence $X$ is
a strictly pseudoconvex CR manifold.
Hence $(X,HX,J,\omega_0)$ fulfills Assumption \ref{as1}.
We denote by $\partial_\theta$
the infinitesimal generator of the $S^1$ action on $X$. 
The span of $\partial_\theta$
defines a rank one subbundle $T^VX\cong TS^1\subset TX$, 
the vertical subbundle of the fibration $\pi:X\to M$. 
Moreover \eqref{e-gue201128yyd1} holds for $T=\partial_\theta$.

We construct now a Riemannian metric on $X$. 
Let $g_M$ be a $J$-invariant metric on $TM$ associated to $\Theta_M$.
The Chern connection $\nabla ^L$ on  $L$ induces a connection 
on the $S^1$-principal bundle $\pi:X\to M$, 
and let $T^H X \subset TX$ be the corresponding horizontal bundle.
Let $g_X= \pi^*g_M\oplus d\theta^2/2\pi$
be the metric on 
$TX= T^HX\oplus T S^1$, with $d\theta^2$ 
the standard metric on $S^1= \R/2\pi\Z$.
Then $g_X$ is an $\R$-invariant Hermitian metric on $X$ satisfying
\eqref{e-gue201128yyd}. Since $g_M$ is complete it is
easy to see that $g_X$ is also complete.

\end{exam}

\subsection{The operators $Q_\lambda$, $Q_{[\lambda_1,\lambda]}$, 
$Q_{\tau}$}\label{s-gue201111yydI}

Let $\mathbb{S}$ denote the spectrum of $\sqrt{-1}T$. 
By the spectral theorem, there exists a finite measure $\mu$ on
$\mathbb S\times\mathbb N$ and a unitary operator 
\[U: L^2_{\bullet,\bullet}(X)\rightarrow L^2(\mathbb S\times\mathbb N, d\mu)\]
with the following properties: 
If $h: \mathbb S\times\mathbb N\rightarrow\mathbb R$ 
is the function $h(s,n)=s$, then the element 
$\xi$ of $L^2_{\bullet,\bullet}(X)$ lies in $\Dom(\sqrt{-1}T)$ 
if and only if $hU(\xi)\in L^2(\mathbb S\times\mathbb N, d\mu)$. We have 
\[\mbox{$U\sqrt{-1}TU^{-1}\varphi=h\varphi$, for all $\varphi\in U(\Dom(\sqrt{-1}T))$}.\]
Let $\lambda_1, \lambda\in\mathbb R$, $\lambda_1<\lambda$ 
and let $\tau(t)\in\cC^\infty(\mathbb R,[0,1])$. Put 
\begin{equation}\label{e-gue201108ycde}
\begin{split}
&\cE(\lambda,\sqrt{-1}T):=U^{-1}\Bigr({\rm Image\,}U
\bigcap\{1_{]-\infty,\lambda]}(s)h(s,n);\, h(s,n)\in 
L^2(\mathbb S\times\mathbb N,d\mu)\}\Bigr),\\
&\cE([\lambda_1,\lambda],\sqrt{-1}T):=
U^{-1}\Bigr({\rm Image\,}U\bigcap\{1_{[\lambda_1,\lambda]}(s)h(s,n);\, 
h(s,n)\in L^2(\mathbb S\times\mathbb N,d\mu)\}\Bigr),\\
&\cE(\tau,\sqrt{-1}T):=U^{-1}\Bigr({\rm Image\,}U\bigcap\{\tau(s)h(s,n);\, 
h(s,n)\in L^2(\mathbb S\times\mathbb N,d\mu)\}\Bigr),
\end{split}
\end{equation}
where $1_{]-\infty,\lambda]}(s)=1$ if $s\in]-\infty,\lambda]$,  
$1_{]-\infty,\lambda]}(s)=0$ if $s\notin]-\infty,\lambda]$ 
and similar for $1_{[\lambda_1,\lambda]}(s)$. 
Let 
\begin{equation}\label{e-gue201108ycdf}
\begin{split}
&Q_\lambda: L^2_{\bullet,\bullet}(X)\rightarrow\cE(\lambda,\sqrt{-1}T),\\
&Q_{[\lambda_1,\lambda]}: L^2_{\bullet,\bullet}(X)\rightarrow\cE([\lambda_1,\lambda],\sqrt{-1}T),\\
&Q_{\tau}: L^2_{\bullet,\bullet}(X)\rightarrow\cE(\tau_\lambda,\sqrt{-1}T)
\end{split}
\end{equation}
be the orthogonal projections with respect to $(\,\cdot\,|\,\cdot\,)$.  

Since $X$ is strictly pseudoconvex, from~\cite[Lemma 3.4 (3), p.\,239]{KN96}, 
\cite[Theorem 3.5]{HHL17}, we have one of the following two cases: 
\begin{equation}\label{e-gue201109yyd}
\begin{split}
&\mbox{(a) The $\mathbb R$-action is free},\\
&\mbox{(b) The $\mathbb R$-action comes from a CR torus action $\mathbb T^d$ on $X$ and $\omega_0$ and $\Theta_X$ are $\mathbb T^d$ invariant}. 
\end{split}
\end{equation}

Assume that the $\mathbb R$-action is free. Let $D=U\times I$ be a BRT chart with BRT coordinates $x=(x_1,\ldots,x_{2n+1})$. Since the $\mathbb R$-action is 
free, we can extend $x=(x_1,\ldots,x_{2n+1})$ to $\hat D:=U\times\mathbb R$. We identify $\hat D$ with an open set in $X$.

\begin{lemma}\label{l-gue201110yyd}
Assume that the $\mathbb R$-action is free. Let $D=U\times I$ be a BRT chart with BRT coordinates $x=(x_1,\ldots,x_{2n+1})$. Let $\lambda_1, \lambda\in\mathbb R$, $\lambda_1<\lambda$. For $u\in\Omega^{\bullet,\bullet}_c(D)$, we have 
\begin{equation}\label{e-gue201110yyd}
(Q_\lambda u)(x)=\frac{1}{(2\pi)^{2n+1}}\int e^{i<x-y,\eta>}1_{]-\infty,\lambda]}(-\eta_{2n+1})u(y)dyd\eta\in\Omega^{\bullet,\bullet}(\hat D),
\end{equation}
\begin{equation}\label{e-gue201110yydI}
(Q_{[\lambda_1,\lambda]}u)(x)=\frac{1}{(2\pi)^{2n+1}}\int e^{i<x-y,\eta>}1_{[\lambda_1,\lambda]}(-\eta_{2n+1})u(y)dyd\eta\in\Omega^{\bullet,\bullet}(\hat D),
\end{equation}
\begin{equation}\label{e-gue201110yydII}
(Q_{\tau}u)(x)=\frac{1}{(2\pi)^{2n+1}}\int e^{i<x-y,\eta>}\tau(-\eta_{2n+1})u(y)dyd\eta\in\Omega^{\bullet,\bullet}(\hat D),
\end{equation}
and ${\rm supp\,}Q_\lambda u\subset\hat D$, ${\rm supp\,}Q_{[\lambda_1,\lambda]}u\subset\hat D$, ${\rm supp\,}Q_{\tau_\lambda}u\subset\hat D$, where $\hat D$ is in the discussion after \eqref{e-gue201109yyd}. 
\end{lemma}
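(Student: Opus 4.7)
The plan is to reduce the computation of $Q_\lambda u$, $Q_{[\lambda_1,\lambda]}u$ and $Q_\tau u$ for $u\in\Omega^{\bullet,\bullet}_c(D)$ to a partial Fourier analysis in the coordinate $x_{2n+1}$. In a BRT chart we have $T=\partial/\partial x_{2n+1}$, and since the $\R$-action is free the chart $D=U\times I$ extends to $\hat D=U\times\R\subset X$, on which the $\R$-action is translation in $x_{2n+1}$ and $dv_X=d\lambda(z)\wedge dx_{2n+1}$. By Lemma~\ref{l-gue201108yyd} the operator $\sqrt{-1}T$ is self-adjoint on $L^2_{\bullet,\bullet}(X)$, and Stone's theorem identifies its unitary group $V_s:=e^{is\sqrt{-1}T}$ with the pullback by the $\R$-action, so on $\hat D$ it acts by $(V_s u)(z,x_{2n+1})=u(z,x_{2n+1}-s)$.

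Because $\hat D$ is $\R$-invariant, the closed subspace $H_{\hat D}\subset L^2_{\bullet,\bullet}(X)$ of forms supported in $\hat D$ is $V_s$-invariant for every $s$, hence, by functional calculus, invariant under each of $Q_\lambda$, $Q_{[\lambda_1,\lambda]}$ and $Q_\tau$. Using Fubini to decompose $H_{\hat D}\cong L^2_{\bullet,\bullet}(U,d\lambda)\otimes L^2(\R,dx_{2n+1})$, the restriction of $\sqrt{-1}T$ to $H_{\hat D}$ is $\mathrm{Id}\otimes \sqrt{-1}\tfrac{\partial}{\partial x_{2n+1}}$, and the partial Fourier transform $F$ in $x_{2n+1}$ is unitary and conjugates $\sqrt{-1}\tfrac{\partial}{\partial x_{2n+1}}$ to multiplication by $-\eta_{2n+1}$. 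By uniqueness of the spectral representation, $FQ_\lambda F^{-1}$, $FQ_{[\lambda_1,\lambda]}F^{-1}$ and $FQ_\tau F^{-1}$ are respectively multiplication by $1_{]-\infty,\lambda]}(-\eta_{2n+1})$, $1_{[\lambda_1,\lambda]}(-\eta_{2n+1})$ and $\tau(-\eta_{2n+1})$.

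For $u\in\Omega^{\bullet,\bullet}_c(D)$ the partial Fourier transform $\widehat u(z,\eta_{2n+1})$ is compactly supported in $z$ and Schwartz in $\eta_{2n+1}$. Multiplication by any of the three bounded symbols above therefore yields an absolutely integrable function whose inverse Fourier transform in $\eta_{2n+1}$, by differentiation under the integral, is a smooth form on $\hat D$ supported in $(\supp_z u)\times\R\subset\hat D$. Rewriting this partial Fourier inversion as a full Fourier integral on $\R^{2n+1}$ by inserting $(2\pi)^{-2n}\int e^{i(x_j-y_j)\eta_j}\,d\eta_j=\delta(x_j-y_j)$ for $1\le j\le 2n$ and integrating out $y_1,\ldots,y_{2n}$ then produces \eqref{e-gue201110yyd}--\eqref{e-gue201110yydII}. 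The only non-automatic ingredient—the main point to verify carefully—is the identification, used in the second paragraph, of the spectral projection of the globally defined $\sqrt{-1}T$ restricted to $H_{\hat D}$ with the spectral projection of the translation generator on the tensor-product subspace $H_{\hat D}$; this follows from $\R$-invariance of $\hat D$ together with the fact that the functional calculus of a self-adjoint operator commutes with passage to closed invariant subspaces.
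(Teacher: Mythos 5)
Your proof is correct, but it takes a genuinely different route from the paper's. The paper proves the lemma for $Q_{\tau}$ first by truncating $\tau$ to a compactly supported $\tau_M$, representing $Q_{\tau_M}$ via the Helffer--Sj\"ostrand formula, computing the resolvent $(z-\sqrt{-1}T)^{-1}u$ explicitly as an oscillatory integral on $\hat D$ (which is where the support statement is read off), and then recovering $\tau_M(-\eta_{2n+1})$ by the Cauchy integral formula; the indicator functions $1_{]-\infty,\lambda]}$ and $1_{[\lambda_1,\lambda]}$ are then handled by a further approximation by smooth compactly supported functions. You instead argue structurally: $\hat D$ is invariant under the flow of $T$, so the closed subspace $H_{\hat D}$ of forms supported in $\hat D$ reduces the unitary group $e^{is\sqrt{-1}T}$ and hence the whole Borel functional calculus of $\sqrt{-1}T$; on $H_{\hat D}\cong L^2_{\bullet,\bullet}(U)\otimes L^2(\R)$ the reduced operator is $\mathrm{Id}\otimes i\partial_{x_{2n+1}}$ (the two self-adjoint operators agree on the core $\Omega^{\bullet,\bullet}_c(\hat D)$, hence coincide), and the partial Fourier transform diagonalizes it, giving all three multipliers at once with no truncation or limiting argument. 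Both routes rest on the standing completeness assumption of Section 4.1 (self-adjointness of $\sqrt{-1}T$ and density of $\Omega^{\bullet,\bullet}_c$ in the graph norm), so that is not a point of difference. What the paper's resolvent computation buys is an explicit local formula that localizes supports step by step and is in the spirit of the microlocal analysis used later; what your argument buys is brevity and the simultaneous treatment of $Q_\lambda$, $Q_{[\lambda_1,\lambda]}$ and $Q_\tau$ by reducing everything to the elementary spectral theory of $i\,d/dx$ on $L^2(\R)$. The one step you flag as non-automatic --- that the global spectral projection restricted to $H_{\hat D}$ is the spectral projection of the translation generator --- is indeed the crux, and your justification (invariance under the full unitary group implies the subspace is reducing, plus essential self-adjointness on compactly supported forms) is sound.
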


\begin{proof}
Let $\chi\in\cC^\infty_c(\mathbb R)$, $\chi=1$ on $[-1,1]$, $\chi=0$ outside $[-2,2]$. For every $M>0$, put $\tau_{M}(t):=\chi(\frac{t}{M})\tau(t)$. Then,
\begin{equation}\label{e-gue201110ycd}
\mbox{$Q_{\tau}u=\lim_{M\rightarrow\infty}Q_{\tau_{M}}u$ in 
$L^2_{\bullet,\bullet}(X)$, for every $u\in L^2_{\bullet,\bullet}(X)$}.
\end{equation}
From the Helffer-Sj\"ostrand formula~\cite[Proposition 7.2]{HS89}, we see that 
\begin{equation}\label{e-gue201110yydIII}
Q_{\tau_{M}}=\frac{1}{2\pi i}
\int_{\mathbb C}\frac{\partial\tilde\tau_{M}}{\partial\overline z}(z-\sqrt{-1}T)^{-1}
dz\wedge d\overline z\ \ \mbox{on $L^2_{\bullet,\bullet}(X)$},
\end{equation}
where $\tilde\tau_{M}\in\cC^\infty_c(\mathbb C)$ is an extension of 
$\tau_{M}$ with $\frac{\partial\tilde\tau_{M}}{\partial\overline z}=0$ 
on $\mathbb R$. It is not difficult to see that for $u\in\Omega^{\bullet,\bullet}_c(D)$, 
\begin{equation}\label{e-gue201110yydIV}
(z-\sqrt{-1}T)^{-1}u=\frac{1}{(2\pi)^{2n+1}}
\int e^{i<x-y,\eta>}\frac{1}{z+\eta_{2n+1}}u(y)dyd\eta
\in\Omega^{\bullet,\bullet}(\hat D)
\end{equation}
and ${\rm supp\,}(z-\sqrt{-1}T)^{-1}u\subset\hat D$. 
From \eqref{e-gue201110yydIII} and \eqref{e-gue201110yydIV}, we have 
\begin{equation}\label{e-gue201110yydV}
(Q_{\tau_{M}}u)(x)=\frac{1}{2\pi i}\frac{1}{(2\pi)^{2n+1}}
\int_{\mathbb C}\int e^{i<x-y,\eta>}\frac{\frac{\partial\tilde\tau_{M}}
{\partial\overline z}}{z+\eta_{2n+1}}u(y)dyd\eta dz\wedge d\overline z
\in\Omega^{\bullet,\bullet}(\hat D) 
\end{equation}
and ${\rm supp\,}Q_{\tau_{M}}u\subset\hat D$, for every 
$u\in\Omega^{\bullet,\bullet}_c(D)$. 
By the Cauchy integral formula, we see that 
\[\frac{1}{2\pi i}\int_{\mathbb  C}\frac{1}{z+\eta_{2n+1}}
\frac{\partial\tilde\tau_{M}}{\partial\overline z}
dz\wedge d\overline z=\tau_{M}(-\eta_{2n+1}).\]
From this observation and \eqref{e-gue201110yydV}, we deduce that 
\begin{equation}\label{e-gue201110ycdI}
(Q_{\tau_{M}}u)(x)=\frac{1}{(2\pi)^{2n+1}}
\int e^{i<x-y,\eta>}\tau_{M}(-\eta_{2n+1})u(y)dyd\eta
\in\Omega^{\bullet,\bullet}(\hat D) 
\end{equation}
and ${\rm supp\,}Q_{\tau_{M}}u\subset\hat D$, for every 
$u\in\Omega^{\bullet,\bullet}_c(D)$. 
From \eqref{e-gue201110ycd} and \eqref{e-gue201110ycdI}, 
we get \eqref{e-gue201110yydII}. 

Let $\gamma_\varepsilon\in\cC^\infty_c(\mathbb R)$, $\lim_{\varepsilon\rightarrow0}\gamma_\varepsilon(t)=
1_{]-\infty,\lambda]}(t)$, for every $t\in\mathbb R$. 
We can repeat the proof above and get that 
\[Q_\lambda u=\lim_{\varepsilon\rightarrow0}Q_{\gamma_\varepsilon}u=
\frac{1}{(2\pi)^{2n+1}}
\int e^{i<x-y,\eta>}1_{]-\infty,\lambda]}(-\eta_{2n+1})u(y)dyd\eta
\in\Omega^{\bullet,\bullet}(\hat D)\]
and ${\rm supp\,}Q_{\lambda}u\subset\hat D$, 
for every $u\in\Omega^{\bullet,\bullet}_c(D)$.  
We obtain \eqref{e-gue201110yyd}. The proof of \eqref{e-gue201110yydI} is similar. 
\end{proof}

We now assume that the $\mathbb R$-action is not free. 
From \eqref{e-gue201109yyd}, we know that the $\mathbb R$-action 
comes from a CR torus action 
$\mathbb T^d=(e^{i\theta_1},\ldots,e^{i\theta_d})$ on $X$ and $\omega_0$, 
$\Theta_X$ are $\mathbb T^d$ invariant. Since the $\mathbb R$-action comes 
from the $\mathbb T^d$-action, 
there exist $\beta_1,\ldots,\beta_d\in\mathbb R$, such that 
\begin{equation}\label{e-gue201116yyda}
T=\beta_1T_1+\ldots+\beta_dT_d, 
\end{equation}
where $T_j$ is the vector field on $X$ given by $T_ju:=
\frac{\partial}{\partial\theta_j}((1,\ldots,1,e^{i\theta_j},1,\ldots,1)^*u)|_{\theta_j=0}$, 
$u\in\Omega^{\bullet,\bullet}(X)$, $j=1,\ldots,d$. For $(m_1,\ldots,m_d)\in\mathbb Z^d$, 
put 
\[
L^{2,m_1,\ldots,m_d}_{\bullet,\bullet}(X)\\
:=\{u\in L^2_{\bullet,\bullet}(X);\, (e^{i\theta_1},\ldots,e^{i\theta_d})^*u=
e^{im_1\theta_1+\ldots+im_d\theta_d}u, \forall\, 
(\theta_1,\ldots,\theta_d)\in\R^d\}
\]
and let
\begin{equation}\label{e-gue201112yyd}
Q_{m_1,\ldots,m_d}: L^2_{\bullet,\bullet}(X)\rightarrow 
L^{2,m_1,\ldots,m_d}_{\bullet,\bullet}(X)
\end{equation}
be the orthogonal projection. It is not difficult to see that for every $u\in L^2_{\bullet,\bullet}(X)$, we have 
\begin{equation}\label{e-gue201112yydI}
\begin{split}
&Q_\lambda u=\sum_{\substack{(m_1,\ldots,m_d)\in\mathbb Z^d,\\ 
-m_1\beta_1-\ldots-m_d\beta_d\leq\lambda}}Q_{m_1,\ldots,m_d}u,\\
&Q_{[\lambda_1,\lambda]}u=\sum_{\substack{(m_1,\ldots,m_d)\in\mathbb Z^d,\\ 
\lambda_1\leq-m_1\beta_1-\ldots-m_d\beta_d\leq\lambda}}Q_{m_1,\ldots,m_d}u,\\
&Q_\tau u=\sum_{(m_1,\ldots,m_d)\in\mathbb Z^d}
\tau(-m_1\beta_1-\ldots-m_d\beta_d)Q_{m_1,\ldots,m_d}u.
\end{split}
\end{equation}
From Lemma~\ref{l-gue201110yyd} and \eqref{e-gue201112yydI}, 
we conclude that 

\begin{prop}\label{p-gue201112yyd}
Let $\lambda_1, \lambda\in\mathbb R$, $\lambda_1<\lambda$. 
For $u\in\Dom\ddbar_b$, we have $Q_\lambda u, 
Q_{[\lambda_1,\lambda]}u, Q_\tau u\in\Dom\ddbar_b$ 
and $\ddbar_bQ_\lambda u=Q_\lambda\ddbar_bu$, 
$\ddbar_bQ_{[\lambda_1,\lambda]} u=Q_{[\lambda_1,\lambda]}\ddbar_bu$, 
$\ddbar_bQ_\lambda u=Q_\lambda \ddbar_bu$. 
 Similarly, for $u\in\Dom\ddbar^*_b$, we have 
$Q_\lambda u, Q_{[\lambda_1,\lambda]}u, 
Q_\tau u\in\Dom\ddbar^*_b$ and $\ddbar^*_bQ_\lambda u=
Q_\lambda\ddbar^*_bu$, $\ddbar^*_bQ_{[\lambda_1,\lambda]} u=
Q_{[\lambda_1,\lambda]}\ddbar^*_bu$, $\ddbar^*_bQ_\tau u=
Q_\tau\ddbar^*_bu$. 
\end{prop}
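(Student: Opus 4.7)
The proposition asserts commutation of $\ddbar_b$ and $\ddbar_b^*$ with three spectral projections of the self-adjoint operator $\sqrt{-1}T$ (cf.\ Lemma~\ref{l-gue201108yyd}). I would reduce these six claims to a single one, namely commutation of $\ddbar_b$ (respectively $\ddbar_b^*$) with the one-parameter unitary group $U_r$ generated by $T$, and then invoke bounded Borel functional calculus.

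Since the $\R$-action is CR and preserves the Hermitian metric $g_X$, the pullbacks $U_ru(x):=u(r\circ x)$ form a strongly continuous one-parameter group of unitaries on $L^2_{\bullet,\bullet}(X)$ with infinitesimal generator $T$, so $U_r=e^{rT}$. On $\Omega^{\bullet,\bullet}_c(X)$, the CR condition combined with Cartan's formula $\mathcal L_T=d\circ i_T+i_T\circ d$ shows that $\mathcal L_T$ preserves the bigrading and commutes with $d$, hence with $\ddbar_b=\pi^{0,q+1}\circ d$; exponentiating gives $U_r\ddbar_bu=\ddbar_bU_ru$ for $u\in\Omega^{\bullet,\bullet}_c(X)$. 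Using Lemma~\ref{l-gue201108yydI} (which says $\Omega^{\bullet,\bullet}_c(X)$ is a core for $\ddbar_b$) and closedness of $\ddbar_b$, this extends to: for every $u\in\Dom\ddbar_b$ one has $U_ru\in\Dom\ddbar_b$ with $\ddbar_bU_ru=U_r\ddbar_bu$. Taking Hilbert-space adjoints, using $U_r^*=U_{-r}$, yields the analogous commutation for $\ddbar_b^*$.

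Next, each of $Q_\lambda,Q_{[\lambda_1,\lambda]},Q_\tau$ is a bounded Borel function of $\sqrt{-1}T$, so it lies in the strong closure of the $*$-algebra generated by $\{U_r\}_{r\in\R}$. In the torus case \eqref{e-gue201109yyd}(b), the orthogonal decomposition \eqref{e-gue201112yydI} together with $\mathbb T^d$-invariance give $Q_{m_1,\ldots,m_d}\ddbar_b=\ddbar_bQ_{m_1,\ldots,m_d}$ on $\Dom\ddbar_b$; summing over the appropriate index sets and passing to $L^2$-limits (via closedness of $\ddbar_b$) then yields the claim for all three projections. In the free case, the Helffer--Sj\"ostrand representation \eqref{e-gue201110yydIII} for $Q_{\tau_M}$ expresses it as an $L^2$-convergent integral of resolvents $(z-\sqrt{-1}T)^{-1}$; these resolvents commute with $\ddbar_b$ on $\Dom\ddbar_b$ (solve $(z-\sqrt{-1}T)v=u$ and apply $\ddbar_b$ using the first step), so $Q_{\tau_M}$ commutes with $\ddbar_b$, and letting $M\to\infty$ (respectively $\varepsilon\to 0$ for the indicator projections $\gamma_\varepsilon$ used in the proof of Lemma~\ref{l-gue201110yyd}) combined with closedness of $\ddbar_b$ delivers the general statement. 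The argument for $\ddbar_b^*$ is identical.

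\textbf{Main obstacle.} The delicate point is propagating the algebraic commutation from the core $\Omega^{\bullet,\bullet}_c(X)$ to the full domain $\Dom\ddbar_b$: given approximants $u_n\to u$ in the graph norm of $\ddbar_b$, one must verify that $(Qu_n,\ddbar_bQu_n)=(Qu_n,Q\ddbar_bu_n)$ converges in $L^2\times L^2$, whose limit $(Qu,Q\ddbar_bu)$ then belongs to the graph of $\ddbar_b$ by closedness. This is immediate in the torus case from orthogonality of the $Q_{m_1,\ldots,m_d}$, but in the free case it requires knowing that the bounded approximants $Q_{\tau_M}$ commute with $\ddbar_b$ uniformly in $M$, which is the content of the first step. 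Once this scaffolding is in place, no further analytic ingredients are needed.
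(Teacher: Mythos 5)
Your proof is correct and rests on the same two pillars as the paper's (unwritten) argument: the CR-invariance of $\ddbar_b$ under the $\R$-flow, and the realization of $Q_\lambda$, $Q_{[\lambda_1,\lambda]}$, $Q_\tau$ as (strong limits of) operators built from that flow --- via the Helffer--Sj\"ostrand/resolvent representation of Lemma~\ref{l-gue201110yyd} in the free case and the torus decomposition \eqref{e-gue201112yydI} otherwise. The only organizational difference is that you first establish commutation with the unitary group $U_r$ globally and then pass to the projections by functional calculus and closedness, whereas the paper reads the commutation off the explicit BRT-chart formulas directly; both are sound, and your extension from the core $\Omega^{\bullet,\bullet}_c(X)$ to $\Dom\ddbar_b$ via Lemma~\ref{l-gue201108yydI} is legitimate since the completeness of $g_X$ (needed both for that lemma and for the self-adjointness of $\sqrt{-1}T$ underlying the spectral projections) is in force throughout this section.
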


For $\lambda\in\mathbb R$, define 
\begin{equation}\label{e-gue201112yydII}
\begin{split}
&\Box_{b,\lambda}: \Dom\Box_{b,\lambda}\subset\cE(\lambda,\sqrt{-1}T)\rightarrow\cE(\lambda,\sqrt{-1}T),\\
\Dom\Box_{b,\lambda}&:=\Dom\Box_{b}\bigcap \cE(\lambda,\sqrt{-1}T),\:\:
\Box_{b,\lambda}u=\Box_bu,\ \ \text{for }u\in\Dom\Box_{b,\lambda},
\end{split}
\end{equation}
where $\Box_{b}$ is defined in \eqref{e:gaf1}, \eqref{e:gaf2}.
From Proposition~\ref{p-gue201112yyd}, we see that 
\begin{equation}\label{e-gue201112yydIII}
\begin{split}
&\Dom\Box_{b,\lambda}=Q_\lambda(\Dom\Box_b),\\
&Q_\lambda\Box_b=\Box_bQ_\lambda=\Box_{b,\lambda}Q_\lambda\ \ 
\mbox{on $\Dom\Box_b$}. 
\end{split}
\end{equation}
From now on, we write $\Box^{(q)}_b$ and $\Box^{(q)}_{b,\lambda}$ 
to denote $\Box_b$ and $\Box_{b,\lambda}$ acting on $(0,q)$ forms, 
respectively. 

\subsection{Local closed range for $\Box^{(0)}_b$}\label{s-gue201112yyd}

In this section, we will establish the local closed range property for 
$\Box^{(0)}_b$ under appropriate curvature assumptions. 
We first need the following.

\begin{lemma}\label{l-gue201112yydh}
Assume that $2\sqrt{-1}\cL=\Theta_X$, $g_X$ is complete and there is $C>0$ such that 
\[\sqrt{-1}R^{K^*_X}\geq-C\Theta_X.\]
Then, for any $u\in L^2_{0,q}(X)$, $1\leq q\leq n$, 
$u\in\Dom\ddbar_b\bigcap\Dom\ddbar^*_b\bigcap\cE(\lambda,\sqrt{-1}T)$, 
$\lambda\leq -2C$, we have 
\begin{equation}\label{e-gue201112yydh}
\|u\|^2\leq\frac{1}{qC}\Bigr(\|\ddbar_bu\|^2+\|\ddbar^*_bu\|^2\Bigr). 
\end{equation}
\end{lemma}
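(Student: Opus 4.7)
The plan is to combine the CR Bochner--Kodaira--Nakano estimate from Theorem~\ref{t-gue201108yyd} with the spectral condition $u \in \cE(\lambda,\sqrt{-1}T)$ for $\lambda \le -2C$, which forces a lower bound on $(-\sqrt{-1}Tu\,|\,u)$ large enough to absorb the $C\|u\|^2$ term on the right-hand side of \eqref{e-gue201108ycd}.

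First I would verify that the hypothesis $u \in \cE(\lambda,\sqrt{-1}T)$ automatically implies $u \in \Dom(\sqrt{-1}T)$. Using the spectral representation $U\colon L^2_{\bullet,\bullet}(X) \to L^2(\mathbb{S}\times\mathbb{N},d\mu)$ from Section~\ref{s-gue201111yydI}, the element $U u$ is supported in $\{(s,n):s\le\lambda\}$, so $|h(s,n)\,Uu(s,n)| \le |\lambda|\,|Uu(s,n)|$ lies in $L^2$; hence $u \in \Dom(\sqrt{-1}T)$. All hypotheses of Theorem~\ref{t-gue201108yyd} are therefore met, giving
\[
(-\sqrt{-1}Tu\,|\,u) \le \tfrac{1}{q}\bigl(\|\ddbar_b u\|^2 + \|\ddbar_b^* u\|^2\bigr) + C\|u\|^2.
\]

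Next I would extract the spectral lower bound. Since $1_{(-\infty,\lambda]}(s)\cdot s \le \lambda$ pointwise on the support of $Uu$, the spectral theorem yields $(\sqrt{-1}Tu\,|\,u) = \int s\,|Uu|^2\,d\mu \le \lambda \|u\|^2$, and so
\[
(-\sqrt{-1}Tu\,|\,u) \ge -\lambda\,\|u\|^2 \ge 2C\,\|u\|^2,
\]
where we used $\lambda \le -2C$ in the last step.

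Finally I would chain the two inequalities: $2C\|u\|^2 \le \tfrac{1}{q}(\|\ddbar_b u\|^2 + \|\ddbar_b^* u\|^2) + C\|u\|^2$, whence $C\|u\|^2 \le \tfrac{1}{q}(\|\ddbar_b u\|^2 + \|\ddbar_b^* u\|^2)$, which is exactly \eqref{e-gue201112yydh}. There is no real obstacle here; the lemma is a direct composition of the a priori estimate of Theorem~\ref{t-gue201108yyd} with the spectral cutoff $Q_\lambda$, and the threshold $\lambda \le -2C$ is precisely chosen so that the term $(-\sqrt{-1}Tu\,|\,u)$ dominates the lower-order curvature term $C\|u\|^2$ with a factor of two to spare.
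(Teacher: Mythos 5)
There is a genuine gap at the very first step: your claim that $u\in\cE(\lambda,\sqrt{-1}T)$ automatically implies $u\in\Dom(\sqrt{-1}T)$ is false, and the inequality you use to justify it is backwards. By \eqref{e-gue201108ycde}, the spectral support of $Uu$ lies in $\{(s,n):\,s\in\,]-\infty,\lambda]\}$, which is a half-line \emph{unbounded below}; since $\lambda\leq-2C<0$, on this set one has $|s|\geq|\lambda|$ with $|s|$ unbounded, not $|s|\leq|\lambda|$. (The spectrum of $\sqrt{-1}T$ is in general all of $\mathbb R$ — in the free case $T=\partial/\partial x_{2n+1}$ this is just the Fourier variable.) So $h\,Uu$ need not be square-integrable, $u$ need not lie in $\Dom(\sqrt{-1}T)$, and Theorem~\ref{t-gue201108yyd} cannot be applied to $u$ directly; likewise the pairing $(-\sqrt{-1}Tu\,|\,u)$ that you bound from below may not be defined.

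The repair is the one the paper uses: truncate the spectrum from below. Set $u_M:=Q_{[-M,\lambda]}u$ for $M\gg1$; then $Uu_M$ is supported where $-M\leq s\leq\lambda$, so $u_M\in\Dom(\sqrt{-1}T)$, and by Proposition~\ref{p-gue201112yyd} also $u_M\in\Dom\ddbar_b\cap\Dom\ddbar_b^*$ with $\ddbar_bu_M=Q_{[-M,\lambda]}\ddbar_bu$ and $\ddbar_b^*u_M=Q_{[-M,\lambda]}\ddbar_b^*u$. Applying Theorem~\ref{t-gue201108yyd} to $u_M$, using $(-\sqrt{-1}Tu_M\,|\,u_M)\geq-\lambda\|u_M\|^2\geq 2C\|u_M\|^2$ (your spectral lower bound, which is valid for $u_M$), and bounding $\|Q_{[-M,\lambda]}\ddbar_bu\|\leq\|\ddbar_bu\|$, $\|Q_{[-M,\lambda]}\ddbar_b^*u\|\leq\|\ddbar_b^*u\|$, one gets $\|u_M\|^2\leq\frac{1}{qC}\bigl(\|\ddbar_bu\|^2+\|\ddbar_b^*u\|^2\bigr)$ uniformly in $M$; letting $M\to\infty$ gives \eqref{e-gue201112yydh}. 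Your final chaining of the two inequalities is exactly right — the only missing ingredient is this truncation-and-limit step, which is precisely what makes the a priori estimate applicable.
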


\begin{proof}
Let $\lambda\leq-2C$ and let $u\in\Dom\ddbar_b\bigcap
\Dom\ddbar^*_b\bigcap\cE(\lambda,\sqrt{-1}T)$, 
$u\in L^2_{0,q}(X)$, $1\leq q\leq n$. 
Let $M\gg1$ and let $u_M:=Q_{[-M,\lambda]}u$. 
By Proposition~\ref{p-gue201112yyd}, we see that 
\[u_M\in\Dom\ddbar_b\bigcap\Dom\ddbar^*_b\bigcap
\cE(\lambda,\sqrt{-1}T)\bigcap\Dom(\sqrt{-1}T).\]
From this observation and \eqref{e-gue201108ycd}, we have 
\begin{equation}\label{e-gue201113yyd}
\begin{split}
&-\lambda\|u_M\|^2\leq(\,-\sqrt{-1}Tu_M\,|\,u_M\,)\leq\frac{1}{q}\Bigr(\|Q_{[-M,\lambda]}\ddbar_bu\|^2+\|Q_{[-M,\lambda]}\ddbar^*_bu\|^2\Bigr)+C\|u_M\|^2\\
&\leq\frac{1}{q}\Bigr(\|\ddbar_bu\|^2+\|\ddbar^*_bu\|^2\Bigr)+C\|u_M\|^2.
\end{split}
\end{equation}
Note that $\lambda\leq-2C$. From this observation and \eqref{e-gue201113yyd}, we deduce that 
\begin{equation}\label{e-gue201113yydI}
\|u_M\|^2\leq\frac{1}{qC}\Bigr(\|\ddbar_bu\|^2+\|\ddbar^*_bu\|^2\Bigr).
\end{equation}
Let $M\rightarrow\infty$ in  \eqref{e-gue201113yydI}, we get \eqref{e-gue201112yydh}. 
\end{proof}

For every $q=0,1,\ldots,n$, put $\cE^{(q)}(\lambda,\sqrt{-1}T):=
\cE(\lambda,\sqrt{-1}T)\bigcap L^2_{0,q}(X)$. We can now prove: 

\begin{thm}\label{t-gue201113yyd}
Assume that $2\sqrt{-1}\cL=\Theta_X$, $g_X$ is complete and 
there is $C>0$ such that 
\[\sqrt{-1}R^{K^*_X}\geq-C\Theta_X.\]
Let $q\in\{1,\ldots,n\}$. Let $\lambda\in\mathbb R$, $\lambda\leq-2C$. 
The operator 
\[\Box^{(q)}_{b,\lambda}: \Dom\Box^{(q)}_{b,\lambda}\subset\cE^{(q)}
(\lambda,\sqrt{-1}T)\rightarrow\cE^{(q)}(\lambda,\sqrt{-1}T)\]
has closed range and ${\rm Ker\,}\Box^{(q)}_{b,\lambda}=\{0\}$. 
Hence, there is a bounded operator 
\[G^{(q)}_\lambda: \cE^{(q)}(\lambda,\sqrt{-1}T)\rightarrow
\Dom\Box^{(q)}_{b,\lambda}\] 
such that 
\begin{equation}\label{e-gue201113ycd}
\Box^{(q)}_{b,\lambda}G^{(q)}_\lambda=I\ \ \mbox{on $\cE^{(q)}(\lambda,\sqrt{-1}T)$}. 
\end{equation}
\end{thm}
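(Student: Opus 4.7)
The plan is to deduce everything from the fundamental estimate in Lemma~\ref{l-gue201112yydh}, together with the self-adjointness of the Gaffney extension $\Box^{(q)}_b$ and the fact (Proposition~\ref{p-gue201112yyd}) that $Q_\lambda$ commutes with $\bar\partial_b$ and $\bar\partial_b^*$.

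First I would observe that if $u\in\Dom\Box^{(q)}_{b,\lambda}$, then by the definition of the Gaffney extension \eqref{e:gaf2} we have $u\in\Dom\ddbar_b\cap\Dom\ddbar_b^*\cap\cE^{(q)}(\lambda,\sqrt{-1}T)$, and
\[
(\Box^{(q)}_{b,\lambda}u\,|\,u)=\|\ddbar_b u\|^2+\|\ddbar_b^* u\|^2.
\]
Applying Lemma~\ref{l-gue201112yydh} (valid because $\lambda\leq -2C$) and the Cauchy--Schwarz inequality then yields the a priori bound
\[
\|u\|^2\leq \frac{1}{qC}(\Box^{(q)}_{b,\lambda}u\,|\,u)\leq \frac{1}{qC}\|\Box^{(q)}_{b,\lambda}u\|\,\|u\|,
\]
so that $\|u\|\leq \frac{1}{qC}\|\Box^{(q)}_{b,\lambda}u\|$ for every $u\in\Dom\Box^{(q)}_{b,\lambda}$. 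This immediately gives ${\rm Ker\,}\Box^{(q)}_{b,\lambda}=\{0\}$.

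Next I would verify closed range: if $\Box^{(q)}_{b,\lambda}u_j\to v$ in $\cE^{(q)}(\lambda,\sqrt{-1}T)$, the estimate above shows $(u_j)$ is Cauchy, hence converges to some $u$; closedness of the Gaffney extension $\Box^{(q)}_b$ (it is self-adjoint), combined with the fact that $\cE^{(q)}(\lambda,\sqrt{-1}T)$ is a closed subspace preserved by $\Box^{(q)}_b$ (by Proposition~\ref{p-gue201112yyd}, which gives $\Box_b Q_\lambda=Q_\lambda\Box_b$ on $\Dom\Box_b$), yields $u\in\Dom\Box^{(q)}_{b,\lambda}$ with $\Box^{(q)}_{b,\lambda}u=v$.

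Finally, for surjectivity I would argue that $\Box^{(q)}_{b,\lambda}$, being the restriction of the self-adjoint operator $\Box^{(q)}_b$ to the closed invariant subspace $\cE^{(q)}(\lambda,\sqrt{-1}T)$, is itself self-adjoint in this Hilbert space. Then closed range combined with trivial kernel forces $\mathrm{Ran}\,\Box^{(q)}_{b,\lambda}=({\rm Ker\,}\Box^{(q)}_{b,\lambda})^\perp=\cE^{(q)}(\lambda,\sqrt{-1}T)$, and the inverse $G^{(q)}_\lambda$ exists and satisfies $\|G^{(q)}_\lambda\|\leq 1/(qC)$ by the a priori estimate, giving \eqref{e-gue201113ycd}. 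The only subtle point to justify carefully is the invariance statement $Q_\lambda\Box_b=\Box_bQ_\lambda$ on $\Dom\Box_b$ (so that the restriction $\Box^{(q)}_{b,\lambda}$ inherits self-adjointness) and the identification $\Dom\Box^{(q)}_{b,\lambda}=Q_\lambda(\Dom\Box_b)$ from \eqref{e-gue201112yydIII}; both follow from Proposition~\ref{p-gue201112yyd}, so there is no real obstacle beyond bookkeeping.
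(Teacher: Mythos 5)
Your proposal is correct and follows essentially the same route as the paper: the paper's proof consists precisely of deriving the a priori estimate $\|u\|\leq\frac{1}{qC}\|\Box^{(q)}_{b,\lambda}u\|$ from Lemma~\ref{l-gue201112yydh} and then asserting that the theorem follows. You have simply filled in the standard functional-analytic bookkeeping (injectivity, closedness of range via closedness of the self-adjoint Gaffney extension and the reducing subspace $\cE^{(q)}(\lambda,\sqrt{-1}T)$, and surjectivity from $\mathrm{Ran}=(\Ker)^\perp$) that the paper leaves implicit.
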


\begin{proof}
Let $u\in\Dom\Box^{(q)}_{b,\lambda}$. 
From \eqref{e-gue201112yydh}, we have 
\[\|u\|^2\leq\frac{1}{qC}\Bigr(\|\ddbar_bu\|^2+\|\ddbar^*_bu\|^2\Bigr)=\frac{1}{qC}(\,\Box^{(q)}_{b,\lambda}u\,|\,u\,).\]
Hence, 
\begin{equation}\label{e-gue201113yydII}
\|u\|\leq\frac{1}{qC}\|\Box^{(q)}_{b,\lambda}u\|.
\end{equation}
From \eqref{e-gue201113yydII}, the theorem follows. 
\end{proof}

We now consider $(0,1)$ forms. Let $G^{(1)}_\lambda$ be as in \eqref{e-gue201113ycd}. Since $G^{(1)}_\lambda$ is $L^2$ bounded, 
there is $C_0>0$ such that 
\begin{equation}\label{e-gue201113ycdI}
\|G^{(1)}_\lambda v\|\leq C_0\|v\|,\ \ \mbox{for every $v\in\cE^{(1)}(\lambda,\sqrt{-1}T)$}. 
\end{equation}
We can now prove 

\begin{thm}\label{t-gue201113yydI}
Assume that $2\sqrt{-1}\cL=\Theta_X$, $g_X$ is complete and 
there is $C>0$ such that 
\[\sqrt{-1}R^{K^*_X}\geq-C\Theta_X.\]
Let $\lambda\in\mathbb R$, $\lambda\leq-2C$. 
For every $v\in\cE^{(1)}(\lambda,\sqrt{-1}T)$ with $\ddbar_bv=0$, 
we can find $u\in\Dom\ddbar_b\bigcap\cE^{(0)}(\lambda,\sqrt{-1}T)$ such that 
\begin{equation}\label{e-gue201113ycdII}
\begin{split}
&\ddbar_bu=v,\\
&\|u\|^2\leq C_0\|v\|^2, 
\end{split}
\end{equation}
where $C_0>0$ is a constant as in \eqref{e-gue201113ycdI}. 
\end{thm}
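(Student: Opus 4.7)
The plan is to solve the equation $\ddbar_b u = v$ by the Hodge-theoretic recipe $u := \ddbar_b^{*}\,G^{(1)}_\lambda v$, using the Green operator $G^{(1)}_\lambda$ produced in Theorem~\ref{t-gue201113yyd}. Set $w := G^{(1)}_\lambda v \in \Dom\square^{(1)}_{b,\lambda} \subset \cE^{(1)}(\lambda,\sqrt{-1}T)$, so that $\square^{(1)}_b w = v$ and $\|w\| \leq C_0\|v\|$ by \eqref{e-gue201113ycdI}. Since $w$ lies in $\Dom\square_b$, both $\ddbar_b w$ and $\ddbar_b^{*}w$ are well defined; by Proposition~\ref{p-gue201112yyd} the operator $\ddbar_b^{*}$ preserves the spectral subspace $\cE(\lambda,\sqrt{-1}T)$, hence $u = \ddbar_b^{*}w \in \cE^{(0)}(\lambda,\sqrt{-1}T)\cap\Dom\ddbar_b$.

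The heart of the argument is to show $\ddbar_b u = v$, i.e.\ that $\ddbar_b\ddbar_b^{*}w = v$, which by $\square_b w = v$ reduces to
\begin{equation*}
\alpha := \ddbar_b^{*}\ddbar_b w = 0.
\end{equation*}
Using the closedness of $\ddbar_b$ and the density statement of Lemma~\ref{l-gue201108yydI}, we get $\ddbar_b^{2}=0$ on $\Dom\ddbar_b$; in particular $\ddbar_b\ddbar_b^{*}w \in \Dom\ddbar_b$ with $\ddbar_b(\ddbar_b\ddbar_b^{*}w) = 0$. Since $v\in\Dom\ddbar_b$ with $\ddbar_b v = 0$ by hypothesis, we conclude that $\alpha = v - \ddbar_b\ddbar_b^{*}w \in \Dom\ddbar_b$ and $\ddbar_b\alpha = 0$. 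Now, since $\ddbar_b w\in\Dom\ddbar_b^{*}$ (because $w\in\Dom\square_b$) and $\alpha\in\Dom\ddbar_b$, the adjointness relation gives
\begin{equation*}
\|\alpha\|^{2} = (\ddbar_b^{*}\ddbar_b w\,|\,\alpha) = (\ddbar_b w\,|\,\ddbar_b\alpha) = 0,
\end{equation*}
so $\alpha = 0$ and $\ddbar_b u = v$ as desired.

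Finally, the norm estimate comes for free from the usual integration by parts: since $u = \ddbar_b^{*}w$ and $\ddbar_b^{*}\ddbar_b w = 0$, we have
\begin{equation*}
\|u\|^{2} = \|\ddbar_b^{*}w\|^{2} \leq \|\ddbar_b^{*}w\|^{2} + \|\ddbar_b w\|^{2} = (\square^{(1)}_b w\,|\,w) = (v\,|\,w) \leq \|v\|\,\|G^{(1)}_\lambda v\| \leq C_0\|v\|^{2},
\end{equation*}
which is exactly \eqref{e-gue201113ycdII}.

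The main (very mild) obstacle is the verification that $\alpha\in\Dom\ddbar_b$ and the legality of the integration by parts identifying $\|\alpha\|^{2}$ with $(\ddbar_b w\,|\,\ddbar_b\alpha)$; both rely on the completeness of $g_X$ through Lemma~\ref{l-gue201108yydI} (to get $\ddbar_b^{2}=0$ on the whole maximal domain via the density of compactly supported smooth forms) together with Proposition~\ref{p-gue201112yyd} to keep everything inside the spectral subspace $\cE(\lambda,\sqrt{-1}T)$ where $G^{(1)}_\lambda$ is defined.
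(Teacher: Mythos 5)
Your proposal is correct and follows essentially the same route as the paper: set $u=\ddbar_b^{*}G^{(1)}_\lambda v$, use $v=\ddbar_b\ddbar_b^{*}G^{(1)}_\lambda v+\ddbar_b^{*}\ddbar_bG^{(1)}_\lambda v$, show the second term vanishes, and derive the bound from $\|u\|^2\le(\Box^{(1)}_{b,\lambda}G^{(1)}_\lambda v\,|\,G^{(1)}_\lambda v)=(v\,|\,G^{(1)}_\lambda v)$. The only (immaterial) difference is that you kill $\ddbar_b^{*}\ddbar_bG^{(1)}_\lambda v$ by computing its norm directly via adjointness, whereas the paper observes it lies in $\Ker\Box^{(1)}_{b,\lambda}=\{0\}$ from Theorem~\ref{t-gue201113yyd}.
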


\begin{proof}
Let $v\in\cE^{(1)}(\lambda,\sqrt{-1}T)$ with $\ddbar_bv=0$. From \eqref{e-gue201113ycd}, we have 
\begin{equation}\label{e-gue201113ycdIII}
v=\ddbar_b\,\ddbar^*_bG^{(1)}_\lambda v+\ddbar^*_b\ddbar_bG^{(1)}_\lambda v. 
\end{equation}
Since $\ddbar_b\Bigr(\ddbar^*_b\ddbar_bG^{(1)}_\lambda v\Bigr)=
\ddbar_bv-\ddbar^2_b\,\ddbar^*_bG^{(1)}_\lambda v=0$, 
$\ddbar^*_b\Bigr(\ddbar^*_b\ddbar_bG^{(1)}_\lambda v\Bigr)=0$, 
$\ddbar^*_b\ddbar_bG^{(1)}_\lambda v\in{\rm Ker\,}\Box^{(1)}_{b,\lambda}$. 
From Theorem~\ref{t-gue201113yyd}, we see that 
$\ddbar^*_b\ddbar_bG^{(1)}_\lambda v=0$. From this observation 
and \eqref{e-gue201113ycdIII}, 
we get $v=\ddbar_bu$, $u=\ddbar^*_bG^{(1)}_\lambda v$. Now, 
\[\|u\|^2=\|\ddbar^*_bG^{(1)}_\lambda v\|^2\leq
\|\ddbar_bG^{(1)}_\lambda v\|^2+\|\ddbar^*_bG^{(1)}_\lambda v\|^2=
(\,\Box^{(1)}_{b,\lambda}G^{(1)}_\lambda v\,|\,G^{(1)}_\lambda v\,)=
(\,v\,|\,G^{(1)}_\lambda v\,)\leq C_0\|v\|^2,\]
where $C_0>0$ is as in \eqref{e-gue201113ycdI}. The theorem follows. 
\end{proof}

Fix $q\in\{0,1,\ldots,n\}$. Let 
\[S^{(q)}: L^2_{0,q}(X)\rightarrow \Ker\square_b^{(q)}\]
be the orthogonal projection with respect to $(\,\cdot\,|\,\cdot\,)$. From Proposition~\ref{p-gue201112yyd}, we can check that
\begin{equation}\label{e-gue201115yyd}
\begin{split}
&\mbox{$Q_\lambda S^{(q)}=S^{(q)}Q_\lambda$ on $L^2_{0,q}(X)$},\\
&\mbox{$Q_{[\lambda_1,\lambda]}S^{(q)}=S^{(q)}Q_{[\lambda_1,\lambda]}$ on $L^2_{0,q}(X)$},\\
&\mbox{$Q_\tau S^{(q)}=S^{(q)}Q_\tau$ on $L^2_{0,q}(X)$}.
\end{split}
\end{equation}
We recall the following notion introduced in~\cite[Definition 1.8]{HM16}. 

\begin{defn}\label{d-gue201114yydf}
Fix $q\in\{0,1,2,\ldots,n\}$. Let $Q:L^2_{0,q}(X)\rightarrow L^2_{0,q}(X)$
be a continuous operator. We say that $\Box^{(q)}_b$ has local $L^2$ closed range
on an open set $D\subset X$ with respect to $Q$ if for every $D'\Subset D$, 
there exist constants $C_{D'}>0$ and $p\in\mathbb N$, such that
\[\|Q(I-S^{(q)})u\|^2\leq C_{D'}\big(\,(\Box^{(q)}_b)^pu\,|\,u\big),\ \ \mbox{for all $u\in\Omega^{0,q}_c(D')$}.\]
\end{defn}

We remind the reader that we do not assume that $\Theta_X=2\sqrt{-1}\cL$.
$2\sqrt{-1}\cL$ induces a Hermitian metric $\langle\,\cdot\,|\,\cdot\,\rangle_{\cL}$ on $\mathbb CTX$ and $\langle\,\cdot\,|\,\cdot\,\rangle_{\cL}$ induces a Hermitian metric $\langle\,\cdot\,|\,\cdot\,\rangle_{\cL}$ on $T^{*\bullet,\bullet}X$.
More precisely, if $X$ is strictly pseudoconvex, i.e., $2\sqrt{-1}\cL\in\Omega^{1,1}(X)$ is positive-definite, then we can construct a Hermitian metric $\langle\,\cdot\,|\,\cdot\,\rangle_{\cL}$ on $\mathbb CTX=T^{1,0}X\oplus T^{0,1}X\oplus \C \{T\}$ in the following way:
For arbitrary $a,b\in T^{1,0}X$, $\langle a| b\rangle_\cL:=2\cL(a,\ov b)$,  $\langle \ov a| \ov b\rangle_\cL:=\langle b| a\rangle_\cL$, $\langle a| \ov b\rangle_\cL:=0$ and $\langle T| T\rangle_\cL:=1$.
We simply use 
$2\sqrt{-1}\cL$ to represent $\langle\,\cdot\,|\,\cdot\,\rangle_{\cL}$. Let $(\,\cdot\,|\,\cdot\,)_{\cL}$ be the $L^2$ inner product on $\Omega^{\bullet,\bullet}_c(X)$ induced by $\langle\,\cdot\,|\,\cdot\,\rangle_{\cL}$ and let $L^2_{\bullet,\bullet}(X,\cL)$ be the completion of $\Omega^{\bullet,\bullet}_c(X)$ with respect to $(\,\cdot\,|\,\cdot\,)_{\cL}$. We write $L^2(X,\cL):=L^2_{0,0}(X,\cL)$. For $f\in  L^2_{\bullet,\bullet}(X,\cL)$, we write $\|f\|^2_{\cL}:=(\,f\,|\,f\,)_{\cL}$. 
  
Let $R^{K_X^*}_{\cL}$ be the Chern curvature of $K^*_X$ with respect to the Hermitian metric $\langle,\rangle_{\cL}$ on $X$, see (\ref{e-gue201025yyd}). Locally it can be represented by
\begin{equation}\label{e-201120curc}
	R^{K_X^*}_{\cL}=\ddbar_b\dbar_b\log\det\left(\langle Z_j|Z_k \rangle_{\cL}\right)_{j,k=1}^n.
\end{equation}
  
For $u\in\Omega^{\bullet,\bullet}_c(X)$, from Lemma~\ref{l-gue201110yyd} and \eqref{e-gue201112yydI}, we see that $Q_\lambda u$, $Q_{[\lambda_1,\lambda]}u$, $Q_\tau u$ are independent of the choices of $\mathbb R$-invariant Hermitian metrics on $X$. We can now prove 
  
\begin{thm}\label{t-gue201113yydp} 
Assume that $2\sqrt{-1}\cL$ is complete and there is $C>0$ such that 
{\begin{equation}\label{e-gue201113yydm}
\sqrt{-1}R^{K^*_X}_{\cL}\geq-2C\sqrt{-1}\cL,\ \ 
{(2\sqrt{-1}\cL)^n\wedge\omega_0\geq C\Theta_X^n\wedge\omega_0.}
\end{equation}}
Let $D\Subset X$ be an open set. Let $\lambda\in\mathbb R$, 
$\lambda\leq-2C$. Then, $\square_b^{(0)}$ has local closed range on $D$ 
with respect to $Q_\lambda$.
\end{thm}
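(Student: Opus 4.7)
The strategy is to reduce the estimate to Theorem~\ref{t-gue201113yydI} applied with the Levi metric, and then to transfer the conclusion back to the ambient metric $\Theta_X$ using the two hypotheses in \eqref{e-gue201113yydm}. The key observation that makes this possible is that the spectral projection $Q_\lambda$ does not depend on the choice of $\R$-invariant Hermitian metric (as noted just before the theorem), and neither do $\ddbar_b$, $\cE^{(q)}(\lambda,\sqrt{-1}T)$, nor the statement ``$f\in\Ker\ddbar_b$''.

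Equip $X$ with the Levi metric $g_\cL$; its fundamental $(1,1)$-form on $T^{1,0}X$ equals $2\sqrt{-1}\cL$, it is $\R$-invariant, and by hypothesis it is complete. With $g_X:=g_\cL$, the condition $\sqrt{-1}R^{K^*_X}_\cL\geq -2C\sqrt{-1}\cL$ is exactly the hypothesis $\sqrt{-1}R^{K^*_X}\geq -C\Theta_X$ of Theorem~\ref{t-gue201113yydI} (with the constant $C$ there replaced by our $2C$, hence $\lambda\leq -2C$ is the right threshold). Fix $D'\Subset D$ and $u\in\Omega^{0,0}_c(D')$. By Proposition~\ref{p-gue201112yyd}, the form $v:=Q_\lambda\ddbar_b u$ lies in $\cE^{(1)}(\lambda,\sqrt{-1}T)$ and satisfies $\ddbar_b v=Q_\lambda\ddbar_b^2u=0$. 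Applying Theorem~\ref{t-gue201113yydI} in the Levi framework, there exists $w\in\Dom(\ddbar_b)\cap\cE^{(0)}(\lambda,\sqrt{-1}T)\subset L^2(X,\cL)$ with
\[
\ddbar_b w=v,\qquad \|w\|_\cL^2\leq C_0\|v\|_\cL^2\leq C_0\|\ddbar_b u\|_\cL^2,
\]
where in the last inequality we used that $Q_\lambda$ is a contraction in the Levi $L^2$-inner product (being an orthogonal projection there).

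The second hypothesis of \eqref{e-gue201113yydm} reads $dv_X^\cL\geq C\,dv_X$, so for any scalar function $f$ one has $\|f\|^2\leq C^{-1}\|f\|_\cL^2$. In particular $w\in L^2(X)$ (with respect to $\Theta_X$), and $Q_\lambda u-w\in L^2(X)\cap \Ker(\ddbar_b)$, so $S^{(0)}(Q_\lambda u-w)=Q_\lambda u-w$, that is, $(I-S^{(0)})(Q_\lambda u-w)=0$. Using \eqref{e-gue201115yyd} to commute $Q_\lambda$ past $I-S^{(0)}$,
\[
Q_\lambda(I-S^{(0)})u=(I-S^{(0)})Q_\lambda u=(I-S^{(0)})w,
\]
and therefore
\[
\|Q_\lambda(I-S^{(0)})u\|^2\leq \|w\|^2\leq C^{-1}\|w\|_\cL^2\leq C^{-1}C_0\|\ddbar_b u\|_\cL^2.
\]
Since $u$ is supported in the compact set $\ov{D'}\subset X$, the continuous positive-definite $(1,1)$-forms $2\sqrt{-1}\cL$ and $\Theta_X$ are uniformly equivalent on $\ov{D'}$, giving a constant $C_{D'}>0$ with $\|\ddbar_b u\|_\cL^2\leq C_{D'}\|\ddbar_b u\|^2=C_{D'}(\square^{(0)}_b u\,|\,u)$. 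Combining yields the local closed range inequality with $p=1$.

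The one place that requires genuine care is the metric transfer: the form $w$ produced by Theorem~\ref{t-gue201113yydI} is only known to be $L^2$ for the Levi metric, so a priori nothing guarantees $w$ lies in $L^2(X,\Theta_X)$ or that the application of $S^{(0)}$ (defined using $\Theta_X$) makes sense on $Q_\lambda u - w$. The volume form comparison in \eqref{e-gue201113yydm} is precisely what is needed to globalize the bound, while the local equivalence of the two metrics on $\ov{D'}$ handles the pointwise norm discrepancy for $\ddbar_b u$. Both ingredients are used in exactly this order and cannot be dispensed with.
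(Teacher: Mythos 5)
Your proof is correct and follows essentially the same route as the paper: solve $\ddbar_b w=Q_\lambda\ddbar_b u$ with the Levi-metric $L^2$ estimate from Theorem~\ref{t-gue201113yydI}, use the volume-form inequality $(2\sqrt{-1}\cL)^n\wedge\omega_0\geq C\Theta_X^n\wedge\omega_0$ to pass from $\|\cdot\|_{\cL}$ to $\|\cdot\|$, exploit that $(I-S^{(0)})Q_\lambda u$ and $w$ differ by an element of $\Ker\ddbar_b$, and finish with the local equivalence of the two metrics on $\ov{D'}$. One cosmetic remark: the curvature hypothesis $\sqrt{-1}R^{K^*_X}_{\cL}\geq-2C\sqrt{-1}\cL$ is exactly $\sqrt{-1}R^{K^*_X}_{\cL}\geq-C\,(2\sqrt{-1}\cL)$, so Theorem~\ref{t-gue201113yydI} applies with the \emph{same} constant $C$ (not $2C$ as your parenthetical suggests), which is precisely why $\lambda\leq-2C$ is the correct threshold.
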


\begin{proof}
Let $u\in\cC^\infty_c(D)$. Let $v:=\ddbar_bQ_\lambda u=Q_\lambda\ddbar_bu$. Since $\ddbar_bu\in\Omega^{0,1}_c(D)$, 
\[Q_\lambda\ddbar_bu\in L^2_{\bullet,\bullet}(X,\cL)\bigcap L^2_{\bullet,\bullet}(X).\] 
From Theorem~\ref{t-gue201113yydI}, there exists $g\in L^2(X,\cL)$ with 
\begin{equation}\label{e-gue201113yydk}
\|g\|^2_{\cL}\leq C_0\|\ddbar_bQ_\lambda u\|^2_{\cL}\leq C_0\|\ddbar_bu\|^2_{\cL}
\end{equation}
such that 
\begin{equation}\label{e-gue201113yydl}
\ddbar_bg=\ddbar_bQ_\lambda u,
\end{equation}
where $C_0>0$ is a constant as in \eqref{e-gue201113ycdII}. 
Since $\ddbar_b(I-S^{(0)})Q_\lambda u=\ddbar_bQ_\lambda u$ 
and $(I-S^{(0)})Q_\lambda u\perp{\rm Ker\,}\ddbar_b$, we have 
\begin{equation}\label{e-gue201113yydn}
\|(I-S^{(0)})Q_\lambda u\|^2\leq\|g\|^2\leq C\|g\|^2_{\cL}, 
\end{equation}
where $C>0$ is a constant as in \eqref{e-gue201113yydm}. 
From \eqref{e-gue201113yydm}, \eqref{e-gue201113yydk} and 
\eqref{e-gue201113yydn}, we have 
\begin{equation}\label{e-gue201113yydp}
\|Q_\lambda(I-S^{(0)})u\|^2=
\|(I-S^{(0)})Q_\lambda u\|^2\leq C\|g\|^2_{\cL}\leq CC_0\|\ddbar_bu\|^2_{\cL}. 
\end{equation}
Since $\ddbar_bu$ has compact suppprt in $D$, there exists $C_1>0$ independent of $u$ such that 
\begin{equation}\label{e-gue201113yydq}
\|\ddbar_bu\|^2_{\cL}\leq C_1\|\ddbar_bu\|^2.
\end{equation}
From \eqref{e-gue201113yydp} and 
\eqref{e-gue201113yydq}, the theorem follows. 
\end{proof}

For $\lambda\in\mathbb R$, $\lambda\leq0$, let 
$\tau_\lambda\in\cC^\infty(\mathbb R,[0,1])$, 
$\tau_\lambda=1$ on $]-\infty, 2\lambda]$, $\tau_\lambda=0$ outside 
$]-\infty,\lambda]$. It is clear that 
$\|Q_{\tau_\lambda}(I-S^{(0)})u\|\leq\|Q_{\lambda}(I-S^{(0)})u\|$, 
for every $u\in L^2(X)$. From this observation and 
Theorem~\ref{t-gue201113yydp}, we deduce that 

\begin{thm}\label{t-gue201115yyd}
Assume that $2\sqrt{-1}\cL$ is complete and there is $C>0$ such that 
{\begin{equation}\label{e-gue201115yydI}
\sqrt{-1}R^{K^*_X}_{\cL}\geq-2C\sqrt{-1}\cL,\ \ 
{(2\sqrt{-1}\cL)^n\wedge\omega_0\geq C\Theta_X^n\wedge\omega_0.}
\end{equation}}
Let $D\Subset X$ be an open set. Let $\lambda\in\mathbb R$, 
$\lambda\leq-2C$. Then, $\square_b^{(0)}$ has local closed range on 
$D$ with respect to $Q_{\tau_\lambda}$.
\end{thm}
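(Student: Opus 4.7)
The plan is to derive Theorem~\ref{t-gue201115yyd} as a short consequence of Theorem~\ref{t-gue201113yydp} together with a pointwise comparison of the symbols $\tau_\lambda$ and $1_{]-\infty,\lambda]}$ under the spectral calculus of $\sqrt{-1}T$.

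\smallskip

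\textbf{Step 1: The spectral-calculus comparison.} Using the unitary diagonalization $U\colon L^2_{\bullet,\bullet}(X)\to L^2(\mathbb S\times\mathbb N,d\mu)$ of $\sqrt{-1}T$ from Section~\ref{s-gue201111yydI}, I would observe that $Q_{\tau_\lambda}$ and $Q_\lambda$ are just multiplication operators (on the $L^2(\mathbb S\times\mathbb N,d\mu)$ side) by the bounded functions $\tau_\lambda(s)$ and $1_{]-\infty,\lambda]}(s)$, respectively. Since $\tau_\lambda$ is supported in $]-\infty,\lambda]$ and satisfies $0\leq\tau_\lambda(s)\leq 1_{]-\infty,\lambda]}(s)$ pointwise, the Plancherel identity gives
\begin{equation*}
\|Q_{\tau_\lambda}f\|^2=\int|\tau_\lambda(s)|^2|Uf|^2\,d\mu\leq\int 1_{]-\infty,\lambda]}(s)|Uf|^2\,d\mu=\|Q_\lambda f\|^2,\quad f\in L^2_{\bullet,\bullet}(X).
\end{equation*}

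\smallskip

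\textbf{Step 2: Transfer to $(I-S^{(0)})u$.} Applying Step~1 to $f=(I-S^{(0)})u$ yields
\begin{equation*}
\|Q_{\tau_\lambda}(I-S^{(0)})u\|\leq\|Q_\lambda(I-S^{(0)})u\|,\quad u\in L^2(X),
\end{equation*}
which is precisely the inequality stated just before Theorem~\ref{t-gue201115yyd} in the text.

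\smallskip

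\textbf{Step 3: Invoke Theorem~\ref{t-gue201113yydp}.} Fix $D'\Subset D$ and $u\in\cC^\infty_c(D')$. Theorem~\ref{t-gue201113yydp}, whose hypotheses are identical to those of Theorem~\ref{t-gue201115yyd}, furnishes a constant $C_{D'}>0$ and some $p\in\mathbb N$ (in the proof of Theorem~\ref{t-gue201113yydp} one reads off $p=1$, since $\|\ddbar_b u\|^2=(\Box^{(0)}_bu\,|\,u)$ for functions) such that
\begin{equation*}
\|Q_\lambda(I-S^{(0)})u\|^2\leq C_{D'}\bigl((\Box^{(0)}_b)^pu\,|\,u\bigr).
\end{equation*}
Combining this with Step~2 yields
\begin{equation*}
\|Q_{\tau_\lambda}(I-S^{(0)})u\|^2\leq C_{D'}\bigl((\Box^{(0)}_b)^pu\,|\,u\bigr),\quad u\in\cC^\infty_c(D'),
\end{equation*}
which is exactly Definition~\ref{d-gue201114yydf} for local $L^2$ closed range with respect to $Q_{\tau_\lambda}$ on $D$. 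There is no genuine obstacle here: the theorem is really a corollary of Theorem~\ref{t-gue201113yydp}, and the only ingredient to justify is the routine spectral-calculus domination in Step~1, which is entirely formal given that $Q_\lambda$ and $Q_{\tau_\lambda}$ are defined through the same functional calculus of $\sqrt{-1}T$.
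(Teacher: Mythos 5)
Your proposal is correct and is essentially the paper's own argument: the authors also note (in the sentence immediately preceding the theorem) that $0\leq\tau_\lambda\leq 1_{]-\infty,\lambda]}$ forces $\|Q_{\tau_\lambda}(I-S^{(0)})u\|\leq\|Q_{\lambda}(I-S^{(0)})u\|$ via the spectral calculus of $\sqrt{-1}T$, and then quote Theorem~\ref{t-gue201113yydp}. Your Step~1 simply spells out the functional-calculus domination that the paper declares ``clear,'' and your reading of $p=1$ from the proof of Theorem~\ref{t-gue201113yydp} is accurate since $(\Box^{(0)}_bu\,|\,u)=\|\ddbar_bu\|^2$ for functions.
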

  
\subsection{Vanishing theorems}
In this section we present some vanishing theorems
that follow from the previous $L^2$ estimates.
We obtain first a CR counterpart of the Kodaira vanishing theorem 
\cite[Theorem 1.5.4.(a)]{MM} as follows.

\begin{cor}
Assume that $2\sqrt{-1}\cL=\Theta_X$, $g_X$ is complete and 
let $\lambda<0$ and $1\leq q\leq n$. 
Then, we have
\be
\Ker\square_b \cap\cE(\lambda,\sqrt{-1}T)\cap L^2_{n,q}(X)=0.
\ee 
\end{cor}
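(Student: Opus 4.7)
The plan is to exploit Corollary \ref{vanish_nq_1}, which under the K\"ahler-type assumption $2\sqrt{-1}\cL=\Theta_X$ gives a pointwise inequality with \emph{no} zero-order curvature term: for every $u\in\Omega^{n,q}_c(X)$ with $1\leq q\leq n$,
\[
(-\sqrt{-1}Tu\,|\,u)\leq\tfrac{1}{q}\bigl(\|\ddbar_b u\|^2+\|\ddbar_b^* u\|^2\bigr).
\]
The strategy is: first extend this inequality to the relevant $L^2$-domain via the completeness of $g_X$; then restrict to spectral subspaces of $\sqrt{-1}T$ where $-\sqrt{-1}T$ is bounded below by a strictly positive constant $-\lambda$; finally conclude that any element of $\Ker\square_b$ in this spectral subspace must vanish.

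Concretely, let $u\in\Ker\square_b\cap\cE(\lambda,\sqrt{-1}T)\cap L^2_{n,q}(X)$. From $\square_b u=0$ and the identity $(\square_b u\,|\,u)=\|\ddbar_b u\|^2+\|\ddbar_b^* u\|^2$ (valid on $\Dom\square_b$), we have $\ddbar_b u=0$ and $\ddbar_b^* u=0$. To apply the Bochner estimate one needs $u\in\Dom(\sqrt{-1}T)$ as well, which is not automatic for elements of $\cE(\lambda,\sqrt{-1}T)$ (the spectrum may stretch to $-\infty$). I would therefore introduce the truncations $u_M:=Q_{[-M,\lambda]}u$ for $M>0$; by Proposition \ref{p-gue201112yyd} the projections $Q_{[-M,\lambda]}$ commute with $\ddbar_b,\ddbar_b^*$, so $\ddbar_b u_M=0$, $\ddbar_b^* u_M=0$, and since the spectrum of $\sqrt{-1}T$ on $u_M$ is contained in the bounded interval $[-M,\lambda]$, we also have $u_M\in\Dom(\sqrt{-1}T)$.

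Next, I would extend Corollary \ref{vanish_nq_1} to $L^2$ exactly as in Theorem \ref{t-gue201108yyd}: completeness of $g_X$ and Lemma \ref{l-gue201108yydI} provide a sequence $\{v_j\}\subset\Omega^{n,q}_c(X)$ approximating $u_M$ simultaneously in the graph norms of $\ddbar_b$, $\ddbar_b^*$, and $\sqrt{-1}T$; applying the smooth estimate to $v_j$ and passing to the limit yields
\[
(-\sqrt{-1}T u_M\,|\,u_M)\leq \tfrac{1}{q}\bigl(\|\ddbar_b u_M\|^2+\|\ddbar_b^* u_M\|^2\bigr)=0.
\]
On the other hand, the spectral theorem applied to $u_M\in\cE([-M,\lambda],\sqrt{-1}T)$ gives $(\sqrt{-1}T u_M\,|\,u_M)\leq\lambda\|u_M\|^2$, i.e.\ $(-\sqrt{-1}T u_M\,|\,u_M)\geq -\lambda\|u_M\|^2$. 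Combining the two inequalities and using $-\lambda>0$ (because $\lambda<0$) forces $\|u_M\|^2=0$. Letting $M\to\infty$, $u_M\to u$ in $L^2$, so $u=0$.

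The only genuine technical point is the $L^2$-extension of the Bochner-type inequality from smooth compactly supported $(n,q)$-forms to $u_M$; this is handled by the same Andreotti--Vesentini density argument used in the proof of Theorem \ref{t-gue201108yyd}, so it presents no new difficulty. All the other steps (commuting $Q_{[-M,\lambda]}$ with the differentials, invoking the spectral inequality, and passing to the limit $M\to\infty$) are essentially formal consequences of Section 4.2.
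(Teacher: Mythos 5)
Your proposal is correct and takes essentially the same route as the paper: the smooth compactly supported estimate of Corollary~\ref{vanish_nq_1}, the Andreotti--Vesentini density argument furnished by completeness, and the spectral lower bound $(-\sqrt{-1}Tv\,|\,v)\geq-\lambda\|v\|^2$ on $\cE(\lambda,\sqrt{-1}T)$. The only (harmless) difference is that you resolve the domain issue for $\sqrt{-1}T$ by first passing to the kernel element and truncating with $Q_{[-M,\lambda]}$ (the device used in Lemma~\ref{l-gue201112yydh}), whereas the paper establishes the inequality $-\lambda\|v\|^2\leq\frac{1}{q}(\|\ddbar_bv\|^2+\|\ddbar^*_bv\|^2)$ for all $v\in\Dom\ddbar_b\cap\Dom\ddbar^*_b\cap\cE(\lambda,\sqrt{-1}T)\cap L^2_{n,q}(X)$ by approximating with $Q_\lambda v_m$, $v_m$ smooth with compact support, before specializing to $\Ker\square_b$; both orderings of the limits are valid.
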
   
\begin{proof}
From Corollary \ref{vanish_nq_1}, we have for any $u\in \Omega_c^{n,q}(X)$,
\be
\left(-\sqrt{-1}Tu|u\right)\leq 
\frac{1}{q}\left(\|\ddbar_b u\|^2+\|\ddbar^*_b u\|^2\right).
\ee
Since $X$ is complete, $\sqrt{-1}T$ is self-adjoint, and for 
any $u\in\Dom \sqrt{-1}T\cap\Dom\ddbar_b\cap\Dom\ddbar_b^*$ 
there exists a sequence $\{u_m\}_{m\geq 1}\subset \Omega_c^{\bullet,\bullet}(X)$ 
converges to $u$ with respect to the graph norm of $\ddbar_b+\ddbar_b^*+\sqrt{-1}T$. 
Thus, for any $u\in\Dom(\ddbar_b)\cap\Dom(\ddbar^*_b)
\cap\Dom(\sqrt{-1}T)\cap\cE(\lambda,\sqrt{-1}T)\cap L^2_{n,q}(X)$, 
we obtain
\be
-\lambda\|u\|^2\leq\left(-\sqrt{-1}Tu|u\right)\leq 
\frac{1}{q}\left(\|\ddbar_b u\|^2+\|\ddbar^*_b u\|^2\right).
\ee
Assume $-\lambda>0$. Let $v\in \Dom(\ddbar_b)\cap\Dom(\ddbar_b)\cap
\cE(\lambda,\sqrt{-1}T)\cap L^2_{n,q}(X)$. By the completeness of $X$, 
there exists a sequence $\{ v_m \}_{m\geq 1}\in\Omega_c^{n,q}(X)$ such 
that $\|\ddbar_bv-\ddbar_bv_m\|+
\|\ddbar^*_bv-\ddbar^*_bv_m\|+\|v-v_m\|\rightarrow 0$ as 
$n\rightarrow \infty$. It is clear that $Q_\lambda v_m\in\cE(\lambda,\sqrt{-1}T)$, 
and
\be
\|\ddbar_bQ_\lambda v_m\|=\|Q_\lambda\ddbar_b v_m\|\leq 
\|\ddbar_b v_m\|<\infty,\\
\|\ddbar^*_bQ_\lambda v_m\|=
\|Q_\lambda\ddbar^*_b v_m\|\leq \|\ddbar^*_b v_m\|<\infty,\\
\|\sqrt{-1}TQ_\lambda v_m\|=
\|Q_\lambda\sqrt{-1}Tv_m\|\leq \|\sqrt{-1}Tv_m\|< \infty.
\ee
Here the last inequality follows from that 
$Q_\lambda(\sqrt{-1}T)\subset (\sqrt{-1}T)Q_\lambda$, i.e., 
$Q_\lambda\Dom(\sqrt{-1}T)\subset \Dom(\sqrt{-1}T)$ and 
$Q_\lambda(\sqrt{-1}T)= (\sqrt{-1}T)Q_\lambda$ on $\Dom(\sqrt{-1}T)$. 
Thus, we conclude that $Q_\lambda v_m\in\Dom(\ddbar_b)\cap\Dom(\ddbar^*_b)\cap\Dom(\sqrt{-1}T)\cap\cE(\lambda,\sqrt{-1}T)\cap 
L^2_{n,q}(X)$ for $m=1,2,\cdots$. Moreover, 
\be  
\begin{split}
-\lambda\|Q_\lambda v_m\|^2
&\leq \frac{1}{q}\left(\|\ddbar_b Q_\lambda v_m\|^2+
\|\ddbar^*_b Q_\lambda v_m\|^2\right)\\
&=\frac{1}{q}\left(\| Q_\lambda\ddbar_b v_m\|^2+
\| Q_\lambda\ddbar_b^* v_m\|^2\right)\\
&\leq \frac{1}{q}\left(\|\ddbar_b v_m\|^2+\|\ddbar^*_b v_m\|^2\right)
\end{split}
\ee
Note that $\|v-v_m\|^2=\|v-Q_\lambda v_m\|^2+
\|(Q_\lambda-1) v_m\|^2\rightarrow 0$ as $m\rightarrow \infty$. 
We obtain 
\be
-\lambda\|v\|^2\leq \frac{1}{q}\left(\|\ddbar_b v\|^2+\|\ddbar^*_b v\|^2\right)
\ee
for all $v\in \Dom(\ddbar_b)\cap\Dom(\ddbar_b^*)\cap
\cE(\lambda,\sqrt{-1}T)\cap L^2_{n,q}(X)$. Finally, we use $-\lambda>0$.
\end{proof}  
We obtain a CR counterpart of the Kodaira-Serre vanishing theorem 
\cite[Theorem 1.5.6]{MM} as follows.
\begin{cor}
Assume that $2\sqrt{-1}\cL=\Theta_X$, $g_X$ is complete and let $C\geq 0$ such that	
\be
2\sqrt{-1}\cL=\Theta_X, \quad \sqrt{-1}R^{K_X^*}\geq -C\Theta_X.
\ee 
Let $\lambda<-C$ and $1\leq q\leq n$. Then, we have
	\be
	\Ker\square_b\cap \cE(\lambda,\sqrt{-1}T)\cap L^2_{0,q}(X)=0.   
	\ee  
\end{cor}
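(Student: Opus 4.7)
\medskip

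My plan is to reduce the vanishing claim to the $L^2$ Nakano-type inequality of Theorem~\ref{t-gue201108yyd} (which applies here since $2\sqrt{-1}\cL=\Theta_X$ and $\sqrt{-1}R^{K^*_X}\geq -C\Theta_X$), combined with the spectral lower bound on $-\sqrt{-1}T$ coming from the assumption $u\in\cE(\lambda,\sqrt{-1}T)$. Suppose $u\in\Ker\square_b\cap\cE(\lambda,\sqrt{-1}T)\cap L^2_{0,q}(X)$. From the definition of the Gaffney extension \eqref{e:gaf1}--\eqref{e:gaf2}, the identity $(\square_b u\,|\,u)=\|\ddbar_b u\|^2+\|\ddbar_b^*u\|^2$ gives $\ddbar_b u=0$ and $\ddbar_b^*u=0$.

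The main obstacle is that Theorem~\ref{t-gue201108yyd} requires $u\in\Dom(\sqrt{-1}T)$, which need not hold for a general element of $\cE(\lambda,\sqrt{-1}T)$ since $-s$ is unbounded on $(-\infty,\lambda]$. To get around this, for $M>0$ I would introduce the spectral truncation $u_M:=Q_{[-M,\lambda]}u$. Because $Q_{[-M,\lambda]}$ is a function of $\sqrt{-1}T$ whose symbol is bounded, $u_M$ lies in $\Dom(\sqrt{-1}T)$. By Proposition~\ref{p-gue201112yyd} the projection $Q_{[-M,\lambda]}$ commutes with $\ddbar_b$ and $\ddbar_b^*$ on their respective domains, so $u_M\in\Dom\ddbar_b\cap\Dom\ddbar_b^*$ and
\begin{equation*}
\ddbar_bu_M=Q_{[-M,\lambda]}\ddbar_bu=0,\qquad \ddbar_b^*u_M=Q_{[-M,\lambda]}\ddbar_b^*u=0.
\end{equation*}
Thus $u_M$ meets every hypothesis of Theorem~\ref{t-gue201108yyd}.

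Applying Theorem~\ref{t-gue201108yyd} to $u_M$ and using $\ddbar_bu_M=\ddbar_b^*u_M=0$ yields
\begin{equation*}
(\,-\sqrt{-1}Tu_M\,|\,u_M\,)\leq C\|u_M\|^2.
\end{equation*}
On the other hand, since $u_M$ is spectrally supported in $[-M,\lambda]$, the spectral theorem for the self-adjoint operator $\sqrt{-1}T$ (Lemma~\ref{l-gue201108yyd}) gives, using the unitary $U$ from Section~\ref{s-gue201111yydI},
\begin{equation*}
(\,-\sqrt{-1}Tu_M\,|\,u_M\,)=\int_{[-M,\lambda]\times\N}(-s)\,|U(u_M)(s,n)|^2\,d\mu(s,n)\geq -\lambda\,\|u_M\|^2.
\end{equation*}
Combining the two inequalities gives $(-\lambda-C)\|u_M\|^2\leq 0$. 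Since $\lambda<-C$ by assumption, $-\lambda-C>0$, hence $u_M=0$ for every $M>0$.

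Finally, by the spectral theorem $u_M=Q_{[-M,\lambda]}u\to Q_\lambda u=u$ in $L^2_{0,q}(X)$ as $M\to\infty$ (recall $u\in\cE(\lambda,\sqrt{-1}T)$, so $Q_\lambda u=u$). Therefore $u=0$, which is the desired vanishing. The only non-routine point is the handling of the domain issue just described; once the truncation $u_M$ is available the proof parallels the argument already given for the preceding corollary.
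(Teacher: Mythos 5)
Your proof is correct and follows essentially the same route as the paper: the CR Nakano-type estimate of Theorem~\ref{t-gue201108yyd} combined with the spectral lower bound $(-\sqrt{-1}Tv\,|\,v)\geq-\lambda\|v\|^2$ on $\cE(\lambda,\sqrt{-1}T)$, and the conclusion from $-\lambda-C>0$. The only difference is technical: to get around the fact that elements of $\cE(\lambda,\sqrt{-1}T)$ need not lie in $\Dom(\sqrt{-1}T)$, you use the spectral truncation $u_M=Q_{[-M,\lambda]}u$ (exactly the device of Lemma~\ref{l-gue201112yydh}), whereas the paper approximates by compactly supported smooth forms $v_m$ and projects with $Q_\lambda$; both are valid, and your version is arguably cleaner here since you exploit $\ddbar_bu=\ddbar_b^*u=0$ from the outset.
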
     

\begin{proof}
	By Corollary \ref{vanish_0q_1}, we have
	for any $u\in \Omega_c^{0,q}(X)$ with $1\leq q\leq n$, we have
	\be
	\left(-\sqrt{-1}Tu|u\right)\leq \frac{1}{q}\left(\|\ddbar_b u\|^2+\|\ddbar_b^* u\|^2\right)
	+C\|u\|^2.
	\ee
	Since $X$ is complete, $\sqrt{-1}T$ is self-adjoint, and for any $u\in\Dom \sqrt{-1}T\cap\Dom\ddbar_b\cap\Dom\ddbar_b^*$ there exists a sequence $\{u_m\}_{m\geq 1}\subset \Omega_c^{\bullet,\bullet}(X)$ converges to $u$ with respect to the graph norm of $\ddbar_b+\ddbar_b^*+\sqrt{-1}T$. Thus, for $u\in\Dom(\ddbar_b)\cap\Dom(\ddbar^*_b)\cap\Dom(\sqrt{-1}T)\cap\cE(\lambda,\sqrt{-1}T)\cap L^2_{0,q}(X)$, we have
	\be
	-\lambda\|u\|^2\leq\left(-\sqrt{-1}Tu|u\right)\leq \frac{1}{q}\left(\|\ddbar_b u\|^2+\|\ddbar^*_b u\|^2\right)+C\|u\|^2.
	\ee
	
	Assume $-\lambda>C$. Let $v\in \Dom(\ddbar_b)\cap\Dom(\ddbar_b)\cap\cE(\lambda,\sqrt{-1}T)\cap L^2_{0,q}(X)$. By the completeness of $X$, there exists a sequence $\{ v_m \}_{m\geq 1}\in\Omega_c^{0,q}(X)$ such that $\|\ddbar_bv-\ddbar_bv_m\|+\|\ddbar^*_bv-\ddbar^*_bv_m\|+\|v-v_m\|\rightarrow 0$ as $n\rightarrow \infty$. It is clear that $Q_\lambda v_m\in\cE(\lambda,\sqrt{-1}T)$, and
	\be
	\|\ddbar_bQ_\lambda v_m\|=\|Q_\lambda\ddbar_b v_m\|\leq \|\ddbar_b v_m\|<\infty,\\
	\|\ddbar^*_bQ_\lambda v_m\|=\|Q_\lambda\ddbar^*_b v_m\|\leq \|\ddbar^*_b v_m\|<\infty,\\
	\|\sqrt{-1}TQ_\lambda v_m\|=\|Q_\lambda\sqrt{-1}Tv_m\|\leq \|\sqrt{-1}Tv_m\|< \infty.
	\ee
	Here the last inequality follows from that $Q_\lambda(\sqrt{-1}T)\subset (\sqrt{-1}T)Q_\lambda$, i.e., $Q_\lambda\Dom(\sqrt{-1}T)\subset \Dom(\sqrt{-1}T)$ and $Q_\lambda(\sqrt{-1}T)= (\sqrt{-1}T)Q_\lambda$ on $\Dom(\sqrt{-1}T)$. 
	Thus, we conclude that $Q_\lambda v_m\in\Dom(\ddbar_b)\cap\Dom(\ddbar^*_b)\cap\Dom(\sqrt{-1}T)\cap\cE(\lambda,\sqrt{-1}T)\cap L^2_{0,q}(X)$ for $m=1,2,\cdots$. Moreover,
\be
\begin{split}
(-\lambda-C)\|Q_\lambda v_m\|^2
&\leq \frac{1}{q}\left(\|\ddbar_b Q_\lambda v_m\|^2+\|\ddbar^*_b Q_\lambda v_m\|^2\right)\\
&=\frac{1}{q}\left(\| Q_\lambda\ddbar_b v_m\|^2+\| Q_\lambda\ddbar_b^* v_m\|^2\right)\\
&\leq \frac{1}{q}\left(\|\ddbar_b v_m\|^2+\|\ddbar^*_b v_m\|^2\right)
\end{split}
\ee
Note $\|v-v_m\|^2=\|v-Q_\lambda v_m\|^2+\|(Q_\lambda-1) v_m\|^2\rightarrow 0$ as $m\rightarrow \infty$. We obtain
	\be
	(-\lambda-C)\|v\|^2\leq \frac{1}{q}\left(\|\ddbar_b v\|^2+\|\ddbar^*_b v\|^2\right)
	\ee
	for $v\in \Dom(\ddbar_b)\cap\Dom(\ddbar_b)\cap\cE(\lambda,\sqrt{-1}T)\cap L^2_{0,q}(X)$. Finally, we use $-\lambda>C$.
\end{proof}

We note that the pervious vanishing theorems on CR
manifolds imply some classical vanishing theorems
for complete K\"ahler manifolds.
\begin{cor}[Andreotti-Vesentini]
Let $(M,\omega)$ be a complete K\"ahler manifold
and let $(L,h^L)\to M$ be a Hermitian holomorphic
line bundle such that $\sqrt{-1}R^L=\omega$
and there is $C>0$ such that 
$\sqrt{-1}R^{K^*_M}_{\omega}\geq-C\omega$ on $M$.
Then there exists $m_0\in\N$ such that for every $m\geq m_0$
we have $H^q_{(2)}(M,L^m)=0$ for $q\geq1$.
\end{cor}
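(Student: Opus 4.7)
The plan is to reduce the complex-analytic vanishing statement to the CR Kodaira--Serre vanishing theorem established just above, by passing to the unit circle bundle of $L^*$ and using Fourier decomposition along the $S^1$-fibers. Let $X=\{v\in L^*:\abs{v}^2_{h^{L^*}}=1\}$ with its inherited CR structure, the connection $1$-form $\omega_0$ satisfying $d\omega_0=\pi^*\omega$, infinitesimal generator $T=\partial_\theta$, and the $\R$-invariant metric $g_X=\pi^*g_M\oplus d\theta^2/(2\pi)$ of Example~\ref{ex:GT}. Since $g_M$ is complete, so is $g_X$; the $S^1$-action is transversal and CR. Because $d\omega_0=\pi^*\omega$, the Levi form on $X$ is, up to the standard identification, the pullback of $\omega$ to $T^{1,0}X$, so we may choose $\Theta_X$ compatible with $g_X$ so that $2\sqrt{-1}\cL=\Theta_X$. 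Hence the hypotheses of Theorem~\ref{t-gue201108yyd} and its corollaries are in force.

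The next step is to transfer the curvature bound. On a BRT chart of $X$ around a point $p\in M$ one computes, using a $T$-invariant local orthonormal $(1,0)$-frame obtained by lifting a frame of $T^{1,0}M$, that $R^{K^*_X}=\pi^* R^{K^*_M}_\omega$ as $(1,1)$-forms. Combined with $\Theta_X|_{T^{1,0}X}=\pi^*\omega$, the assumption $\sqrt{-1}R^{K^*_M}_\omega\geq -C\omega$ gives
\begin{equation*}
\sqrt{-1}R^{K^*_X}\geq -C\,\Theta_X\quad\text{on }X,
\end{equation*}
with the same constant $C$.

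Now decompose the $L^2$ spaces by $S^1$-isotype: $L^2_{0,q}(X)=\bigoplus_{m\in\Z}L^{2,m}_{0,q}(X)$, where $L^{2,m}_{0,q}(X)$ consists of forms $u$ with $(e^{i\theta})^*u=e^{im\theta}u$. Pairing an element of $L^{2,m}_{0,q}(X)$ with a fixed $T$-invariant local frame of $L^m$ (under the natural identification of $(0,q)$-forms with horizontal $(0,q)$-forms on $X$) gives a unitary isomorphism
\begin{equation*}
L^{2,m}_{0,q}(X)\;\simeq\; L^2_{0,q}(M,L^m),
\end{equation*}
that intertwines $\ddbar_b$ with $\ddbar_{L^m}$ and their formal adjoints, hence intertwines $\square^{(q)}_b$ with the $\ddbar$-Laplacian on $L^m$-valued forms. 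On the $m$-th isotype, $T$ acts as $im\cdot\Id$, so $\sqrt{-1}T=-m\cdot\Id$, which means $L^{2,m}_{0,q}(X)\subset\cE(-m,\sqrt{-1}T)$.

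Finally, choose any $m_0\in\N$ with $m_0>C$. For $m\geq m_0$ and $1\leq q\leq n$, the space $L^{2,m}_{0,q}(X)$ lies in $\cE(\lambda,\sqrt{-1}T)$ for some $\lambda\leq -m_0<-C$, so the CR Kodaira--Serre vanishing theorem proved above gives $\Ker\square_b^{(q)}\cap L^{2,m}_{0,q}(X)=\{0\}$. Translating back through the unitary isomorphism yields $H^q_{(2)}(M,L^m)=0$. The main obstacle is bookkeeping: one must verify cleanly that the metric on $X$ can be normalized so that $2\sqrt{-1}\cL=\Theta_X$ while preserving completeness and $\R$-invariance, and that the curvature identity $R^{K^*_X}=\pi^*R^{K^*_M}_\omega$ is intertwined correctly by the isomorphism $L^{2,m}_{0,q}(X)\simeq L^2_{0,q}(M,L^m)$; once these identifications are pinned down, the reduction to the CR theorem above is immediate.
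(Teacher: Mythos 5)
Your argument is essentially the paper's proof: pass to the circle bundle $X$ of Example~\ref{ex:GT}, identify $L^2_{0,q}(M,L^m)$ with the $m$-equivariant forms $L^2_{0,q}(X)_m=\cE^{(q)}(-m,\sqrt{-1}\partial_\theta)$ so that the $\ddbar$-complex on $L^m$ is intertwined with the $\ddbar_b$-complex on the isotype, and then invoke the $L^2$ estimate on $\cE(\lambda,\sqrt{-1}T)$ for $\lambda$ sufficiently negative. The only cosmetic difference is that you quote the CR Kodaira--Serre vanishing corollary (which gives only $\Ker\square_b=0$ on the isotype) whereas the paper quotes Theorem~\ref{t-gue201113yyd}, which in addition gives closed range of $\square^{(q)}_{b,\lambda}$ and hence the vanishing of the full (non-reduced) $L^2$-cohomology rather than just the harmonic space; since both statements flow from the same estimate \eqref{e-gue201112yydh}, this is not a substantive gap.
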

\begin{proof}
We apply the previous results for the CR manifold $X$
constructed in Example \ref{ex:GT}.
In this case $T=\partial_\theta$. 
For $m\in\Z$, the space $L^2_{0,q}(M,L^m)$ is isometric to the space
of $m$-equivariant $L^2$ forms on $X$,
$L^2_{0,q}(X)_m=\{u\in L^2_{0,q}(X): (e^{i\theta})^*u=e^{im\theta}u,
\text{for any $e^{i\theta}\in S^1$}\}$.
Note that $L^2_{0,q}(X)_m=\cE^{(q)}(-m,\sqrt{-1}\partial_\theta)$
and the $L^2$-Dolbeault complex $(L^2_{0,\bullet}(M,L^m),\ddbar)$
is isomorphic to the $\ddbar_b$-complex $(L^2_{0,\bullet}(X)_m,\ddbar_b)$.
Hence the assertion follows from Theorem \ref{t-gue201113yyd}.
\end{proof}

\subsection{Szeg\H{o} kernel asymptotic expansions}\label{s-gue201115yyd}

We first introduced some notations. Let $D\subset X$ be an open coordinate patch with local coordinates $x=(x_1,\ldots,x_{2n+1})$. 
Let $m\in\mathbb R$, $0\leq\rho,\delta\leq1$. Let $S^m_{\rho,\delta}(T^*D)$
denote the H\"{o}rmander symbol space on $T^*D$ of order $m$ type $(\rho,\delta)$
and let $S^m_{{\rm cl\,}}(T^*D)$
denote the space of classical symbols on $T^*D$ of order $m$, 
see  Grigis-Sj\"{o}strand~\cite[Definition 1.1 and p.\,35]{GS94} 
and Definition~\ref{d-gue201114yyd}.
Let
$L^m_{\rho,\delta}(D)$ and
$L^m_{{\rm cl\,}}(D)$
denote the space of pseudodifferential operators on $D$ of order $m$ type $(\rho,\delta)$
and the space of classical
pseudodifferential operators on $D$ of order $m$ respectively. 

Let $\Sigma$ be the characteristic manifold of $\Box_b$. We have 
\begin{equation}\label{e-gue201115yyds}
\begin{split}
&\Sigma=\Sigma^-\bigcup\Sigma^+,\\
&\Sigma^-=\{(x,-c\omega_0(x))\in T^*X;\, c<0\},\\
&\Sigma^+=\{(x,-c\omega_0(x))\in T^*X;\, c>0\}.
\end{split}
\end{equation}
We recall the following definition induced in~\cite[Definition 2.4]{HM16}

\begin{defn}\label{d-gue201115yyds}
Let $Q:L^2(X)\rightarrow L^2(X)$ be a continuous operator. 
Let $D\Subset X$ be an open local coordinate patch of $X$ with local coordinates 
$x=(x_1,\ldots,x_{2n+1})$ and let $\eta=(\eta_1,\ldots,\eta_{2n+1})$
be the dual variables of $x$.
We write
\[\mbox{$Q\equiv0$ at $\Sigma^-\cap T^*D$}\,,\]
if for every $D'\Subset D$, 
\[
Q(x,y)\equiv\int e^{i\langle x-y,\eta\rangle}q(x,\eta)d\eta\:\:\text{on $D'$},
\]
where $q(x,\eta)\in S^0_{1,0}(T^*D')$
and there exist $M>0$ and a conic open neighbourhood $\Lambda_-$ of $\Sigma^-$
such that for every $(x,\eta)\in T^*D'\cap\Lambda_-$ with $\abs{\eta}\geq M$, we have $q(x,\eta)=0$.
\end{defn} 

For a given point $x_0\in D$, let $\{W_j\}_{j=1}^{n}$ be an
orthonormal frame of $T^{1,0}X$ with respect to  $\langle\,\cdot\,|\,\cdot\,\rangle$ near $x_0$, for which the Levi form
is diagonal at $x_0$. Put
\begin{equation}\label{levi140530}
\cL_{x_0}(W_j,\ol W_\ell)=\mu_j(x_0)\delta_{j\ell}\,,\;\; j,\ell=1,\ldots,n\,.
\end{equation}
We will denote by
\begin{equation}\label{det140530}
\det\cL_{x_0}=\prod_{j=1}^{n}\mu_j(x_0)\,.
\end{equation}

We recall the following results in~\cite[Theorems 1.9, 5.1]{HM16}.

\begin{thm}\label{t-gue201115yyds}
Let $D\Subset X$ be an open coordinate patch with local coordinates $x=(x_1,\ldots,x_{2n+1})$. 
Let $Q:L^2(X)\rightarrow L^2(X)$ be a continuous operator  
and let $Q^*$ be the $L^2$ adjoint of $Q$
with respect to $(\,\cdot\,|\,\cdot\,)$. Suppose that $\Box^{(0)}_b$ has local $L^2$ closed range
on $D$ with respect to $Q$ and $QS^{(0)}=S^{(0)}Q$ on $L^2(X)$ and 
\[\mbox{$Q-Q_0\equiv 0$ at $\Sigma^-\bigcap T^*D$},\] 
where $Q_0\in L^0_{{\rm cl\,}}(D)$.
Then,
\begin{equation}\label{e-gue201115yydt}
(Q^*S^{(0)}Q)(x,y)\equiv\int^\infty_0e^{i\varphi(x,y)t}a(x,y,t)dt\ \ \mbox{on $D$},
\end{equation}
where 
\begin{equation}\label{e-gue201115yydw}
\begin{split}
&\varphi\in \cC^\infty(D\times D),\ \ {\rm Im\,}\varphi(x, y)\geq0,\\
&\varphi(x, x)=0,\ \ \varphi(x, y)\neq0\ \ \mbox{if}\ \ x\neq y,\\
&d_x\varphi(x, y)\big|_{x=y}=\omega_0(x), \ \ d_y\varphi(x, y)\big|_{x=y}=-\omega_0(x), \\
&\varphi(x, y)=-\ol\varphi(y, x),
\end{split}
\end{equation}
$a(x, y, t)\in S^{n}_{{\rm cl\,}}\big(D\times D\times\mathbb{R}_+\big)$
and the leading term $a_0(x,y)$ of the expansion \eqref{e-gue201114yydII} 
of $a(x,y,t)$ satisfies
\begin{equation}  \label{e-gue201115yydx}
a_0(x, x)=\frac{1}{2}\pi^{-n-1}
\abs{{\rm det\,}\mathcal{L}_x}\overline{q(x,\omega_0(x))}q(x,\omega_0(x)),\ \ \mbox{for all $x\in D$},
\end{equation}
where $\det\mathcal{L}_x$ is the determinant of the Levi form defined in \eqref{det140530}, 
$q(x,\eta)\in \cC^\infty(T^*D)$ is the principal symbol of $Q$.\end{thm}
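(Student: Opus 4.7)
The plan is to follow the microlocal scheme of Boutet de Monvel--Sj\"ostrand, as adapted to non-compact CR settings in~\cite{Hsi:10}, with the local $L^2$ closed range property playing the role that global spectral gap or compactness plays in the classical arguments.

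First, I would construct the complex phase $\varphi \in \cC^\infty(D \times D)$ with $\mathrm{Im}\,\varphi \geq 0$ satisfying \eqref{e-gue201115yydw}. This is a local geometric construction: $\varphi$ is determined modulo equivalence of phases by the requirement that it solve the eikonal equation for $\sigma(\Box_b^{(0)})$ to infinite order on the diagonal in the characteristic direction $\omega_0(x)$, which lies in $\Sigma^-$ under convention \eqref{e-gue201115yyds}. Existence and the strict positivity $\mathrm{Im}\,\varphi(x,y) > 0$ off the diagonal come from the H\"ormander/Menikoff--Sj\"ostrand construction at a positive characteristic submanifold, together with the strict positivity of the Levi form on $T^{1,0}X$.

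Second, I would construct a symbol $a \in S^n_{{\rm cl}}(D\times D \times \R_+)$ by recursively solving transport equations along the bicharacteristics of $\sigma(\Box_b^{(0)})$ at $\Sigma^-$, so that $A := \int_0^\infty e^{i\varphi t} a \, dt$ defines a complex FIO satisfying $\Box_b^{(0)} A \equiv 0$ and $\ddbar_b A \equiv 0$ on $D$, and such that $A$ behaves modulo smoothing like a projector concentrated microlocally at $\Sigma^-$. The leading symbol $a_0(x,x) = \tfrac{1}{2}\pi^{-n-1}|\det\cL_x|$ arises from a normal-form computation: after Fourier conjugation in the fiber variable dual to $T$, $\Box_b^{(0)}$ becomes a $t$-parametrised family of harmonic oscillators, and the ground-state density yields precisely this factor.

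Third --- and this is the main obstacle --- I would use the local closed range hypothesis to establish $Q^*(S^{(0)} - A)Q \equiv 0$ on $D$. Since $\Box_b^{(0)}$ is hypoelliptic at $\Sigma^+$, both $S^{(0)}$ and $A$ are smoothing there, so the whole issue concentrates at $\Sigma^-$. The estimate $\|Q(I-S^{(0)})u\|^2 \leq C((\Box_b^{(0)})^p u\,|\,u)$ combined with $\Box_b^{(0)} A \equiv 0$ forces $Q(I - S^{(0)})A$ to satisfy Sobolev estimates of arbitrary order on compact subsets, and the commutation $QS^{(0)} = S^{(0)}Q$ together with the FIO mapping properties of $A$ upgrades this into the microlocal equivalence $Q^*(I-S^{(0)})Q \equiv 0$ at $\Sigma^- \cap T^*D$. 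Converting an $L^2$ coercive bound into such a microlocal statement is delicate and requires iterating the closed range inequality in Sobolev scales together with a parametrix for $\Box_b^{(0)}$ off $\Sigma^-$.

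Finally, since $Q - Q_0 \equiv 0$ at $\Sigma^-\cap T^*D$ and the essential wavefront set of $A$ is contained there, the standard composition calculus gives $Q^* S^{(0)} Q \equiv Q_0^* A Q_0$ modulo smoothing on $D$. Applying the stationary phase method to the composition of the classical pseudodifferential operator $Q_0$ with the FIO $A$ produces an oscillatory integral with the same phase $\varphi$ and a classical symbol in $S^n_{{\rm cl}}(D\times D \times \R_+)$; computing its principal symbol on the diagonal picks up the factor $\overline{q(x,\omega_0(x))}\, q(x,\omega_0(x))$ from $Q_0^*$ and $Q_0$ evaluated at the characteristic covector $\omega_0(x)$, yielding \eqref{e-gue201115yydx}.
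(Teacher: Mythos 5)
The paper gives no proof of this statement: it is recalled verbatim from \cite[Theorems 1.9, 5.1]{HM16}, and the two main ingredients you describe --- the approximate Szeg\H{o} projector with phase $\varphi$ satisfying $\Box^{(0)}_b\tilde S\equiv0$ obtained from the eikonal and transport equations, and the reduction $Q^*S^{(0)}Q\equiv\tilde S^*Q^*Q\tilde S$ via the local $L^2$ closed range hypothesis --- are exactly what the paper restates as Theorems \ref{t-gue201116yyda} and \ref{t-gue201116yydb}, likewise cited from \cite{HM16}. Your outline therefore follows essentially the same route as the cited proof, up to the harmless relabelling of the FIO (your $A$ is the paper's $\tilde S$; the paper's $A$ is the order $-1$ parametrix) and the imprecise intermediate claim ``$Q^*(I-S^{(0)})Q\equiv0$ at $\Sigma^-$'', which should instead be the substitution of $\tilde S$ for $S^{(0)}$ inside the sandwich.
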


We refer the reader to~\cite[Theorems 3.3, 4.4]{HM16} for more properties for the phase $\varphi$ in \eqref{e-gue201115yydw}. 

Let $D=U\times I$ be a BRT chart with BRT coordinates $x=(x_1,\ldots,x_{2n+1})$. For $\lambda\in\mathbb R$, put 
\begin{equation}\label{e-gue201115ycdp}
\hat Q_{\tau_\lambda}:=(2\pi)^{-(2n+1)}\int e^{i<x-y,\eta>}\tau_\lambda(-\eta_{2n+1})d\eta\in L^0_{1,0}(D).
\end{equation}
It is not difficult to see that 
\begin{equation}\label{e-gue201115ycdq}
\hat Q_{\tau_\lambda}-I\equiv0\ \ \mbox{at $\Sigma^-\bigcap T^*D$}.
\end{equation}
Assume that the $\mathbb R$-action is free. From \eqref{e-gue201110yydII}, we see that $Q_{\tau_\lambda}=\hat Q_{\tau_\lambda}$ on $D$. From this 
observation, Theorem~\ref{t-gue201115yyd}, Theorem~\ref{t-gue201115yyds}, \eqref{e-gue201115ycdq} and notice that $Q^*_{\tau_\lambda}S^{(0)}Q_{\tau_\lambda}=Q_{\tau^2_\lambda}S^{(0)}$, where $Q^*_{\tau_\lambda}$ is the $L^2$ adjoint of $Q_{\tau_\lambda}$
with respect to $(\,\cdot\,|\,\cdot\,)$, we get 

\begin{thm}\label{t-gue201115ycda}
Suppose that the $\mathbb R$-action is free. 
Assume that $2\sqrt{-1}\cL$ is complete and there is $C>0$ such that 
\[\sqrt{-1}R^{K^*_X}_{\cL}\geq-2C\sqrt{-1}\cL,\ \ 
{(2\sqrt{-1}\cL)^n\wedge\omega_0\geq C\Theta_X^n\wedge\omega_0.}\]
Let $D=U\times I\Subset X$ be a BRT chart with BRT coordinates $x=(x_1,\ldots,x_{2n+1})$. 
Let $\lambda\in\mathbb R$, $\lambda\leq-2C$. Then, 
\begin{equation}\label{e-gue201115ycdk}
(Q_{\tau^2_\lambda}S^{(0)})(x,y)\equiv\int^\infty_0e^{i\varphi(x,y)t}s(x,y,t)dt\ \ \mbox{on $D$},
\end{equation}
where $\varphi\in\cC^\infty(D\times D)$ is as in \eqref{e-gue201115yydt}, $s(x, y, t)\in S^{n}_{{\rm cl\,}}\big(D\times D\times\mathbb{R}_+\big)$
and the leading term $s_0(x,y)$ of the expansion \eqref{e-gue201114yydII} 
of $s(x,y,t)$ satisfies
\begin{equation}  \label{e-gue201116yyd}
s_0(x, x)=\frac{1}{2}\pi^{-n-1}
\abs{{\rm det\,}\mathcal{L}_x},\ \ \mbox{for all $x\in D$}.
\end{equation} 
\end{thm}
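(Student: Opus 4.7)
The plan is to reduce the theorem to a direct application of Theorem \ref{t-gue201115yyds}, with $Q=Q_{\tau_\lambda}$ and $Q_0=\hat Q_{\tau_\lambda}\in L^0_{1,0}(D)$ from \eqref{e-gue201115ycdp}. Since the $\R$-action is free, the BRT chart extends to $\hat D=U\times\R$, and Lemma \ref{l-gue201110yyd} (specifically \eqref{e-gue201110yydII}) identifies $Q_{\tau_\lambda}$ acting on $\cC^\infty_c(D)$ with the pseudodifferential operator $\hat Q_{\tau_\lambda}$. This is the key structural input: the a priori purely spectral object $Q_{\tau_\lambda}$ now has a local symbol calculus available.

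I would then verify the three hypotheses of Theorem \ref{t-gue201115yyds}. Local $L^2$ closed range of $\Box^{(0)}_b$ on $D$ with respect to $Q_{\tau_\lambda}$ is exactly Theorem \ref{t-gue201115yyd}, which applies since $\lambda\leq-2C$ and the curvature/completeness hypotheses \eqref{e-gue201115yydI} are assumed. The commutation $Q_{\tau_\lambda}S^{(0)}=S^{(0)}Q_{\tau_\lambda}$ is \eqref{e-gue201115yyd}. The microlocal condition $Q_{\tau_\lambda}-\hat Q_{\tau_\lambda}\equiv 0$ at $\Sigma^-\cap T^*D$ is \eqref{e-gue201115ycdq}; its content is that on any conic neighborhood of $\Sigma^-=\{(x,t\omega_0(x)):t>0\}$, for $|\eta|$ large one has $\eta_{2n+1}$ large positive, so $-\eta_{2n+1}\leq 2\lambda$ and hence $\tau_\lambda(-\eta_{2n+1})=1$. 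Granting these inputs, Theorem \ref{t-gue201115yyds} yields
\[
(Q^*_{\tau_\lambda}S^{(0)}Q_{\tau_\lambda})(x,y)\equiv\int_0^\infty e^{i\varphi(x,y)t}a(x,y,t)\,dt \text{ on } D,
\]
with $a\in S^n_{{\rm cl\,}}(D\times D\times\R_+)$ whose leading coefficient satisfies $a_0(x,x)=\frac{1}{2}\pi^{-n-1}|\det\cL_x|\,\overline{q(x,\omega_0(x))}\,q(x,\omega_0(x))$ for $q$ the principal symbol of $\hat Q_{\tau_\lambda}$.

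Next I would identify the left-hand side with $Q_{\tau^2_\lambda}S^{(0)}$. Since $\tau_\lambda$ is real-valued, $Q_{\tau_\lambda}$ is self-adjoint; by the functional calculus for $\sqrt{-1}T$ one has $Q_{\tau_\lambda}\circ Q_{\tau_\lambda}=Q_{\tau^2_\lambda}$; and $Q_{\tau_\lambda}$ commutes with $S^{(0)}$ by \eqref{e-gue201115yyd}. These three facts combined give $Q^*_{\tau_\lambda}S^{(0)}Q_{\tau_\lambda}=Q_{\tau_\lambda}Q_{\tau_\lambda}S^{(0)}=Q_{\tau^2_\lambda}S^{(0)}$, which is the operator on the left of \eqref{e-gue201115ycdk}. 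For the leading coefficient, I would observe that $\omega_0(x)$ has last component equal to $1$, so along the ray $(x,t\omega_0(x))$ with $t\to\infty$ the symbol of $\hat Q_{\tau_\lambda}$ equals $\tau_\lambda(-t)$, which is identically $1$ once $t\geq -2\lambda$. Thus $q(x,\omega_0(x))=1$ and the formula for $s_0(x,x)$ in \eqref{e-gue201116yyd} follows.

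The proof is essentially assembly, so the main points of care are the two microlocal checks: (i) confirming that $\hat Q_{\tau_\lambda}$ qualifies as the classical pseudodifferential operator $Q_0$ needed by Theorem \ref{t-gue201115yyds}, including the identification of its principal symbol along $\Sigma^-$, and (ii) verifying unambiguously that $\tau_\lambda(-\eta_{2n+1})\equiv 1$ on a genuine conic neighborhood of $\Sigma^-$ for $|\eta|$ large (this forces the choice $\lambda\leq -2C<0$ to be consistent with the support description of $\tau_\lambda$). Both are elementary once one writes $\omega_0$ in BRT coordinates, but they are the only places where the geometric hypothesis on $\lambda$ interacts with the pseudodifferential calculus.
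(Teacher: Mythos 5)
Your proposal is correct and follows exactly the route the paper takes: identify $Q_{\tau_\lambda}$ with the pseudodifferential operator $\hat Q_{\tau_\lambda}$ via Lemma~\ref{l-gue201110yyd} in the free case, verify the local closed range (Theorem~\ref{t-gue201115yyd}), the commutation \eqref{e-gue201115yyd}, and the microlocal condition \eqref{e-gue201115ycdq}, then apply Theorem~\ref{t-gue201115yyds} together with $Q^*_{\tau_\lambda}S^{(0)}Q_{\tau_\lambda}=Q_{\tau^2_\lambda}S^{(0)}$ and $q(x,\omega_0(x))=1$. The paper's own proof is precisely this assembly, stated in one sentence; your write-up just supplies the details.
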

 
 We now assume that the $\mathbb R$-action is not free. From \eqref{e-gue201109yyd}, we know that the $\mathbb R$-action comes from a CR torus action 
$\mathbb T^d=(e^{i\theta_1},\ldots,e^{i\theta_d})$ on $X$ and $\omega_0$, $\Theta_X$ are $\mathbb T^d$ invariant. We will use the same notations as in the discussion before Proposition~\ref{p-gue201112yyd}. We need 

\begin{lemma}\label{l-gue201116yyd}
Suppose that the $\mathbb R$-action is not free. With the notations and assumptions used above, let $D=U\times I\Subset X$ be a BRT chart with BRT coordinates $x=(x',x_{2n+1})$, 
$x'=(x_1,\ldots,x_{2n})$. Fix $D_0\Subset D$ and $\lambda\in\mathbb R$. For $u\in\cC^\infty_c(D_0)$, we have 
\begin{equation}\label{e-gue201116yydI}
\begin{split}
&Q_{\tau_\lambda}u=\hat Q_{\tau_\lambda}u+\hat R_{\tau_\lambda}u\ \ \mbox{on $D_0$},\\
&(\hat R_{\tau_\lambda}u)(x)=\frac{1}{2\pi}\sum_{(m_1,\ldots,m_d)\in\mathbb Z^d}\:
\int\limits_{\mathbb T^d}e^{i\langle x_{2n+1}-y_{2n+1},\eta_{2n+1}\rangle+i(\sum^d_{j=1}m_j\beta_j)y_{2n+1}-im_1\theta_1-\ldots-im_d\theta_d}\\
&\times\tau_\lambda(-\eta_{2n+1}) (1-\chi(y_{2n+1}))u((e^{i\theta_1},\ldots,e^{i\theta_d})\circ x')d\mathbb T_dd\eta_{2n+1}dy_{2n+1}\ \ \mbox{on $D_0$}, 
\end{split}
\end{equation}
where $\chi\in\cC^\infty_c(I)$, $\chi(x_{2n+1})=1$ for every $(x',x_{2n+1})\in D_0$ and $\beta_1\in\mathbb R,\ldots,\beta_d\in\mathbb R$ are as in \eqref{e-gue201116yyda}.
\end{lemma}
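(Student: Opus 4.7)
The strategy is to reduce the formula to a computation in BRT coordinates, where the $\mathbb T^d$-isotypic components of $u$ take an explicit product form, and then to identify $\hat R_{\tau_\lambda}u$ with the Fourier multiplier $\hat Q_{\tau_\lambda}$ applied to the difference between $u$ and its natural ``$\R$-flow extension'' in the $x_{2n+1}$ direction. By \eqref{e-gue201112yydI}, write $Q_{\tau_\lambda}u=\sum_{m\in\Z^d}\tau_\lambda(-m\!\cdot\!\beta)Q_mu$, where $m\!\cdot\!\beta:=\sum_jm_j\beta_j$. Each isotypic component $Q_mu$ is a character-$m$ eigenfunction for $\mathbb T^d$ and, via \eqref{e-gue201116yyda}, a $(-m\!\cdot\!\beta)$-eigenfunction of $\sqrt{-1}T$; since $T=\partial/\partial x_{2n+1}$ on $D$, this forces
\[
(Q_mu)(x',x_{2n+1})=g_m(x')\,e^{i(m\cdot\beta)x_{2n+1}}\ \text{on $D$},\quad g_m(x'):=\tfrac{1}{(2\pi)^d}\int_{\mathbb T^d}e^{-im\cdot\theta}u((e^{i\theta_1},\ldots,e^{i\theta_d})\circ x')\,d\theta,
\]
where $x'$ on the right is identified with the point $(x',0)\in D\subset X$. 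Smoothness of $u$ and integration by parts against powers of the flat Laplacian on $\mathbb T^d$ give rapid decay $|g_m(x')|\le C_N(1+|m|)^{-N}$ for every $N$, uniformly in $x'$.

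Define the extension $\tilde u(x',y_{2n+1}):=\sum_{m\in\Z^d}g_m(x')e^{i(m\cdot\beta)y_{2n+1}}$; by the rapid decay, $\tilde u\in\cC^\infty(U\times\R)$. The $L^2$ identity $u=\sum_mQ_mu$ holds pointwise on $D$, so $\tilde u=u$ on $D$. Spectrally, $(Q_{\tau_\lambda}u)(x)=\sum_m\tau_\lambda(-m\!\cdot\!\beta)g_m(x')e^{i(m\cdot\beta)x_{2n+1}}$ on $D_0$; on the other hand, the Fourier multiplier $\hat Q_{\tau_\lambda}$ of \eqref{e-gue201115ycdp} sends each pure phase $e^{i\alpha y_{2n+1}}$ to $\tau_\lambda(-\alpha)e^{i\alpha x_{2n+1}}$ (as an oscillatory-integral identity). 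Termwise application to the absolutely convergent expansion of $\tilde u$ therefore yields $Q_{\tau_\lambda}u=\hat Q_{\tau_\lambda}\tilde u$ on $D_0$.

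Choose $\chi\in\cC^\infty_c(I)$ with $\chi\equiv 1$ on a neighbourhood of the $x_{2n+1}$-projection of $\overline{D_0}$. Since $u$ is supported where $\chi=1$, we have $\chi u=u$; and since $\tilde u=u$ on $\mathrm{supp}\,\chi\subset I$, also $\chi\tilde u=\chi u=u$. Hence $\tilde u-u=(1-\chi)\tilde u$, and
\[
Q_{\tau_\lambda}u-\hat Q_{\tau_\lambda}u=\hat Q_{\tau_\lambda}\bigl((1-\chi)\tilde u\bigr).
\]
Inserting the definitions of $\tilde u$ and $g_m$ into the right-hand side and interchanging the rapidly convergent sum over $m$ with the $\theta$-, $y_{2n+1}$-, and oscillatory $\eta_{2n+1}$-integrals produces exactly the right-hand side of \eqref{e-gue201116yydI} with $d\mathbb T_d=(2\pi)^{-d}d\theta_1\cdots d\theta_d$; this expression then defines $\hat R_{\tau_\lambda}u$.

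The principal difficulty is making the identification $Q_{\tau_\lambda}u=\hat Q_{\tau_\lambda}\tilde u$ rigorous, since it juxtaposes the global spectral functional calculus of $\sqrt{-1}T$ on $X$ with an $\R^{2n+1}$-Fourier multiplier acting on a non-compactly-supported quasi-periodic function. This is handled by combining the uniform rapid decay of $g_m$, which makes $\tilde u$ smooth and all its termwise Fourier-multiplier transforms absolutely convergent, with a direct oscillatory-integral check that $\hat Q_{\tau_\lambda}(e^{i\alpha y_{2n+1}})=\tau_\lambda(-\alpha)e^{i\alpha x_{2n+1}}$. The same rapid decay estimate justifies Fubini in the final interchange of sum and integrals.
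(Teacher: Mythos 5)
Your proof is correct and follows essentially the same route as the paper: both decompose $Q_{\tau_\lambda}u$ into $\mathbb T^d$-isotypic components via \eqref{e-gue201112yydI}, exploit that each component is a pure phase $e^{i(m\cdot\beta)x_{2n+1}}$ in the BRT fiber variable, and recover the full spectral projector from the $\eta_{2n+1}$-multiplier $\hat Q_{\tau_\lambda}$ plus the correction carried by the cutoff $1-\chi$. Your quasi-periodic extension $\tilde u$ and the identity $\hat Q_{\tau_\lambda}(e^{i\alpha y_{2n+1}})=\tau_\lambda(-\alpha)e^{i\alpha x_{2n+1}}$ are just a repackaging of the paper's use of $\int e^{i\alpha y_{2n+1}}e^{-iy_{2n+1}\eta_{2n+1}}dy_{2n+1}=2\pi\delta_\alpha(\eta_{2n+1})$ together with Fourier inversion.
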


\begin{proof}
We also write $y=(y',y_{2n+1})=(y_1,\ldots,y_{2n+1})$, $y'=(y_1,\ldots,y_{2n})$, to denote the BRT coordinates $x$.
Let $u\in\cC^\infty_c(D_0)$. From \eqref{e-gue201112yydI}, It is easy to see that on $D$,
\begin{equation}\label{e-gue131217}
\begin{split}
&Q_{\tau_\lambda}u(y)\\
&=\sum_{(m_1,\ldots,m_d)\in\mathbb Z^d}\tau_\lambda(-\sum^d_{j=1}m_j\beta_j)e^{i(\sum^d_{j=1}m_j\beta_j) y_{2n+1}}\times\\
&\quad\int_{\mathbb T^d}e^{-(im_1\theta_1+\ldots+im_d\theta_d)}
u((e^{i\theta_1},\ldots,e^{i\theta_d})\circ y')dT_d. 
\end{split}
\end{equation}
Now, we claim that
\begin{equation}\label{e-gue131217III}
\hat Q_{\tau_\lambda}+\hat R_{\tau_\lambda}=Q_{\tau_\lambda}\ \ \mbox{on $\cC^\infty_0(D_0)$}.
\end{equation}
Let $u\in\cC^\infty_0(D_0)$. From  Fourier inversion formula, it is straightforward to see that
\begin{equation}\label{e-gue131217IV}
\begin{split}
&\hat Q_{\tau_\lambda}u(x)\\
&=\frac{1}{2\pi}\sum_{(m_1,\ldots,m_d)\in\mathbb Z^d}
\int e^{i\langle x_{2n+1}-y_{2n+1},\eta_{2n+1}\rangle}
\tau_\lambda(-\eta_{2n+1})\chi(y_{2n+1})\\
&\quad\times e^{i(\sum^d_{j=1}m_j\beta_j)y_{2n+1}-im_1\theta_1-\ldots-im_d\theta_d}u((e^{i\theta_1},\ldots,e^{i\theta_d})\circ x')d\mathbb T_ddy_{2n+1}d\eta_{2n+1}.
\end{split}
\end{equation}
From \eqref{e-gue131217IV} and the definition of $\hat R_{\tau_\lambda}$, we have
\begin{equation}\label{e-gue131217V}
\begin{split}
&(\hat Q_{\tau_\lambda}+\hat R_{\tau_\lambda})u(x)\\
&=\frac{1}{2\pi}\sum_{(m_1,\ldots,m_d)\in\mathbb Z^d}
\int e^{i\langle x_{2n+1}-y_{2n+1},\eta_{2n+1}\rangle}
\tau_\lambda(-\eta_{2n+1})\\
&\quad\times e^{i(\sum^d_{j=1}m_j\beta_j)y_{2n-1}-im_1\theta_1-\ldots-im_d\theta_d}u((e^{i\theta_1},\ldots,e^{i\theta_d})\circ x')d\mathbb T_ddy_{2n+1}d\eta_{2n+1}.
\end{split}\end{equation}
Note that the following formula holds for every $\alpha\in\mathbb R$,
\begin{equation}\label{dm}
\int e^{i\alpha y_{2n+1}}e^{-iy_{2n+1}\eta_{2n+1}}dy_{2n+1}=2\pi\delta_\alpha(\eta_{2n+1}),
\end{equation}
where the integral is defined as an oscillatory integral and $\delta_\alpha$ is the Dirac measure at $\alpha$.
Using \eqref{e-gue131217}, \eqref{dm} and the Fourier inversion formula,
 \eqref{e-gue131217V} becomes
\begin{equation}\label{e-gue131217VI}
\begin{split}
(\hat Q_{\tau_\lambda}+\hat R_{\tau_\lambda})u(x)
=&\sum_{(m_1,\ldots,m_d)\in\mathbb Z^d}\tau_\lambda(-\sum^d_{j=1}m_j\beta_j)e^{i(\sum^d_{j=1}m_j\beta_j)x_{2n+1}}\times\\
&\int_{\mathbb T_d}e^{-im_1\theta_1-\ldots-im_d\theta_d}u((e^{i\theta_1},\ldots,e^{i\theta_d})\circ x')d\mathbb T_d\\
=&Q_{\tau_\lambda}u(x).
\end{split}\end{equation}
From \eqref{e-gue131217VI}, the claim \eqref{e-gue131217III} follows.
\end{proof}

To study $Q_{\tau^2_\lambda}S^{(0)}$ when the $\mathbb R$ is not free, we also need the following two known results~\cite[Theorems 3.2, 5.2]{HM16}

\begin{thm} \label{t-gue201116yyda}
We assume that the $\mathbb R$-action is arbitrary. 
Let $D\Subset X$ be a coordinate patch with local coordinates 
$x=(x_1,\ldots,x_{2n+1})$. Then there exist properly supported continuous operators 
$A\in  L^{-1}_{\frac{1}{2},\frac{1}{2}}(D)$ , 
$\tilde S\in L^{0}_{\frac{1}{2},\frac{1}{2}}(D)$, such that
\begin{equation}\label{e-gue201116yydb}
\begin{split}  
&\mbox{$\Box^{(0)}_bA+\tilde S=I$ on $D$},\\
&\mbox{$A^*\Box^{(0)}_b+\tilde S=I$ on $D$},\\
&\Box^{(0)}_b\tilde S\equiv0\ \ \mbox{on $D$},\\
&A\equiv A^*\ \ \mbox{on $D$},\ \ \tilde SA\equiv0\ \ \mbox{on $D$},\\
&\tilde S\equiv \tilde S^*\equiv\tilde S^2\ \ \mbox{on $D$},
\end{split}
\end{equation}
where $A^*$, $\tilde S^*$ are the formal adjoints of $A$, $\tilde S$ 
with respect to $(\,\cdot\,|\,\cdot\,)$ respectively and $\tilde S(x,y)$ satisfies
\begin{equation}\label{e-gue201116yydw}
\tilde S(x, y)\equiv\int^{\infty}_{0}e^{i\varphi(x, y)t}s(x, y, t)dt\ \ \mbox{on $D$},
\end{equation}
where $\varphi(x,y)\in\cC^\infty(D\times D)$ and $s(x, y, t)\in S^{n}_{{\rm cl\,}}(D\times D\times\mathbb{R}_+)$ 
are as in \eqref{e-gue201115ycdk}. 
\end{thm}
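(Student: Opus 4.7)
The plan is a microlocal parametrix construction for $\Box^{(0)}_b$ on the coordinate patch $D$, following the Boutet de Monvel-Sj\"ostrand scheme adapted to the CR setting. Since the statement is purely local and invokes no global hypothesis on $X$, it rests only on the strict pseudoconvexity of $X$ and the classical subelliptic estimate for $\Box^{(0)}_b$ with loss of one derivative.

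First I would decompose $T^*D \setminus 0$ into three conic open pieces via a homogeneous partition of unity: an elliptic region $\Lambda^{\rm ell}$ where $\sigma(\Box^{(0)}_b)$ does not vanish, and conic neighborhoods $\Lambda^+$ of $\Sigma^+$ and $\Lambda^-$ of $\Sigma^-$ with $\Lambda^+\cap\Lambda^-=\emptyset$. On $\Lambda^{\rm ell}$ a standard classical parametrix $A^{\rm ell}\in L^{-2}_{\rm cl}(D)$ inverts $\Box^{(0)}_b$ modulo smoothing. On $\Lambda^+$ the Kohn Laplacian on functions is microlocally hypoelliptic with loss of one derivative, so I would build $A^+\in L^{-1}_{1/2,1/2}(D)$ in BRT coordinates by reducing to the Heisenberg-type model via partial Fourier transform in $x_{2n+1}$, inverting that model explicitly, and then iterating a Neumann series within the calculus $S^m_{1/2,1/2}$.

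The main obstacle is the region $\Lambda^-$, where the microlocal kernel of $\Box^{(0)}_b$ is nontrivial and corresponds to CR functions. Here I would construct $\tilde S$ as a complex Fourier integral operator of Boutet de Monvel-Sj\"ostrand type $\int_0^\infty e^{i\varphi(x,y)t}s(x,y,t)\,dt$: the phase $\varphi$ is the unique solution (up to Melin-Sj\"ostrand equivalence) of the complex eikonal equation associated with the principal symbol of $\Box^{(0)}_b$ on $\Sigma^-$ together with the initial condition $d_x\varphi|_{x=y}=\omega_0(x)$, with ${\rm Im\,}\varphi\geq 0$ guaranteed by strict pseudoconvexity; the symbol $s\in S^n_{\rm cl}$ is then determined by a hierarchy of transport equations, its leading coefficient being fixed to the explicit value in \eqref{e-gue201116yyd} precisely by requiring $\tilde S^2\equiv\tilde S\equiv\tilde S^*$ and $\Box^{(0)}_b\tilde S\equiv 0$ modulo smoothing. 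A microlocal parametrix $A^-\in L^{-1}_{1/2,1/2}(D)$ on $\Lambda^-$ is then obtained by solving $\Box^{(0)}_b A^-\equiv I-\tilde S$ microlocally, which is possible because $I-\tilde S$ takes values in the microlocal range of $\Box^{(0)}_b$ there.

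Finally, patching $A^{\rm ell}$, $A^+$, $A^-$ with the partition of unity yields $A\in L^{-1}_{1/2,1/2}(D)$ with $\Box^{(0)}_b A+\tilde S\equiv I$ on $D$; replacing $A$ by $(A+A^*)/2$ enforces $A\equiv A^*$, and the companion identity $A^*\Box^{(0)}_b+\tilde S\equiv I$ follows by taking formal adjoints together with $\tilde S\equiv\tilde S^*$. The equalities (rather than $\equiv$) in the first two displays of \eqref{e-gue201116yydb} are arranged by absorbing the smoothing remainders into $\tilde S$ and $A$; such corrections are harmless since smoothing kernels fit trivially into the FIO representation by perturbing $s$ within $S^{-\infty}$. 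The remaining relations $\tilde S A\equiv 0$ and $\tilde S^2\equiv \tilde S$ follow from composing the complex Fourier integral operator $\tilde S$ with $A$ and with itself: a stationary-phase analysis along the critical set determined by $d_y\varphi(x,y)t + d_x\varphi(y,z)t'=0$, exploiting the properties of $\varphi$ in \eqref{e-gue201115yydw}, shows that $\tilde S A$ has symbol in $S^{-\infty}$ while $\tilde S^2$ reproduces $\tilde S$ modulo smoothing.
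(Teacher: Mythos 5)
First, a point of reference: the paper does not prove this statement at all. Theorem \ref{t-gue201116yyda} is imported verbatim from \cite[Theorems 3.2, 5.2]{HM16}, whose proof in turn rests on the local microlocal Hodge decomposition of \cite{Hsi:10}; so your proposal has to be measured against that source. At the level of architecture you have reconstructed it correctly: the conic splitting of $T^*D\setminus 0$ into an elliptic region and neighbourhoods of $\Sigma^{\pm}$, the classical parametrix off $\Sigma$, microlocal hypoellipticity of $\Box^{(0)}_b$ with loss of one derivative at $\Sigma^+$ for a strictly pseudoconvex structure, and a Boutet de Monvel--Sj\"ostrand type projector at $\Sigma^-$ with phase solving a complex eikonal equation and ${\rm Im\,}\varphi\geq 0$ --- this is exactly the skeleton of \cite{Hsi:10}. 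The concluding bookkeeping (symmetrizing $A$, absorbing smoothing errors to turn $\equiv$ into $=$, and the stationary-phase computation showing $\tilde S^2\equiv\tilde S$ and $\tilde SA\equiv0$) is likewise how the relations \eqref{e-gue201116yydb} are obtained there, although arranging both exact identities with the \emph{same} $\tilde S$ forces $\tilde S=\tilde S^*$ exactly and needs more care than your one sentence suggests.

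The genuine gap is the step you dispatch in a single clause: producing $A^-\in L^{-1}_{\frac{1}{2},\frac{1}{2}}(D)$ with $\Box^{(0)}_bA^-\equiv I-\tilde S$ microlocally near $\Sigma^-$ ``because $I-\tilde S$ takes values in the microlocal range of $\Box^{(0)}_b$ there.'' That justification is circular: identifying the microlocal range of $\Box^{(0)}_b$ at $\Sigma^-$, and showing that division by $\Box^{(0)}_b$ on the complement of the range of $\tilde S$ costs exactly one derivative --- so that $A^-$ lands in $L^{-1}_{\frac{1}{2},\frac{1}{2}}(D)$ and not in some weaker class --- is the analytic heart of the theorem, not a consequence of it. Similarly, determining $s$ from the transport hierarchy \emph{together with} the idempotency and self-adjointness constraints presupposes that this overdetermined system is consistently solvable, which is not formal. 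In \cite{Hsi:10} both difficulties are resolved at once by the Menikoff--Sj\"ostrand heat-equation method: one constructs $e^{-t\Box^{(0)}_b}$ as a Fourier integral operator with complex phase, obtains $\tilde S$ as the $t\to\infty$ limit and $A$ essentially as $\int_0^\infty\bigl(e^{-t\Box^{(0)}_b}-\tilde S\bigr)dt$, after which the relations \eqref{e-gue201116yydb}, the symbol class of $A$, and the form \eqref{e-gue201116yydw} of $\tilde S$ all fall out of the construction instead of having to be imposed. Your outline is a faithful road map, but without this (or an equivalent) device the two central existence claims remain unproved.
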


\begin{thm}\label{t-gue201116yydb}
Let us consider an arbitrary $\mathbb R$-action and 
let $Q:L^2(X)\rightarrow L^2(X)$ be a continuous operator  
and let $Q^*$ be the $L^2$ adjoint of $Q$
with respect to $(\,\cdot\,|\,\cdot\,)$. Suppose that $\Box^{(0)}_b$ 
has local $L^2$ closed range
on $D$ with respect to $Q$ and $QS^{(0)}=S^{(0)}Q$ on $L^2(X)$. 
Let $D$ be a coordinate patch with local coordinates $x=(x_1,\ldots,x_{2n+1})$. We have 
\begin{equation}\label{e-gue201116yyde}
\mbox{$Q^*S^{(0)}Q\equiv\tilde S^*Q^*Q\tilde S$ on $D$}, 
\end{equation}
where $\tilde S$ is as in Theorem~\ref{t-gue201116yyda}.
\end{thm}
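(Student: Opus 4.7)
The plan is to reduce $Q^{*}S^{(0)}Q\equiv \tilde S^{*}Q^{*}Q\tilde S$ on $D$ to two manageable claims: an algebraic insertion of the parametrix from Theorem~\ref{t-gue201116yyda}, and an analytic absorption of the leftover projection $(I-S^{(0)})$ using local closed range. Throughout I use freely that the commutation $QS^{(0)}=S^{(0)}Q$ yields $Q^{*}S^{(0)}=S^{(0)}Q^{*}$ by taking adjoints, and the self-adjointness of $\Box^{(0)}_{b}$ (Gaffney, \eqref{e:gaf1}--\eqref{e:gaf2}) together with $S^{(0)}\Box^{(0)}_{b}=0$ on $\Dom\Box^{(0)}_{b}$.

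For the algebraic step, I sandwich $Q^{*}S^{(0)}Q$ between two copies of the parametrix identity $I\equiv \tilde S+\Box^{(0)}_{b}A$ on $D$. For $u,v\in\cC^{\infty}_{c}(D)$ this gives
\begin{equation*}
(Q^{*}S^{(0)}Qu\mid v)=(S^{(0)}Qu\mid Qv)\equiv \bigl(S^{(0)}Q(\tilde Su+\Box^{(0)}_{b}Au)\bigm|Q(\tilde Sv+\Box^{(0)}_{b}Av)\bigr)\ \text{on}\ D,
\end{equation*}
the parametrix remainders $R_{1},R_{2}$ contributing compositions of the form $S^{(0)}Q^{*}QR_{i}$ whose range lies in $\ker\Box^{(0)}_{b}$ and is therefore smooth by the interior regularity of $\Box^{(0)}_{b}$, hence smoothing on $D$. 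Expanding the right-hand side yields four bilinear terms; in each term containing $\Box^{(0)}_{b}A$, one moves $S^{(0)}$ across $Q$ or $Q^{*}$ by commutation and applies $S^{(0)}\Box^{(0)}_{b}=0$, e.g.
\begin{equation*}
(S^{(0)}Q\tilde Su\mid Q\Box^{(0)}_{b}Av)=(S^{(0)}Q^{*}Q\tilde Su\mid \Box^{(0)}_{b}Av)=(\Box^{(0)}_{b}S^{(0)}Q^{*}Q\tilde Su\mid Av)=0.
\end{equation*}
Only the diagonal term $(\tilde S^{*}Q^{*}S^{(0)}Q\tilde Su\mid v)$ survives, giving $Q^{*}S^{(0)}Q\equiv \tilde S^{*}Q^{*}S^{(0)}Q\tilde S$ on $D$.

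For the analytic step, I show that $T:=\tilde S^{*}Q^{*}(I-S^{(0)})Q\tilde S=(Q\tilde S)^{*}(I-S^{(0)})(Q\tilde S)$ is smoothing on $D$, which then yields $\tilde S^{*}Q^{*}S^{(0)}Q\tilde S\equiv \tilde S^{*}Q^{*}Q\tilde S$ on $D$ and completes the proof. The operator $T$ is positive and self-adjoint. By the commutation $(I-S^{(0)})Q=Q(I-S^{(0)})$ and the local $L^{2}$ closed range hypothesis (Definition~\ref{d-gue201114yydf}, applied to $\tilde Su\in\cC^{\infty}_{c}(D'')$ for suitable $D''\Subset D$ via the proper support of $\tilde S$), for $u\in\cC^{\infty}_{c}(D')$ with $D'\Subset D$,
\begin{equation*}
(Tu\mid u)=\|Q(I-S^{(0)})\tilde Su\|^{2}\leq C\bigl((\Box^{(0)}_{b})^{p}\tilde Su\bigm|\tilde Su\bigr).
\end{equation*}
Since $\Box^{(0)}_{b}\tilde S\equiv 0$ on $D$, iterating gives that $(\Box^{(0)}_{b})^{k}\tilde S$ is smoothing on $D$ for every $k\geq 1$; distributing powers of $\Box^{(0)}_{b}$ via self-adjointness (with interpolation if $p=1$), the right-hand side is bounded by products of norms $\|(\Box^{(0)}_{b})^{k}\tilde Su\|_{L^{2}}$, each factor of which, coming from a smoothing operator on $D$, is controlled by $C_{K}\|u\|_{H^{-K}(D')}$ for every $K\in\N$. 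Polarization of the positive form then yields $|(Tu\mid v)|\leq C_{K}\|u\|_{H^{-K}(D')}\|v\|_{H^{-K}(D')}$, so $T\colon H^{-K}_{\mathrm{comp}}(D')\to H^{K}_{\mathrm{loc}}(D')$ is bounded for every $K$. Consequently the kernel of $T$ is smooth on $D'\times D'$, and as $D'\Subset D$ was arbitrary, $T$ is smoothing on $D$.

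The main obstacle is the analytic step: local closed range provides only an $L^{2}$ inequality, and converting this into kernel smoothness requires both the positivity of $T$ (to polarize the quadratic bound into a uniform bilinear estimate in negative Sobolev norms) and the observation that every power of $\Box^{(0)}_{b}$ absorbed against $\tilde S$ produces a smoothing operator. Step~1 is essentially formal once one recognizes that every occurrence of $\Box^{(0)}_{b}$ can be commuted next to $S^{(0)}$ and annihilated.
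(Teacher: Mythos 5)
Your argument is essentially correct, and since the paper does not prove this statement itself but quotes it from \cite[Theorems 3.2, 5.2]{HM16}, there is no in-paper proof to compare against; your two-step reduction (inserting the parametrix identity $I=\Box^{(0)}_bA+\tilde S$ on both sides of $S^{(0)}$, killing every term containing $\Box^{(0)}_bA$ via $S^{(0)}\Box^{(0)}_b=0$ together with the commutation $Q^*S^{(0)}=S^{(0)}Q^*$, and then showing $\tilde S^*Q^*(I-S^{(0)})Q\tilde S$ is smoothing from the local closed range estimate) is exactly the strategy one expects from the cited source. Two small points should be repaired. First, your fallback claim that parametrix remainders would be harmless because the range of $S^{(0)}$ ``is smooth by interior regularity of $\Box^{(0)}_b$'' is false: $\Box^{(0)}_b$ is not hypoelliptic on functions over a strictly pseudoconvex CR manifold (otherwise $S^{(0)}$ itself would be smoothing and the theorem trivial). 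Fortunately this claim is never needed, because Theorem~\ref{t-gue201116yyda} provides the \emph{exact} identities $\Box^{(0)}_bA+\tilde S=I$ and $A^*\Box^{(0)}_b+\tilde S=I$ on $D$, so there are no remainders; simply delete that sentence. Second, the ``interpolation if $p=1$'' step is too vague: bounding $((\Box^{(0)}_b)^p\tilde Su\,|\,\tilde Su)$ by $\|(\Box^{(0)}_b)^p\tilde Su\|\cdot\|\tilde Su\|$ only yields $C_K\|u\|_{H^{-K}}\|u\|_{L^2}$ when $p=1$, which is not enough for the $H^{-K}_{\mathrm{comp}}\to H^{K}_{\mathrm{loc}}$ conclusion. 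The clean fix, valid for every $p\geq1$, is to write $((\Box^{(0)}_b)^p\tilde Su\,|\,\tilde Su)=(\tilde S^*(\Box^{(0)}_b)^p\tilde Su\,|\,u)$ and note that $\tilde S^*(\Box^{(0)}_b)^p\tilde S$ is smoothing on $D$ (a properly supported order-zero operator composed with the smoothing operator $(\Box^{(0)}_b)^p\tilde S$, the latter smoothing because $\Box^{(0)}_b\tilde S\equiv0$ and $\Box^{(0)}_b$ is differential); this gives the bound $C_K\|u\|^2_{H^{-K}(D')}$ directly, after which your positivity and polarization argument goes through unchanged.
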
 

We need 

\begin{lemma}\label{l-gue201116yyds}
Suppose that the $\mathbb R$-action is not free. Fix $p\in X$. Let $D=U\times I$ be a BRT chart defined near $p$ with BRT coordinates $x=(x',x_{2n+1})$, $x'=(x_1,\ldots,x_{2n})$, $x(p)=0$. Fix $D_0\Subset D$, $p\in D_0$ and $\lambda\in\mathbb R$. Then, 
\[\tilde S\hat R_{\tau_\lambda}\equiv 0\ \ \mbox{on $D_0$},\]
where $\hat R_{\tau_\lambda}$ and $\tilde S$ are as in Lemma~\ref{l-gue201116yyd} and Theorem~\ref{t-gue201116yyda} respectively. 
\end{lemma}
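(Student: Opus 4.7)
The plan is to substitute the oscillatory integral representation \eqref{e-gue201116yydw} for the kernel of $\tilde S$ and the formula \eqref{e-gue201116yydI} for $\hat R_{\tau_\lambda}$ into the composition $\tilde S\hat R_{\tau_\lambda}$, and then show that the resulting distributional kernel on $D_0\times D_0$ is smooth by a stationary phase / integration-by-parts analysis, exploiting the crucial fact that the cutoff $(1-\chi(y_{2n+1}))$ vanishes on the critical set of the combined phase.

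After substitution, the kernel of $\tilde S\hat R_{\tau_\lambda}$ acting from $\cC^\infty_c(D_0)$ to distributions on $D_0$ takes the form of an oscillatory sum over $m\in\Z^d$ of integrals in the variables $(x,t,y_{2n+1},\eta_{2n+1},\theta)\in D\times\R_+\times\R\times\R\times\mathbb T^d$, with total phase
\[\Psi:=\varphi(z,x)t+(x_{2n+1}-y_{2n+1})\eta_{2n+1}+\alpha_m y_{2n+1}-\langle m,\theta\rangle,\qquad \alpha_m:=\sum_{j=1}^d m_j\beta_j,\]
and amplitude involving $s(z,x,t)\tau_\lambda(-\eta_{2n+1})(1-\chi(y_{2n+1}))$ together with a delta identifying the input point $p\in D_0$ with $(e^{i\theta})\circ x'$. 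The critical set of $\Psi$ in $(x,t,y_{2n+1},\eta_{2n+1})$ is easy to compute: $\partial_t\Psi=\varphi(z,x)=0$ forces $x=z$ by the properties \eqref{e-gue201115yydw} of $\varphi$, hence $x_{2n+1}=z_{2n+1}$; then $\partial_{\eta_{2n+1}}\Psi=x_{2n+1}-y_{2n+1}=0$ forces $y_{2n+1}=z_{2n+1}$. Since $z\in D_0$ and $\chi\equiv1$ on the $x_{2n+1}$-projection of $D_0$ by the choice of $\chi$, we have $(1-\chi(y_{2n+1}))=0$ at every such critical point.

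Next, I would split the $x$-integration into a near-diagonal piece (where $x$ is close to $z$, so in particular $\chi(x_{2n+1})=1$) and an off-diagonal piece (where $\tilde S(z,x)$ is already smooth by the standard off-diagonal smoothness of Fourier integral operators with phase vanishing only on the diagonal). On the near-diagonal piece, the amplitude is supported where $x_{2n+1}\in\{\chi=1\}$ while $y_{2n+1}\in\supp(1-\chi)$, so $|x_{2n+1}-y_{2n+1}|$ is bounded below by a positive constant; repeated integration by parts in $\eta_{2n+1}$ using $\partial_{\eta_{2n+1}}\Psi=x_{2n+1}-y_{2n+1}$ then yields arbitrary decay in $\eta_{2n+1}$, so the $(x,y_{2n+1})$-integrated expression is smooth in $(z,p)$. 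Finally, to control the sum over $m\in\Z^d$, one integrates by parts in $\theta$ against the factor $e^{-i\langle m,\theta\rangle}$, producing decay $(1+|m|)^{-N}$ for every $N$ and hence absolute convergence to a smooth kernel.

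The main technical obstacle I anticipate is the uniform-in-$m$ execution of these integration-by-parts arguments, combined with the treatment of the complex-valued phase $\varphi$ (which satisfies only $\Im\varphi\geq0$); the standard complex stationary phase framework of Melin--Sj\"ostrand, with an almost-analytic extension of $\varphi$, is the appropriate tool. However, since the amplitude vanishes exactly on the critical set, no delicate leading-order cancellation is needed, and the smoothness of the kernel of $\tilde S\hat R_{\tau_\lambda}$ on $D_0\times D_0$---which is precisely the statement $\tilde S\hat R_{\tau_\lambda}\equiv0$ on $D_0$---follows from the standard mechanism once uniformity in $m$ and $\theta$ is established.
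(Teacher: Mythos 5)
Your starting point is the same as the paper's: substitute \eqref{e-gue201116yydw} and \eqref{e-gue201116yydI} into the composition and exploit that the cutoff $1-\chi(y_{2n+1})$ kills a neighbourhood of the critical set of the combined phase. But as written the argument has two genuine gaps. First, the only non\-stationary direction you actually use is $\eta_{2n+1}$, whose phase derivative $x_{2n+1}-y_{2n+1}$ is bounded below but carries no power of $t$; integrating by parts there compactifies the $\eta_{2n+1}$\--support (turning $\tau_\lambda$ into $\tau_\lambda'$) and gives decay in $y_{2n+1}$, but it does nothing to control the divergent integral $\int_0^\infty e^{i\varphi t}s(\cdot,\cdot,t)\,dt$ with $s\in S^n_{\rm cl}$ near the diagonal, where $\varphi$ is small and integration by parts in $t$ is unavailable. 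The device the paper uses, and which your proposal is missing, is to shrink $D_0$ so that $|\partial_{y_{2n+1}}\varphi|\geq C$ as in \eqref{e-gue201117yydI} (possible because $d_y\varphi|_{x=y}=-\omega_0$ and $\omega_0$ has a nonzero $dx_{2n+1}$-component) and then integrate by parts in the \emph{intermediate} variable $u_{2n+1}$, where the phase derivative is $\geq ct$ for $t\gg|\lambda|$ on $\supp\tau_\lambda'$; this is what produces arbitrary decay in $t$ and hence a well-defined smooth amplitude $A(x,u',y_{2n+1})$.

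Second, and more seriously, your treatment of the sum over $m\in\Z^d$ by integration by parts in $\theta$ against $e^{-i\langle m,\theta\rangle}$ necessarily differentiates the input function $u((e^{i\theta_1},\ldots,e^{i\theta_d})\circ x')$, so the decay $(1+|m|)^{-N}$ is purchased with $N$ derivatives of the test function. At best this shows $\tilde S\hat R_{\tau_\lambda}:\cC^\infty_c(D_0)\to\cC^\infty(D_0)$ is continuous, which does \emph{not} imply the kernel is smooth (the identity operator has this property); the assertion $\tilde S\hat R_{\tau_\lambda}\equiv0$ on $D_0$ requires continuity $\mathscr E'(D_0)\to\cC^\infty(D_0)$. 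The paper avoids losing derivatives of the input in two steps: the $m$-sum is controlled by Parseval on $\mathbb T^d$ (orthogonality of the Fourier modes, as in \eqref{e-gue201117yydb}), giving $L^2_c(D_0)\to H^s_{\rm loc}(D_0)$ for every $s$; then the regularity in the incoming variable is upgraded by writing $g=(\triangle_X^sG_s+F_s)g$ with $G_s$ a parametrix of the $\mathbb T^d$-invariant Laplacian $\triangle_X^s$ and commuting $\triangle_X^s$ through the torus average, yielding $H^{-s}_{\rm comp}(D_0)\to H^s_{\rm loc}(D_0)$ for all $s$. Without some substitute for these two devices your proof does not close.
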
 

\begin{proof}
From \eqref{e-gue201115yydw}, we may assume $D_0$ is small so that 
\begin{equation}\label{e-gue201117yydI}
\mbox{$|\partial_{y_{2n+1}}\varphi(x,y)|\geq C$, for every $(x,y)\in D_0$},
\end{equation}
where $C>0$ is a constant. Let $g\in\cC^\infty_c(D_0)$. From \eqref{e-gue201116yydI} and \eqref{e-gue201116yydw}, we have 
\begin{equation}\label{e-gue201117yyd}
\begin{split}
&(\tilde S\hat R_{\tau_\lambda}g)(x)\\
&=\frac{1}{2\pi}\sum_{(m_1,\ldots,m_d)\in\mathbb Z^d}\:
\int e^{it\varphi(x,u)}a(x,u,t)e^{i\langle u_{2n+1}-y_{2n+1},
\eta_{2n+1}\rangle+i(\sum^d_{j=1}m_j\beta_j)y_{2n+1}-i\sum^d_{j=1}m_j\theta_j}\\
&\times\tau_\lambda(-\eta_{2n+1}) (1-\chi(y_{2n+1}))g((e^{i\theta_1},\ldots,e^{i\theta_d})\circ u')d\mathbb T_dd\eta_{2n+1}dy_{2n+1}dv_X(u)\ \ \mbox{on $D_0$},
\end{split}
\end{equation}
where we also write $u=(u',u_{2n+1})$, $u'=(u_1,\ldots,u_{2n})$, to denote the BRT coordinates $x$. Since $u_{2n+1}\neq y_{2n+1}$, for every $(u',u_{2n+1})\in D_0$, 
$y_{2n+1}\in{\rm Supp\,}(1-\chi(y_{2n+1}))$, we can integrate by parts in $\eta_{2n+1}$ and rewrite \eqref{e-gue201117yyd}:
\begin{equation}\label{e-gue201117yydII}
\begin{split}
&(\tilde S\hat R_{\tau_\lambda}g)(x)\\
&=\frac{1}{2\pi}\sum_{(m_1,\ldots,m_d)\in\mathbb Z^d}\:
\int e^{it\varphi(x,u)}a(x,u,t)\frac{1}{i(u_{2n+1}-y_{2n+1})}\\
&\times e^{i\langle u_{2n+1}-y_{2n+1},\eta_{2n+1}\rangle+i(\sum^d_{j=1}m_j\beta_j)y_{2n+1}-i\sum^d_{j=1}m_j\theta_j}\\
&\times\tau'_\lambda(-\eta_{2n+1}) (1-\chi(y_{2n+1}))g((e^{i\theta_1},\ldots,e^{i\theta_d})\circ u')d\mathbb T_dd\eta_{2n+1}dy_{2n+1}dv_X(u)\ \ \mbox{on $D_0$}.
\end{split}
\end{equation}  
Let 
\[\begin{split}
&A(x,u',y_{2n+1})\\
&:=\int e^{it\varphi(x,u)+i\langle u_{2n+1}-y_{2n+1},\eta_{2n+1}\rangle}a(x,u,t)(1-\chi(y_{2n+1}))\frac{1}{i(u_{2n+1}-y_{2n+1})}\\
&\quad\quad\times\tau'_\lambda(-\eta_{2n+1})d\eta_{2n+1} du_{2n+1}dt.\end{split}\]
From \eqref{e-gue201117yydI}, we see that 
\[|\partial_{u_{2n+1}}(it\varphi(x,u)+i\langle u_{2n+1}-y_{2n+1},\eta_{2n+1}\rangle)|\geq ct,\ \ \mbox{for $t\gg|\lambda|$, $\eta_{2n+1}\in{\rm Supp\,}\tau'_\lambda(-\eta_{2n+1})$}\]
for every $(x,u)\in D_0\times D_0$, where $c>0$ is a constant. From this observation, we can integrate by parts in $u_{2n+1}$ and $\eta_{2n+1}$ and deduce that 
\begin{equation}\label{e-gue201117yyda}
\mbox{$A(x,u',y_{2n+1})\in\cC^\infty(D_0\times D_0\times\mathbb R)$ and $A(x,u',y_{2n+1})$ is a Schwartz function in $y_{2n+1}$}. 
\end{equation}
We have 
\begin{equation}\label{e-gue201118yyd}
\begin{split}
&(\tilde S\hat R_{\tau_\lambda}g)(x)\\
&=\frac{1}{2\pi}\sum_{(m_1,\ldots,m_d)\in\mathbb Z^d}\int A(x,u',y_{2n+1})
e^{i(\sum^d_{j=1}m_j\beta_j)y_{2n+1}-im_1\theta_1-\ldots-im_d\theta_d}\\
&\times g((e^{i\theta_1},\ldots,e^{i\theta_d})\circ u')dv(u')dy_{2n+1}d\mathbb T_d,
\end{split}
\end{equation}
where $dv_X(u)=dv(u')du_{2n+1}$. From \eqref{e-gue201117yyda} and \eqref{e-gue201118yyd}, we have 
\begin{equation}\label{e-gue201117yydb}
\begin{split}
&\|\tilde S\hat R_{\tau_\lambda}g\|_{D_0,s}\\
&\leq C\sum_{(m_1,\ldots,m_d)\in\mathbb Z^d}\int|\int_{\mathbb T^d}e^{-im_1\theta_1-\ldots-im_d\theta_d}g((e^{i\theta_1},\ldots,e^{i\theta_d}\circ u')|^2\chi(u_{2n+1})dv_X(u)\\
&= C\sum_{(m_1,\ldots,m_d)\in\mathbb Z^d}\int|\int_{\mathbb T^d}e^{-im_1\theta_1-\ldots-im_d\theta_d}g((e^{i\theta_1},\ldots,e^{i\theta_d}\circ u)|^2\chi(u_{2n+1})dv_X(u)\\
&\leq\hat C\sum_{(m_1,\ldots,m_d)\in\mathbb Z^d}\int|\int_{\mathbb T^d}e^{-im_1\theta_1-\ldots-im_d\theta_d}g((e^{i\theta_1},\ldots,e^{i\theta_d}\circ u)|^2dv_X(u)\\
&\leq\hat C_0\|g\|^2,  
\end{split}  
\end{equation}
where $\|\cdot\|_{D_0,s}$ denotes the standard Sobolev norm of order $s$ on $D_0$, $C, \hat C, \hat C_0>0$ are constants. From \eqref{e-gue201117yydb}, we deduce that 
\[\mbox{$\tilde S\hat R_{\tau_\lambda}: L^2_c(D_0)\rightarrow H^s_{{\rm loc\,}}(D_0)$ is continuous, for every $s\in\mathbb N$}.\]

Let $\triangle_X: \cC^\infty(X)\rightarrow\cC^\infty(X)$ be the standard Laplacian on $X$ induced by $\langle\,\cdot\,|\,\cdot\,\rangle$. Since 
$\langle\,\cdot\,|\,\cdot\,\rangle$ is $\mathbb T^d$ invariant, $\triangle_X$ is $\mathbb T^d$ invariant. Fix $s\in\mathbb N$. Let 
\[G_s: \cC^\infty_c(D_0)\rightarrow\cC^\infty_c(D_0)\]
be a parametrix of $\triangle_X^s$ on $D_0$ and $G_s$ is properly supported on $D_0$. Let $g\in\cC^\infty_c(D_0)$. We have 
\begin{equation}\label{e-gue201118yyda}
g=(\triangle^s_XG_s+F_s)g,\ \ F_s\equiv 0\ \ \mbox{on $D_0$}. 
\end{equation}
Now, on $D_0$, 
\begin{equation}\label{e-gue201118yydb}
\tilde S\hat R_{\tau_\lambda}g=\tilde S\hat R_{\tau_\lambda}(\triangle^s_XG_sg)+\tilde S\hat R_{\tau_\lambda}(F_sg).
\end{equation}
Since $F_s$ is smoothing, we have 
\begin{equation}\label{e-gue201118yydc}
\|\tilde S\hat R_{\tau_\lambda}(F_sg)\|_{D_0,s}\leq C\|g\|_{-s},
\end{equation}
where $C>0$ is a constant. Now, we can integrate by parts and repeat the proof of \eqref{e-gue201118yyd} and show that 
\begin{equation}\label{e-gue201118yydd}
\begin{split}
&(\tilde S\hat R_{\tau_\lambda}\triangle^s_XG_sg)(x)\\
&=\frac{1}{2\pi}\sum_{(m_1,\ldots,m_d)\in\mathbb Z^d}\int A_s(x,u',y_{2n+1})
e^{i(\sum^d_{j=1}m_j\beta_j)y_{2n+1}-im_1\theta_1-\ldots-im_d\theta_d}\\
&\times (G_sg)((e^{i\theta_1},\ldots,e^{i\theta_d})\circ u')dv(u')dy_{2n+1}d\mathbb T_d,
\end{split}
\end{equation}
where 
\begin{equation}\label{e-gue201118yyde}
\mbox{$A_s(x,u',y_{2n+1})\in\cC^\infty(D_0\times D_0\times\mathbb R)$ and $A(x,u',y_{2n+1})$ is a Schwartz function in $y_{2n+1}$}. 
\end{equation}
From \eqref{e-gue201118yydd} and \eqref{e-gue201118yyde}, we can repeat the proof of \eqref{e-gue201117yydb} and conclude that
\begin{equation}\label{e-gue201118yydf}
\|\tilde S\hat R_{\tau_\lambda}(\triangle^s_XG_sg)\|_{D_0,s}\leq C\| G_sg\|\leq C_1\|g\|_{-s},
\end{equation}
where $C, C_1>0$ are constants. From \eqref{e-gue201118yyda}, \eqref{e-gue201118yydb}, \eqref{e-gue201118yydc} and \eqref{e-gue201118yydf}, we get that 
\[\mbox{$\tilde S\hat R_{\tau_\lambda}: H^{-s}_{{\rm comp\,}}(D_0)\rightarrow H^s_{{\rm loc\,}}(D_0)$ is continuous, for every $s\in\mathbb N$}.\]
Hence, $\tilde S\hat R_{\tau_\lambda}$ is smoothing on $D_0$. 
\end{proof}

Now, we can prove: 

\begin{thm}\label{t-gue201118yyd}
Suppose that the $\mathbb R$-action is not free. 
Assume that $2\sqrt{-1}\cL$ is complete and there is $C>0$ such that 
\[\sqrt{-1}R^{K^*_X}_{\cL}\geq-2C\sqrt{-1}\cL,\ \ 
{(2\sqrt{-1}\cL)^n\wedge\omega_0\geq C\Theta_X^n\wedge\omega_0.}\]
Let $D=U\times I\Subset X$ be a BRT chart with BRT coordinates 
$x=(x_1,\ldots,x_{2n+1})$. 
Let $\lambda\in\mathbb R$, $\lambda\leq-2C$. Then, 
\begin{equation}\label{e-gue201118yydk}
(Q_{\tau^2_\lambda}S^{(0)})(x,y)\equiv\int^\infty_0e^{i\varphi(x,y)t}s(x,y,t)dt\ \ 
\mbox{on $D$},
\end{equation}
where $\varphi\in\cC^\infty(D\times D)$ is as in 
\eqref{e-gue201115yydt}, 
$s(x, y, t)\in S^{n}_{{\rm cl\,}}\big(D\times D\times\mathbb{R}_+\big)$ 
is as in \eqref{e-gue201115ycdk}. 
\end{thm}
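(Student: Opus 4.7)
The plan is to follow the strategy of Theorem \ref{t-gue201115ycda} for the free case, but account at every step for the fact that in the non-free case $Q_{\tau_\lambda}$ is no longer a pseudodifferential operator, using instead the decomposition supplied by Lemmas \ref{l-gue201116yyd} and \ref{l-gue201116yyds}. Since $\tau_\lambda$ is real valued, the operator $Q_{\tau_\lambda}$ is $L^2$-self-adjoint (by Lemma \ref{l-gue201108yyd} and the spectral theorem); since it commutes with $S^{(0)}$ by \eqref{e-gue201115yyd} and satisfies $Q_{\tau_\lambda}^{2}=Q_{\tau^{2}_\lambda}$ via the functional calculus, one has
\begin{equation*}
Q_{\tau^{2}_\lambda}S^{(0)}=Q^*_{\tau_\lambda}S^{(0)}Q_{\tau_\lambda}.
\end{equation*}
Theorem \ref{t-gue201115yyd} provides local $L^2$ closed range of $\Box^{(0)}_b$ on $D$ with respect to $Q_{\tau_\lambda}$, so Theorem \ref{t-gue201116yydb} applies and yields
\begin{equation*}
Q_{\tau^{2}_\lambda}S^{(0)}\equiv \tilde S^{*}Q^{*}_{\tau_\lambda}Q_{\tau_\lambda}\tilde S=\tilde S^{*}Q_{\tau^{2}_\lambda}\tilde S\quad\text{on }D,
\end{equation*}
where $\tilde S$ is the microlocal parametrix furnished by Theorem \ref{t-gue201116yyda}.

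Next I would fix an arbitrary $D_0\Subset D$ and localize $Q_{\tau^{2}_\lambda}$ there. The proofs of Lemmas \ref{l-gue201116yyd} and \ref{l-gue201116yyds} use only the Fourier inversion formula, the spectral expansion \eqref{e-gue201112yydI}, and repeated integration by parts in $\eta_{2n+1}$ and $u_{2n+1}$, none of which relies on anything specific to $\tau_\lambda$ beyond its being a bounded smooth function on $\mathbb R$. Running the same arguments with $\tau_\lambda$ replaced throughout by $\tau^{2}_\lambda$ yields a splitting $Q_{\tau^{2}_\lambda}u=\hat Q_{\tau^{2}_\lambda}u+\hat R_{\tau^{2}_\lambda}u$ on $\cC^\infty_c(D_0)$ with $\hat Q_{\tau^{2}_\lambda}\in L^{0}_{1,0}(D)$ the $\Psi$DO of full symbol $\tau^{2}_\lambda(-\eta_{2n+1})$, together with $\tilde S\,\hat R_{\tau^{2}_\lambda}\equiv 0$ on $D_0$. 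Because $\tilde S^{*}\equiv \tilde S$ on $D$ by \eqref{e-gue201116yydb}, this yields $\tilde S^{*}\hat R_{\tau^{2}_\lambda}\equiv 0$ on $D_0$, hence
\begin{equation*}
\tilde S^{*}Q_{\tau^{2}_\lambda}\tilde S\equiv \tilde S^{*}\hat Q_{\tau^{2}_\lambda}\tilde S\quad\text{on }D_0.
\end{equation*}

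For the final step I would exploit the microlocal structure of $\tilde S$. Since $\lambda\leq -2C<0$, the symbol $\tau^{2}_\lambda(-\eta_{2n+1})$ equals $1$ on a conic neighborhood of $\Sigma^{-}\cap T^{*}D$ for large $|\eta|$, so $\hat Q_{\tau^{2}_\lambda}-I\equiv 0$ at $\Sigma^{-}\cap T^{*}D$ in the sense of Definition \ref{d-gue201115yyds}. The complex FIO $\tilde S$ represented by \eqref{e-gue201116yydw} has canonical relation concentrated on the diagonal of $\Sigma^{-}\times\Sigma^{-}$, so standard symbolic composition gives $(\hat Q_{\tau^{2}_\lambda}-I)\tilde S\equiv 0$ on $D_0$, and combining this with the idempotency $\tilde S^{*}\tilde S\equiv \tilde S^{2}\equiv \tilde S$ from \eqref{e-gue201116yydb} delivers $\tilde S^{*}\hat Q_{\tau^{2}_\lambda}\tilde S\equiv \tilde S$ on $D_0$. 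Since $D_0\Subset D$ was arbitrary, $Q_{\tau^{2}_\lambda}S^{(0)}\equiv \tilde S$ on $D$, and \eqref{e-gue201118yydk} follows from \eqref{e-gue201116yydw}. The main obstacle is conceptual rather than computational: one must see that the spectral operator $Q_{\tau_\lambda}$, although not a $\Psi$DO, decomposes into a $\Psi$DO part $\hat Q_{\tau^{2}_\lambda}$ carrying all the microlocal information near $\Sigma^{-}$, plus a non-local remainder $\hat R_{\tau^{2}_\lambda}$ built from the torus-orbit averaging which is annihilated upon composition with $\tilde S$ thanks to the cutoff $(1-\chi(y_{2n+1}))$ and the non-stationarity of the phase $\varphi$ in $y_{2n+1}$; carrying through the integration by parts in Lemma \ref{l-gue201116yyds} with $\tau^{2}_\lambda$ in place of $\tau_\lambda$ requires only that one track which factor ($\tau_\lambda$ or $\tau'_\lambda$) appears at each stage so that the conic support conditions forcing the microlocal vanishing are preserved.
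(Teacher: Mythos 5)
Your proposal is correct and follows essentially the same route as the paper: reduce to $\tilde S^*Q_{\tau_\lambda}^*Q_{\tau_\lambda}\tilde S$ via Theorem~\ref{t-gue201116yydb} together with the local closed range from Theorem~\ref{t-gue201115yyd}, strip off the non-pseudodifferential remainder using Lemmas~\ref{l-gue201116yyd} and~\ref{l-gue201116yyds}, and identify the remaining composition with $\tilde S$. The only divergence is in the last step: where the paper keeps the product $\hat Q_{\tau_\lambda}^*\hat Q_{\tau_\lambda}$ and defers to the stationary-phase computation of \cite[Theorem 5.8]{HM16}, you insert the single symbol $\tau_\lambda^2$ and conclude via $(I-\hat Q_{\tau^2_\lambda})\tilde S\equiv 0$ (the wavefront argument the paper itself uses in Theorem~\ref{t-gue201118yyda}) together with $\tilde S^*\tilde S\equiv\tilde S$, which yields $Q_{\tau^2_\lambda}S^{(0)}\equiv\tilde S$ and hence the stated expansion and leading term.
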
 

\begin{proof}
From \eqref{e-gue201116yydI}, \eqref{e-gue201116yyde} 
and Lemma~\ref{l-gue201116yyds}, we see that on $D$, 
\[Q_{\tau^2_\lambda}S^{(0)}\equiv
\tilde S^*\hat Q_{\tau\lambda}^*\hat Q_{\tau\lambda}\tilde S.\]
From this observation, we can repeat the proof 
of~\cite[Theorem 5.8]{HM16} and get the theorem. 
\end{proof}

\begin{thm}\label{t-gue201118yyda}
Let us consider an arbitrary $\mathbb R$-action. 
Assume that $2\sqrt{-1}\cL$ is complete and there is $C>0$ such that 
\[\sqrt{-1}R^{K^*_X}_{\cL}\geq-2C\sqrt{-1}\cL,\ \ 
{(2\sqrt{-1}\cL)^n\wedge\omega_0\geq C\Theta_X^n\wedge\omega_0.}\]
Let $D=U\times I$ be a BRT chart with BRT coordinates $x=(x_1,\ldots,x_{2n+1})$. 

Let $\lambda\in\mathbb R$, $\lambda\leq-2C$. Then, 
\begin{equation}\label{e-gue201118ycda}
(I-Q_{\tau^2_\lambda})^2S^{(0)}\equiv0\ \ \mbox{on $D$}.
\end{equation}
\end{thm}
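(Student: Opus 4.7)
The plan is to reduce the statement to a microlocal estimate on the approximate Szegő kernel $\tilde S$ from Theorem~\ref{t-gue201116yyda}, and then to exploit the decomposition $Q_{\tau^2_\lambda}=\hat Q_{\tau^2_\lambda}+\hat R_{\tau^2_\lambda}$ of Lemma~\ref{l-gue201116yyd} together with Lemma~\ref{l-gue201116yyds}.

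First I would establish the reduction. From Theorem~\ref{t-gue201116yyda}, $\Box^{(0)}_b A+\tilde S=I$ on $D$, and since $\ddbar_b^*\ddbar_b u$ is orthogonal to $\Ker\ddbar_b$, one has $S^{(0)}\Box^{(0)}_b=0$; applying $S^{(0)}$ on the left of $\Box^{(0)}_b A+\tilde S=I$ yields $S^{(0)}\equiv S^{(0)}\tilde S$ on $D$. Using $Q_{\tau^2_\lambda}S^{(0)}=S^{(0)}Q_{\tau^2_\lambda}$ from \eqref{e-gue201115yyd}, together with the fact that $S^{(0)}$ preserves smoothness (elements of $\Ker\Box^{(0)}_b$ are smooth by hypoellipticity for strictly pseudoconvex $\Box^{(0)}_b$) and $\tilde S$ is properly supported, I would reduce to proving
\[ (I-Q_{\tau^2_\lambda})\,\tilde S\equiv 0 \quad\text{on } D,\]
which of course implies the squared statement.

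For this reduction, write $(I-Q_{\tau^2_\lambda})\tilde S=(I-\hat Q_{\tau^2_\lambda})\tilde S-\hat R_{\tau^2_\lambda}\tilde S$ on $D_0\Subset D$ (with $\hat R_{\tau^2_\lambda}=0$ in the free case by Lemma~\ref{l-gue201110yyd}). The first term is handled by a microlocal argument: the symbol $1-\tau^2_\lambda(-\eta_{2n+1})$ of $I-\hat Q_{\tau^2_\lambda}$ vanishes for $\eta_{2n+1}\geq 2|\lambda|$, whereas the canonical relation of $\tilde S$, determined by the phase $\varphi$ with $d_x\varphi|_{x=y}=\omega_0$ and $t>0$, lies on $\{(x,t\omega_0(x);x,-t\omega_0(x)):t>0\}\subset\Sigma^-$, which in BRT coordinates sits in $\{\eta_{2n+1}>0\}$ and tends to infinity; hence $(I-\hat Q_{\tau^2_\lambda})\tilde S\equiv 0$ on $D$. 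For the second term, I would observe that the Schwartz kernel of $\hat Q_{\tau^2_\lambda}$ is of the form $K(x-y)$ with $\overline{K(y-x)}=K(x-y)$ (since $\tau^2_\lambda$ is real), so $\hat Q_{\tau^2_\lambda}$ is self-adjoint, and therefore $\hat R_{\tau^2_\lambda}=Q_{\tau^2_\lambda}-\hat Q_{\tau^2_\lambda}$ is self-adjoint as well. Adapting Lemma~\ref{l-gue201116yyds} with $\tau^2_\lambda$ in place of $\tau_\lambda$ (the integration-by-parts argument only uses that the cutoff is smooth and has the same structural form) gives $\tilde S\,\hat R_{\tau^2_\lambda}\equiv 0$ on $D_0$; taking adjoints of kernels and using $\tilde S^*\equiv\tilde S$ on $D$ from \eqref{e-gue201116yydb} then yields $\hat R_{\tau^2_\lambda}\,\tilde S\equiv 0$ on $D_0$. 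Combining the two gives $(I-Q_{\tau^2_\lambda})\tilde S\equiv 0$ on $D$, and a further application of $(I-Q_{\tau^2_\lambda})$ produces the desired $(I-Q_{\tau^2_\lambda})^2\tilde S\equiv 0$ on $D$.

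The main obstacle is bookkeeping of the notion $\equiv 0$ under compositions: one must verify that composing $S^{(0)}$ and $(I-Q_{\tau^2_\lambda})$ with a kernel that is smooth and properly supported on $D\times D$ again yields a kernel smooth on $D\times D$. This uses the pseudolocal/hypoelliptic properties of $S^{(0)}$ and the explicit Fourier multiplier structure of $Q_{\tau^2_\lambda}$, together with the proper support of $\tilde S$. A secondary subtlety is checking that the proof of Lemma~\ref{l-gue201116yyds} transfers to $\tau^2_\lambda$, which it does unchanged because the integration-by-parts uses only the support separation provided by $1-\chi(y_{2n+1})$ and the derivatives of $\tau_\lambda$ in $\eta_{2n+1}$.
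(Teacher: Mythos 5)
Your assembly of the argument has a genuine gap at the reduction step. You reduce \eqref{e-gue201118ycda} to proving $(I-Q_{\tau^2_\lambda})\tilde S\equiv 0$ on $D$, and to get from there back to $(I-Q_{\tau^2_\lambda})^2S^{(0)}\equiv0$ you invoke ``the fact that $S^{(0)}$ preserves smoothness (elements of $\Ker\Box^{(0)}_b$ are smooth by hypoellipticity).'' This is false: $\Box^{(0)}_b$ is not hypoelliptic (its characteristic variety $\Sigma$ is nonempty), and $\Ker\ddbar_b\cap L^2$ contains non-smooth elements already on the sphere or the Heisenberg group (general Hardy-space functions). In the present non-compact setting, with no closed-range hypothesis, there is no a priori regularity for $S^{(0)}$ whatsoever; it may only be used as an $L^2$-bounded operator. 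Consequently the composition $S^{(0)}\circ(\text{smoothing on }D)$ is merely $\mathscr E'(D)\rightarrow L^2(X)$, not smoothing, and your factorization $(I-Q_{\tau^2_\lambda})^2S^{(0)}=(I-Q_{\tau^2_\lambda})S^{(0)}\,(I-Q_{\tau^2_\lambda})\tilde S\cdots$ leaves a non-regularizing operator acting \emph{last}, so no smoothness of the output in $x$ can be concluded. A further warning sign: if your reduction were valid, the same computation with a single factor would give $(I-Q_{\tau^2_\lambda})S^{(0)}\equiv0$, which is precisely the statement the theorem deliberately does \emph{not} claim --- the square is there because $S^{(0)}$ cannot be moved past a smoothing factor.

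The paper's proof arranges the factors the other way. Since $Q_{\tau^2_\lambda}$ commutes with $\ddbar_b$, $\ddbar_b^*$, the element $(I-Q_{\tau^2_\lambda})S^{(0)}u$ lies in $\Ker\Box^{(0)}_b$ exactly, so the parametrix identity $A^*\Box^{(0)}_b+\tilde S=I$ gives $(I-Q_{\tau^2_\lambda})S^{(0)}=\tilde S(I-Q_{\tau^2_\lambda})S^{(0)}$ with $\tilde S$ on the \emph{left} (this also avoids your domain issue of whether $Au\in\Dom\Box^{(0)}_b$). Then $\tilde S(I-Q_{\tau^2_\lambda})\equiv\tilde S(I-\hat Q_{\tau^2_\lambda})\equiv0$ by Lemma~\ref{l-gue201116yyds} and the wave front argument, whence $(I-Q_{\tau^2_\lambda})S^{(0)}:L^2(X)\rightarrow\cC^\infty(D)$ is continuous; by duality $S^{(0)}(I-Q_{\tau^2_\lambda}):\mathscr E'(D)\rightarrow L^2(X)$ is continuous, and the sandwich $(I-Q_{\tau^2_\lambda})^2S^{(0)}=\bigl[(I-Q_{\tau^2_\lambda})S^{(0)}\bigr]\circ\bigl[S^{(0)}(I-Q_{\tau^2_\lambda})\bigr]$ maps $\mathscr E'(D)\rightarrow\cC^\infty(D)$. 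Your microlocal ingredients --- the vanishing of the symbol $1-\tau^2_\lambda(-\eta_{2n+1})$ near $\Sigma^-$, and the adjoint trick relating $\hat R_{\tau^2_\lambda}\tilde S$ to $\tilde S\hat R_{\tau^2_\lambda}$ via $\tilde S\equiv\tilde S^*$ --- are essentially the adjoints of the steps the paper uses and are sound in spirit (modulo checking that $Q_{\tau^2_\lambda}$ maps $\cC^\infty_c(D)$ continuously into $\cC^\infty(D)$, needed when you write $\hat R_{\tau^2_\lambda}\tilde S=\hat R_{\tau^2_\lambda}\tilde S^*+\hat R_{\tau^2_\lambda}(\tilde S-\tilde S^*)$), but they must be composed on the side of $S^{(0)}$ where the parametrix identity actually places them.
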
 

\begin{proof}
From \eqref{e-gue201116yydb}, we have 
\begin{equation}\label{e-gue201118ycdh}
(I-Q_{\tau^2_\lambda})S^{(0)}=\tilde S(I-Q_{\tau^2_\lambda})S^{(0)}.
\end{equation}
From Lemma~\ref{l-gue201116yyds}, we have 
\[\tilde S(I-Q_{\tau^2_\lambda})\equiv\tilde S(I-\hat Q_{\tau^2_\lambda})\ \ \mbox{on $D_0$}. \]
Since ${\rm WF\,}(I-\hat Q_{\tau^2_\lambda})\bigcap\Sigma^-=\emptyset$ and ${\rm WF'\,}(S)={\rm diag\,}(\Sigma^-\times\Sigma^-)$, we have 
\begin{equation}\label{e-gue201118ycdj}
\tilde S(I-\hat Q_{\tau^2_\lambda})\equiv 0\ \ \mbox{on $D_0$},
\end{equation}
where ${\rm WF\,}(I-\hat Q_{\tau^2_\lambda})$ denotes the wave front set of $I-\hat Q_{\tau^2_\lambda}$ and 
\[ {\rm WF'\,}(\tilde S)=\{(x,\xi,y,\eta)\in T^*D\times T^*D;\, (x,\xi,y,-\eta)\in{\rm WF'\,}(\tilde S).\]
From \eqref{e-gue201118ycdh} and \eqref{e-gue201118ycdj}, w get 
\begin{equation}\label{e-gue201118yydu}
\mbox{$(I-Q_{\tau^2_\lambda})S^{(0)}: L^2(X)\rightarrow\cC^\infty(D)$ is continuous}
\end{equation}
and hence 
\begin{equation}\label{e-gue201118yydv}
\mbox{$S^{(0)}(I-Q_{\tau^2_\lambda}): \mathscr E'(D)\rightarrow L^2(X)$ is continuous}.
\end{equation}
From \eqref{e-gue201118yydu} and \eqref{e-gue201118yydv}, we get 
\[\mbox{$(I-Q_{\tau^2_\lambda})S^{(0)}(I-Q_{\tau^2_\lambda}):\mathscr E'(D)\rightarrow \cC^\infty(D)$ is continuous}.\]
The theorem follows. 
\end{proof}
        
We can now prove the main result of this work. 

\begin{thm}[=Theorem \ref{thm-main}]\label{t-gue201118yydp}
The $\mathbb R$-action can be free or not free. 
Assume that $2\sqrt{-1}\cL$ is complete and there is $C>0$ such that 
\[\sqrt{-1}R^{K^*_X}_{\cL}\geq-2C\sqrt{-1}\cL,\ \ 
{(2\sqrt{-1}\cL)^n\wedge\omega_0\geq  C\Theta_X^n\wedge\omega_0.}\]  
Let $D\Subset X$ be a local coordinate patch with local coordinates $x=(x_1,\ldots,x_{2n+1})$. 
Then, 
\begin{equation}\label{e-gue201118yydq}
S^{(0)}(x,y)\equiv\int^\infty_0e^{i\varphi(x,y)t}s(x,y,t)dt\ \ \mbox{on $D$},
\end{equation}
where $\varphi\in\cC^\infty(D\times D)$, 
\begin{equation}\label{e-gue201118yydz}
\begin{split}
&\varphi\in \cC^\infty(D\times D),\ \ {\rm Im\,}\varphi(x, y)\geq0,\\
&\varphi(x, x)=0,\ \ \varphi(x, y)\neq0\ \ \mbox{if}\ \ x\neq y,\\
& d_x\varphi(x, y)\big|_{x=y}=\omega_0(x), \ \ d_y\varphi(x, y)\big|_{x=y}=-\omega_0(x), \\
&\varphi(x, y)=-\ol\varphi(y, x),
\end{split}
\end{equation}
$s(x, y, t)\in S^{n}_{{\rm cl\,}}\big(D\times D\times\mathbb{R}_+\big)$
and the leading term $s_0(x,y)$ of the expansion \eqref{e-gue201114yydII} 
of $s(x,y,t)$ satisfies
\begin{equation}  \label{e-gue201118yydr}
s_0(x, x)=\frac{1}{2}\pi^{-n-1}
\abs{{\rm det\,}\mathcal{L}_x},\ \ \mbox{for all $x\in D$}.
\end{equation} 
 \end{thm}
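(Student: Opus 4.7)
The plan is to localize to a BRT chart, decompose the Szeg\H{o} projection spectrally using the cutoff $Q_{\tau_\lambda^2}$, and invoke the two key ingredients already established: the explicit oscillatory-integral representation of $Q_{\tau_\lambda^2}S^{(0)}$ on BRT charts, and the smoothing properties of the residual $(I-Q_{\tau_\lambda^2})S^{(0)}$.

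First, since the claim is local on $D$ and the FIO form $\int_0^\infty e^{i\varphi t}s\,dt$ is invariant under smooth changes of coordinates, it suffices to prove the expansion on a neighbourhood $D_p\Subset \widetilde D$ of each point $p\in D$, where $\widetilde D=U\times I$ is a BRT chart around $p$. Fix any $\lambda\leq -2C$. By Proposition~\ref{p-gue201112yyd} together with \eqref{e-gue201115yyd}, the operators $Q_{\tau_\lambda^2}$ and $S^{(0)}$ commute, so on $D_p$ we write
\[
S^{(0)} \;=\; Q_{\tau_\lambda^2}S^{(0)} \;+\; (I-Q_{\tau_\lambda^2})S^{(0)}.
\]
The first summand is handled by Theorem~\ref{t-gue201115ycda} when the $\R$-action is free, and by Theorem~\ref{t-gue201118yyd} when it comes from a torus action---the only remaining alternative, by \eqref{e-gue201109yyd}. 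Both give
\[
(Q_{\tau_\lambda^2}S^{(0)})(x,y) \equiv \int_0^\infty e^{i\varphi(x,y)t}s(x,y,t)\,dt \ \ \text{on $D_p$},
\]
with phase and symbol enjoying properties \eqref{e-gue201118yydz} and leading term \eqref{e-gue201118yydr}.

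It remains to show that the residual $(I-Q_{\tau_\lambda^2})S^{(0)}$ is smoothing on $D_p$. Theorem~\ref{t-gue201118yyda} furnishes the quadratic statement $(I-Q_{\tau_\lambda^2})^2 S^{(0)}\equiv 0$ on $D_p$, whose proof in fact exhibits the continuities $(I-Q_{\tau_\lambda^2})S^{(0)}\colon L^2(X)\to\cC^\infty(D_p)$ and $S^{(0)}(I-Q_{\tau_\lambda^2})\colon\mathscr E'(D_p)\to L^2(X)$, so that the sandwich $(I-Q_{\tau_\lambda^2})S^{(0)}(I-Q_{\tau_\lambda^2})$ is smoothing. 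To promote this to the linear smoothing we need, I would use the identity $S^{(0)}\equiv \tilde S\,S^{(0)}\,\tilde S$ on $D_p$, derived from $\square_b^{(0)}A+\tilde S=I$ in Theorem~\ref{t-gue201116yyda} (since $\square_b^{(0)}S^{(0)}=0$ and $\tilde S\equiv\tilde S^*$), combined with the microlocal fact that $\mathrm{WF}(\tilde S)\subset \Sigma^-$ while the principal symbol $\tau_\lambda^2(-\eta_{2n+1})$ of $Q_{\tau_\lambda^2}$ tends to $1$ as $\eta$ escapes along the cone $\Sigma^-$. Thus $(I-Q_{\tau_\lambda^2})\tilde S\equiv 0$ and $\tilde S(I-Q_{\tau_\lambda^2})\equiv 0$ on $D_p$, whence
\[
(I-Q_{\tau_\lambda^2})S^{(0)} \;\equiv\; (I-Q_{\tau_\lambda^2})\,\tilde S\,S^{(0)}\,\tilde S \;\equiv\; 0 \ \ \text{on $D_p$}.
\]

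The main obstacle is precisely this residual argument: converting the quadratic smoothing supplied by Theorem~\ref{t-gue201118yyda} into the linear smoothing, which requires reasoning microlocally with the spectral projector $Q_{\tau_\lambda^2}$ (defined by functional calculus of the unbounded operator $\sqrt{-1}T$) despite its non-proper support. Once that is in place, the leading-term assertion \eqref{e-gue201118yydr} is immediate: in the formula \eqref{e-gue201115yydx} supplied by Theorem~\ref{t-gue201115yyds}, the factor $\overline{q(x,\omega_0(x))}q(x,\omega_0(x))$ equals $1$, since the principal symbol of $Q_{\tau_\lambda^2}$ equals $\tau_\lambda^2(-\eta_{2n+1})\to 1$ along the ray generated by $\omega_0(x)\in\Sigma^-$. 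Patching the BRT neighbourhoods $D_p$ by a finite subcover of $D$ completes the proof.
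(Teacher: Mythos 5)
Your overall architecture (localize to BRT charts, use $Q_{\tau^2_\lambda}S^{(0)}$ as the main term via Theorems~\ref{t-gue201115ycda} and~\ref{t-gue201118yyd}, control the residual) matches the paper up to the last step, but the residual step is where your argument has a genuine gap. You propose to prove the \emph{linear} smoothing statement $(I-Q_{\tau^2_\lambda})S^{(0)}\equiv 0$ on $D$ by writing $(I-Q_{\tau^2_\lambda})S^{(0)}\equiv (I-Q_{\tau^2_\lambda})\tilde S\, S^{(0)}\tilde S$ and invoking $(I-Q_{\tau^2_\lambda})\tilde S\equiv 0$. The problem is that ``$(I-Q_{\tau^2_\lambda})\tilde S\equiv 0$ on $D$'' only gives continuity $\mathscr E'(D)\to\cC^\infty(D)$, while in your composition the left factor is fed $S^{(0)}\tilde S u$, which is a \emph{global} $L^2$ function, neither compactly supported in $D$ nor controlled near $\partial D$; since $Q_{\tau^2_\lambda}$ is a non-local spectral projector (and, in the non-free case, involves the torus averaging term $\hat R_{\tau_\lambda}$), you cannot simply cut off and discard the part of the input outside a compact set. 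The extra $\tilde S$ on the right does not repair this: it neither compactifies the support of the middle $S^{(0)}$'s output nor gains the regularity needed, and the identity $S^{(0)}\equiv\tilde S S^{(0)}\tilde S$ itself (the right-hand factor requires $S^{(0)}\Box_b^{(0)}A=0$, i.e.\ $Au\in\Dom\Box_b$) is not the one established in Theorem~\ref{t-gue201116yydb}, which eliminates $S^{(0)}$ from the middle precisely by using the local closed range hypothesis with respect to $Q$ --- a hypothesis unavailable for $Q=I$. This is exactly why the paper proves only the \emph{sandwiched} statement $(I-Q_{\tau^2_\lambda})S^{(0)}(I-Q_{\tau^2_\lambda})\equiv0$ (Theorem~\ref{t-gue201118yyda}): there the two one-sided continuities $(I-Q_{\tau^2_\lambda})S^{(0)}\colon L^2(X)\to\cC^\infty(D)$ and $S^{(0)}(I-Q_{\tau^2_\lambda})\colon\mathscr E'(D)\to L^2(X)$ genuinely compose, whereas neither alone yields a smooth kernel.

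The paper's actual conclusion is then a short algebraic trick that avoids the linear statement altogether: expand
\begin{equation*}
(I-Q_{\tau^2_\lambda})^2S^{(0)}=(I-2Q_{\tau^2_\lambda}+Q_{\tau^4_\lambda})S^{(0)}\equiv0,
\end{equation*}
so that $S^{(0)}\equiv 2Q_{\tau^2_\lambda}S^{(0)}-Q_{\tau^4_\lambda}S^{(0)}$ on $D$, and observe that \emph{both} terms on the right are oscillatory integrals of the required form (by running the proofs of Theorems~\ref{t-gue201115ycda} and~\ref{t-gue201118yyd} with $\tau^2_\lambda$ and $\tau^4_\lambda$, each having homogeneous principal symbol $1$ on $\Sigma^-$), with leading coefficients combining as $2\cdot\tfrac12\pi^{-n-1}\abs{\det\cL_x}-\tfrac12\pi^{-n-1}\abs{\det\cL_x}=\tfrac12\pi^{-n-1}\abs{\det\cL_x}$. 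Your computation of the leading symbol via \eqref{e-gue201115yydx} with $q(x,\omega_0(x))=1$ is correct in spirit, but you should either adopt the $2Q_{\tau^2_\lambda}-Q_{\tau^4_\lambda}$ decomposition or supply a genuine proof of the linear smoothing; as written, the residual step does not close.
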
 
 
 \begin{proof}
 Let $\lambda\in\mathbb R$, $\lambda\leq-2C$. Form Theorem~\ref{t-gue201118yyda}, we have 
 \begin{equation}\label{e-gue201118ycdab}
 (I-2Q_{\tau^2_\lambda}+Q_{\tau^4_\lambda})S^{(0)}\equiv0\ \ \mbox{on $D$}. 
 \end{equation}
 We can repeat the proofs of Theorem~\ref{t-gue201115ycda} and Theorem~\ref{t-gue201118yyd} and get 
 \begin{equation}\label{e-gue201118yydab}
 \begin{split}
 &Q_{\tau^2_\lambda}S^{(0)}\equiv\int^\infty_0e^{i\varphi(x,y)t}\hat s(x,y,t)\ \ \mbox{on $D$},\\
  &Q_{\tau^4_\lambda}S^{(0)}\equiv\int^\infty_0e^{i\varphi(x,y)t}\tilde s(x,y,t)\ \ \mbox{on $D$},
 \end{split}
 \end{equation}
 where $\varphi\in\cC^\infty(D\times D)$ is as in \eqref{e-gue201115yydt}, $\hat s(x, y, t), \tilde s(x,y,t)\in S^{n}_{{\rm cl\,}}\big(D\times D\times\mathbb{R}_+\big)$ are as in \eqref{e-gue201115ycdk}. From \eqref{e-gue201118ycdab} and \eqref{e-gue201118yydab}, the theorem follows. 
 \end{proof}

\section{Examples}
We now consider  Heisenberg group $\field{H}=\C^n\times \R$ with  CR structure 
	\be
	T^{1,0}\field{H}:={\rm span\,}\left\{\frac{\dbar}{\dbar z_j}+i\frac{\dbar\phi}{\dbar z_j}(z)\frac{\dbar}{\dbar x_{2n+1}}\right\}_{j=1}^n,
	\ee
	where $\phi\in\cC^\infty(\C^n,\R)$. Let $(\,\cdot\,|\,\cdot\,)_{\field{H}}$ be the $L^2$ inner product on $\field{H}$ induced by the Euclidean measure $dx$ on $\mathbb R^{2n+1}$. Let 
	\[S_{\field{H}}: L^2(\field{H})\rightarrow\{u\in L^2(\field{H});\, (\frac{\dbar}{\dbar\overline z_j}-i\frac{\dbar\phi}{\dbar\overline z_j}(z)\frac{\dbar}{\dbar x_{2n+1}})u=0\}\]
	be the orthogonal projection with respect to $(\,\cdot\,|\,\cdot\,)_{\field{H}}$ and let $S_{\field{H}}(x,y)\in\mathscr D'(\field{H}\times\field{H})$ be the distribution kernel of $S_{\field{H}}$. From Theorem~\ref{thm-main}, we deduce 

\begin{cor}\label{c-gue201205yyd}
With the notations used above, assume that $\left(\frac{\partial^2\phi(z)}{\partial z_j\partial\overline z_k}\right)^n_{j,k=1}$ is positive definite, at every $z\in\mathbb C^n$. Let $0<\lambda_1(z)\leq\ldots\leq\lambda_n(z)$ be the eigenvalues of $\left(\frac{\partial^2\phi(z)}{\partial z_j\partial\overline z_k}\right)^n_{j,k=1}$, for every $z\in\mathbb C^n$. 
Suppose that there is $C>0$ such that 
	\begin{equation}	\label{e-gue201205yyd}      
	\begin{split}
	&\sqrt{-1}\dbar\ddbar\left(-\log\det\left(\frac{\partial^2\phi}{\partial z_j\partial\overline z_k}\right)^n_{j,k=1}\right)\geq -C\sqrt{-1}\dbar\ddbar\phi\ \ ,\\
	&\frac{1}{\lambda_1(z)}\leq C,\ \ \mbox{for every $z\in\mathbb C^n$}.
	\end{split} 
	\end{equation}
Let $D\Subset\field{H}$ be any open set.  	Then, 
	\begin{equation}\label{e-gue201205yydI}
		S_{\field{H}}(x,y)\equiv\int^\infty_0e^{i\varphi(x,y)t}s(x,y,t)dt\ \ \mbox{on $D$},
	\end{equation}
	where $\varphi\in\cC^\infty(D\times D)$ and $s(x, y, t)\in S^{n}_{{\rm cl\,}}\big(D\times D\times\mathbb{R}_+\big)$ are as in Theorem~\ref{thm-main}. 
	\end{cor}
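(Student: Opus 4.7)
The plan is to realize $\field{H}=\C^n\times\R$ as a global BRT chart satisfying Assumption \ref{as1} and the hypotheses of Theorem \ref{thm-main}, so that the projection $S_\field{H}$ appearing in the statement coincides with $S^{(0)}$ of Theorem \ref{thm-main}. First I would fix an $\R$-invariant Hermitian metric $\Theta_\field{H}$ on $\C T\field{H}$ obeying \eqref{e-gue201128yyd} with $T=\dbar/\dbar x_{2n+1}$, normalized so that the induced volume form $dv_\field{H}=(\Theta_\field{H}^n/n!)\wedge\omega_0$ equals the Euclidean measure $dx$; this can be arranged by rescaling the standard frame $\{Z_j\}$ appropriately, and $\R$-invariance is automatic since $\phi$ does not depend on $x_{2n+1}$. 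With this choice, the $L^2$ inner product $(\,\cdot\,|\,\cdot\,)$ used in Theorem \ref{thm-main} agrees with $(\,\cdot\,|\,\cdot\,)_\field{H}$, and the translation $\R$-action on $\field{H}$ is manifestly smooth, transversal and CR.

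Next I would translate the hypotheses \eqref{e-cond} into conditions on $\phi$. Positive definiteness of $(\phi_{j\bar k})$ implies strict pseudoconvexity since $\cL$ is represented by this matrix on $T^{1,0}\field{H}$. A short direct computation gives $\langle Z_j|Z_k\rangle_\cL=2\phi_{j\bar k}$, hence by \eqref{e-201120curc}
\[
\sqrt{-1}R^{K^*_\field{H}}_{\cL}=\sqrt{-1}\,\ddbar_b\dbar_b\log\det(\phi_{j\bar k})=\sqrt{-1}\,\dbar\ddbar\bigl(-\log\det(\phi_{j\bar k})\bigr),
\]
and consequently the first inequality in \eqref{e-gue201205yyd} implies \eqref{e-cond} (first part) after absorbing a harmless factor of $2$ into the constant. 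For the volume comparison, the left-hand side of the second part of \eqref{e-cond} equals a fixed positive multiple of $\det(\phi_{j\bar k})\,dx$, while the right-hand side is a fixed positive multiple of $dx$, so the inequality reduces to a uniform lower bound on $\det(\phi_{j\bar k})\geq\lambda_1(z)^n$, which follows from the second part of \eqref{e-gue201205yyd}.

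It remains to verify completeness of the Levi metric $g_\cL$. The horizontal component of $g_\cL$ on $H\field{H}$ is represented by $2\,\mathrm{Re}(\phi_{j\bar k}\,dz_j\otimes d\bar z_k)$, whose smallest eigenvalue is $\geq 2/C$; the bound $\lambda_1(z)\geq 1/C$ therefore forces $g_\cL$ to dominate a fixed positive multiple of the flat Euclidean metric on $\field{H}$ (the $T$-direction contributes a unit length by construction). Thus any curve leaving every Euclidean compact set has infinite $g_\cL$-length, and $g_\cL$ is complete. Once all these verifications are in place, Theorem \ref{thm-main} applies on any $D\Subset\field{H}$ and yields the asserted expansion \eqref{e-gue201205yydI}.

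The only step I expect to require care is the explicit normalization of $\Theta_\field{H}$ so that it is simultaneously $\R$-invariant, obeys \eqref{e-gue201128yyd}, and produces $dv_\field{H}=dx$; the remainder of the argument is straightforward bookkeeping that matches the conditions of \eqref{e-gue201205yyd} with those of \eqref{e-cond} up to an adjustment of the constant $C$.
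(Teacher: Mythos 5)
Your overall route is the same as the paper's (the paper states Corollary~\ref{c-gue201205yyd} as a direct application of Theorem~\ref{thm-main} without spelling out the verification): view $\field{H}$ as one global BRT chart, choose an $\R$-invariant $\Theta_X$ obeying \eqref{e-gue201128yyd} normalized (e.g.\ $\langle Z_j|Z_k\rangle=\tfrac12\delta_{jk}$) so that $dv_X=dx$ and the projection of Theorem~\ref{thm-main} is the Euclidean one $S_{\field{H}}$, identify $\sqrt{-1}R^{K^*_X}_{\cL}=\sqrt{-1}\dbar\ddbar\bigl(-\log\det(\phi_{j\bar k})\bigr)$, and reduce the volume inequality in \eqref{e-cond} to the uniform bound $\det(\phi_{j\bar k})\ge\lambda_1(z)^n\ge C^{-n}$. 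All of this is correct.

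The one step that fails as written is your completeness argument. You claim that $\lambda_1(z)\ge 1/C$ forces $g_{\cL}\ge c\,g_{\mathrm{Euclid}}$ on $\field{H}$; this is false whenever $d\phi$ is unbounded, and in particular in the model case $\phi=|z|^2$ which the corollary is meant to cover. Indeed, the horizontal vector $Z_j+\ov Z_j=\partial_{x_{2j-1}}-2\,{\rm Im}(\phi_{z_j})\,\partial_{x_{2n+1}}$ has $g_{\cL}$-length $2\sqrt{\phi_{j\bar j}}$, bounded, while its Euclidean length grows like $|\partial\phi/\partial z_j|$ (for $\phi=|z|^2$, like $|x_{2j}|$); the splitting $H\field{H}\oplus\R T$ is not Euclidean-orthogonal, so the ``$T$-direction contributes a unit length'' reasoning does not give domination. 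Completeness of $g_{\cL}$ is nevertheless true, but needs a two-step argument: (i) for $v=u+\ov u+cT$ with $u=\sum_j a_jZ_j$ one has $|v|^2_{\cL}=4\cL(u,\ov u)+c^2\ge\frac{4}{C}|a|^2$, so the projection $(z,x_{2n+1})\mapsto z$ is Lipschitz from $(\field{H},g_{\cL})$ to Euclidean $\C^n$ and any curve of finite $g_{\cL}$-length keeps its $z$-component in a bounded set $K$; (ii) on $K\times\R$ one has $|d\phi|\le M_K$, and since $\omega_0(v)=c$ and $dx_{2n+1}(v)=c-2\,{\rm Im}\sum_j\ov a_j\phi_{\bar z_j}$, we get $|dx_{2n+1}(v)|\le\bigl(1+M_K\sqrt{C}\bigr)|v|_{\cL}$ there, so $x_{2n+1}$ also stays bounded along the curve. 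Hence every finite-length curve stays in a compact set and $g_{\cL}$ is complete. With this repair (a local, not global, comparison with the Euclidean metric), your verification of the hypotheses of Theorem~\ref{thm-main} goes through and the corollary follows as you intend.
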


\begin{exam}   
      
     With the notations used in Corollary~\ref{c-gue201205yyd}, assume that 
     	\be
     	\phi(z)=|z|^2+r(z),
     	\ee
     	with $r(z)\in \cC^\infty_c(\C^n)$ and $\sqrt{-1}\dbar\ddbar(|z|^2+r(z))>0$ on $\C^n$. With this $\phi$, we can check the conditions of Corollary \ref{c-gue201205yyd} fulfilled as follows.
        In fact, in this case, we have 
     	$$\det\left(\frac{\partial^2\phi}{\partial z_j\partial\overline z_k}\right)^n_{j,k=1}=\det\left(\frac{\partial^2(|z|^2+r(z))}{\partial z_j\partial\overline z_k}\right)^n_{j,k=1}=1+F(z)>0$$ with some $F(z)\in \cC^\infty_c(\C^n)$. And we have
     	$$
     	\sqrt{-1}\dbar\ddbar\left(-\log\det\left(\frac{\partial^2\phi}{\partial z_j\partial\overline z_k}\right)^n_{j,k=1}\right)=\sqrt{-1}\dbar\ddbar\left(-\log (1+F(z))\right)\in\Omega_c^{1,1}(\C^n).
     	$$
     Since $r(z)\in \cC^\infty_c(\C^n)$ and $\sqrt{-1}\dbar\ddbar\phi=\sqrt{-1}\dbar\ddbar(|z|^2+r(z))>0$, we have a uniform lower bound for the smallest eigenvalue, i.e.,  $\lambda_1(z)>1/C_1$ for some $C_1>0$. Moreover, we can choose $C_2>0$ sufficiently large such that
     $$
     \sqrt{-1}\dbar\ddbar\left(-\log (1+F(z))\right) +C_2\sqrt{-1}\dbar\ddbar\phi\geq 0,
     $$
     since the first term $\sqrt{-1}\dbar\ddbar\left(-\log (1+F(z))\right)$ is a real $(1,1)$-form with compact support in $\C^n$ and the second term $\sqrt{-1}\dbar\ddbar\phi$ is a real positive $(1,1)$-form with a uniformly positive lower bound for the smallest eigenvalue $\lambda_1(z)>1/C_1$ on $\C^n$. Finally we obtain $C:=\max\{C_1,C_2\}>0$ as desired in $(\ref{e-gue201205yyd})$.

      With this $\phi$, it is easy to see that \eqref{e-gue201205yyd} hold. 
    This example shows that, after small perturbation of the Levi form of Heisenberg group, we still can obtain the Szeg\H{o} kernel expansion via Corollary~\ref{c-gue201205yyd}.  \end{exam}

\begin{exam}
Let $(X,T^{1,0}X)$ be a strictly pseudoconvex, 
CR manifold of dimension $2n+1$, $n\geq 1$, with a discrete, proper, CR action 
$\Gamma$ such that the quotient $X/\Gamma$ is compact. 
Assume $X$ admits a transversal CR $\R$-action on $X$ and let $\Theta_X$ 
be a $\Gamma$-invariant, $\R$-invariant, Hermitian metric on $X$. 
Then the conclusion of Theorem \ref{thm-main} holds. 
In fact, $\Gamma$-covering manifold is complete and we can find 
the desired constant $C$ depending on the fundamental domain 
$U\Subset X$ given by the $\Gamma$-action such that (\ref{e-cond}) fulfilled. 
As a consequence, if we consider the circle bundle case in which $R^L=2\cL$, 
we could obtain the Bergman kernel expansion for covering manifold \cite[6.1.2]{MM}.   
\end{exam} 

\textbf{Acknowledgements.}
Chin-Yu Hsiao was partially supported by Taiwan Ministry of Science and
Technology projects 108-2115-M-001-012-MY5,  109-2923-M-001-010-MY4 and Academia Sinica Career Development Award. George Marinescu partially supported by 
the DFG funded project SFB TRR 191 `Symplectic Structures in Geometry, 
Algebra and Dynamics' (Project-ID 281071066\,--\,TRR 191).

\bibliographystyle{alpha}    

\begin{thebibliography}{XXXXXX}

\bibitem[BS76]{BS76}
		L.~Boutet~de Monvel and J.~Sj\"{o}strand.
		\newblock Sur la singularit\'{e} des noyaux de {B}ergman et de {S}zeg\H{o}.
		\newblock In {\em Journ\'{e}es: \'{E}quations aux {D}\'{e}riv\'{e}es
			{P}artielles de {R}ennes (1975)}, pages 123--164. Ast\'{e}risque, No. 34--35.
		1976.
\bibitem [Dem85]{Dem:85}
Jean-Pierre Demailly.
\newblock{\em Sur l'identit\'e de {B}ochner-{K}odaira-{N}akano en g\'eom\'etrie
hermitienne},
\newblock{Lecture Notes in Math., vol. 1198, pp. 88--97, Springer Verlag, 1985.} 

\bibitem [Dem12]{Dem:12}
Jean-Pierre Demailly.
\newblock{\em Complex analytic and differential geometry}, 2012, published online
at https://www-fourier.ujf-grenoble.fr/{$\sim$}demailly/manuscripts/agbook.pdf.
		
\bibitem[GS94]{GS94}
Alain Grigis and Johannes Sj\"ostrand.
\newblock{\em Microlocal analysis for differential operators. An introduction},
London Mathematical Society
Lecture Note Series, vol. 196, 
Cambridge University Press, Cambridge, 1994.
		
\bibitem[Hsi10]{Hsi:10} 
Chin-Yu Hsiao.
\newblock{Projections in several complex variables},
\newblock {\em M\'{e}m. Soc. Math. Fr. (N.S.)}, (123):131, 2010.

\bibitem[HM14]{HM14} 
Chin-Yu Hsiao and George Marinescu, 
\newblock{\em Asymptotics of spectral function of lower energy 
forms and Bergman kernel of semi-positive and big line bundles},  
Comm. Anal. Geom. 22 (2014), no. 1, 1--108.

	\bibitem[HM16]{HM16}
	Chin-Yu Hsiao, George Marinescu.
	\newblock{\em On the singularities of the Szeg\H{o} projections on lower energy forms.}
	\newblock{J. Differential Geometry 107 (2017) 83-155}.

\bibitem[HS89]{HS89}
Bernard Helffer and Johannes Sj\"ostarnd, 
\newblock{Equation de Schr\"odinger avec champ magn\'etique 
et equation de Harper. Schr\"odinger Operators, H. Holden and A. Jensen (Eds.)},
Sonderborg, 1988; Lecture Notes in Phys., Vol. 345, 
Springer Verlag, Berlin, 1989,
118--197.

\bibitem[HHL18]{HHL18}
	Hendrik Herrmann, Chin-Yu Hsiao, Xiaoshan Li.
 \newblock{\em An explicit formula for Szeg\H{o} kernels on the Heisenberg group.} 
 \newblock{ Acta Math. Sin. (Engl. Ser.) 34 (2018), no. 8, 1225--1247}. 
 
\bibitem[HHL17]{HHL17}
	Hendrik Herrmann, Chin-Yu Hsiao, Xiaoshan Li.
	\newblock{\em Szegő kernels and equivariant embedding theorems for CR manifolds.}
	\newblock{arXiv:1710.04910, to appear in Mathematical Research letters }.
	
	
		\bibitem[HML17]{HML17}
	Chin-Yu Hsiao, George Marinescu and Xiaoshan Li. 
	\newblock{Equivariant Kodaira embedding for CR manifolds with circle action.}
	\newblock{Michigan Math. J., doi:10.1307/mmj/1587628815}. 

\bibitem[KN96]{KN96}
Shoshichi Kobayashi and Katsumi Nomizu.  
Foundations of differential geometry. Vol. I. Reprint of the 1963 original. 
Wiley Classics Library. A Wiley-Interscience Publication. 
John Wiley \& Sons, Inc., New York, 1996. xii+329 pp.

	\bibitem[MM07]{MM}
	Xiaonan Ma and George Marinescu.
	\newblock {\em Holomorphic {M}orse inequalities and {B}ergman kernels}, volume
	254 of {\em Progress in Mathematics}.
	\newblock Birkh\"auser Verlag, Basel, 2007. 
	
\bibitem[MM08]{MM08}
Xiaonan Ma and George Marinescu.
\newblock Generalized {B}ergman kernels on symplectic manifolds.
\newblock {\em Adv. Math.}, 217(4):1756--1815, 2008.
  
\bibitem[MM15]{MM15}
Xiaonan Ma and George Marinescu.
Exponential estimate for the asymptotics of Bergman kernels
Math. Ann. 362 (2015), no. 3-4, 1327-1347 		

\bibitem[Tan75]{Tan75}
Noboru~Tanaka, \emph{A differential geometric study on strictly pseudo-convex
manifolds}, Kinokuniya Book-Store Co. Ltd., Tokyo, 1975, Lectures in
Mathematics, Department of Mathematics, Kyoto University, No. 9.		  			  	
\end{thebibliography}
  
\end{document}